\documentclass[a4paper,11pt]{amsart}

\usepackage{geometry}
\usepackage{amssymb,mathrsfs,amsmath,amsthm,amsbsy}
\usepackage{enumitem}
\usepackage{autobreak}
\allowdisplaybreaks
\usepackage{bm}
\usepackage{amsmath}
\usepackage{tabularx}
\usepackage{tabulary}

\usepackage{graphicx}
\usepackage{color}
\usepackage{booktabs}
\usepackage{hyperref}
\usepackage{cleveref}

\newtheorem{theorem}{Theorem}[section]
\newtheorem{lemma}{Lemma}[section]
\newtheorem{proposition}{Proposition}[section]

\newtheorem{definition}{Definition}[section]
\newtheorem{remark}{Remark}[section]
\newtheorem{example}{Example}[section]

\numberwithin{equation}{section}
\numberwithin{figure}{section}

\begin{document}

\title{Sharp conditions for fully nonlinear equations with $(n-1)$-Hessian matrix: Part I. Monge--Amp\`ere case}

\author{Jingwen Ji}
\address{School of Mathematics and Shing-Tung Yau Center, Southeast University, Nanjing 211189, P. R. China}
\email{jingwen\_ji@seu.edu.cn}

\author{Feida Jiang*}
\address{School of Mathematics and Shing-Tung Yau Center, Southeast University, Nanjing 211189, P. R. China; Shanghai Institute for Mathematics and Interdisciplinary Sciences, Shanghai 200433, P. R. China}
\email{jiangfeida@seu.edu.cn}
\thanks{Corresponding author: Feida Jiang.}

\author{Mengni Li}
\address{School of Mathematics, Southeast University, Nanjing 211189, P. R. China}
\email{lmn@seu.edu.cn}

\subjclass[2020]{Primary 35J60; Secondary 35B08, 35B44, 35J70, 35J75}
\keywords{Keller--Osserman condition; real $(n-1)$ Monge--Amp\`ere equation; $(n-1)$-convex entire subsolution.}

\begin{abstract}
We study positive $(n-1)$-convex entire subsolutions of the normalized real $(n-1)$ Monge--Amp\`ere equation $\operatorname{det}^{\frac1n}\bigl(\Delta uI-D^2u\bigr)=f(u)+g(u)|Du|^q$ in $\mathbb R^n$. Under two sets of structural assumptions on the continuous, non-decreasing coefficients $f$ and $g$, we establish necessary and sufficient conditions for the existence of entire subsolutions when $q<2-\frac1n$. Our results cover the pure-gradient degenerate case $f\equiv0$ with $0<q<1$, the gradient-singular case $q<0$, and the case where $g$ changes sign. 
For $q=2-\frac1n$, we recover the Keller--Osserman condition for the gradient-independent equation, prove nonexistence when $g$ is eventually positive, and obtain an exponentially weighted condition when $g$ has a nonpositive limit at infinity. This is the first of two companion papers and treats the determinant case.
\end{abstract}

\maketitle

\setcounter{tocdepth}{1}
\tableofcontents

\section{Introduction}\label{Section1}

We study $(n-1)$-convex entire subsolutions of the normalized real $(n-1)$ Monge--Amp\`ere equation with an additive gradient-dependent inhomogeneous term
\begin{equation}\label{eq1.1}
\operatorname{det}^{\frac{1}n}\bigl(\Delta uI-D^2u\bigr)=f(u)+g(u)|Du|^q,\quad x\in\mathbb R^n,
\end{equation}
where $n\geq2$, $q\in\mathbb R$, and $Du$, $D^2u$, and $\Delta u$ denote the gradient, Hessian matrix, and Laplacian of $u$, respectively. Here normalized refers to taking the $n$th root, which makes the operator positively homogeneous of degree one.
The functions $f$ and $g$ are continuous and non-decreasing. If $\lambda_1,\ldots,\lambda_n$ are the eigenvalues of $D^2u$, then the eigenvalues of $\Delta uI-D^2u$ are $\sum_{j\ne i}\lambda_j$, $i=1,\ldots,n$. Thus the operator is determined by the $(n-1)$-fold sums of the Hessian eigenvalues. In particular, when $n=2$, $\det \bigl(\Delta uI-D^2u\bigr)=\det D^2u$, and the operator in \eqref{eq1.1} is the normalized classical Monge--Amp\`ere operator.

To work in the elliptic setting, a function $u\in C^2(\Omega)$ is called \emph{$(n-1)$-convex} in a domain $\Omega\subseteq\mathbb R^n$ if
$$
\Delta uI-D^2u\geq0\quad\text{in }\Omega,
$$
and it is called \emph{strictly $(n-1)$-convex} if $\Delta uI-D^2u$ is positive definite. We denote the class of $(n-1)$-convex functions in $\Omega$ by $\Phi^n(\Omega)$.
The operator $\operatorname{det}^{\frac{1}n}\bigl(\Delta uI-D^2u\bigr)$ is degenerate elliptic with respect to $(n-1)$-convex functions.

Our objective is to characterize the existence of positive nontrivial $(n-1)$-convex entire subsolutions by the growth of $f$ and $g$, extending the gradient-independent problem studied in \cite{JJL2025}. The additive gradient term introduces three issues. For $f\equiv0$ and $0<q<1$, constant solutions coexist with nontrivial branches, so we construct an immediately increasing radial solution. For $q<0$, the equation is singular at zero-gradient points, and we use the strong solution class specified below. When $g$ changes sign, positivity of the inhomogeneous term along the constructed radial solution must be proved rather than assumed. The determinant structure provides the radial divergence identity \eqref{Volterra_differential}, which is the starting point for both the local construction and the global derivative estimates.

\subsection{Background and Motivation}

Complex analogues of the $(n-1)$ Monge--Amp\`ere operator arise in extensions of Yau's theorem \cite{Yau1978} to Hermitian geometry.
This operator is closely related to the $(n-1)$-plurisubharmonic functions developed by Harvey and Lawson \cite{HL2011,HL2012} and the form-type Calabi--Yau equations introduced by Fu, Wang, and Wu \cite{FWW2010,FWW2015} in the study of balanced metrics.
Tosatti and Weinkove \cite{TW2017,TW2019} studied the associated equations on compact K\"ahler and Hermitian manifolds; related almost complex and almost Hermitian equations were treated in \cite{CTW2019,CHZ2023}. The problem of prescribing volume forms of Gauduchon metrics \cite{G1984} was resolved by Sz\'ekelyhidi, Tosatti and Weinkove \cite{STW2017}. Fully nonlinear Hermitian equations with gradient terms were also studied by Guan and Nie \cite{GN2023}.

In the real setting, $(n-1)$-convexity corresponds to the case $p=n-1$ of the $p$-plurisubharmonicity studied by Harvey and Lawson \cite{HL2013}.
On Riemannian manifolds, Jiao and Liu \cite{JL2021} established second-order estimates and existence results for Hessian type equations involving $\sigma_k(\Delta u I - D^2 u)$, which includes the real $(n-1)$ Monge--Amp\`ere equation as a special case. Related curvature, Hessian quotient, and sum-Hessian equations were studied in \cite{CJ2022,CTX2021,CTX2023,CDH2023,GLT2025,RW2025,RW2026}. For the real $(n-1)$ Monge--Amp\`ere equation, the literature also includes the Neumann problem \cite{Deng2020}, entire solvability \cite{JJL2025}, boundary blow-up problem \cite{JDJ2025,JJ2026}, and viscosity solutions in exterior domains \cite{Dai2026}.

The growth question is rooted in the work of Keller \cite{K1957} and Osserman \cite{O1957}. For a positive, continuous, non-decreasing function $f$, the semilinear differential inequality $\Delta u\geq f(u)$ admits an entire solution if and only if
\begin{equation}\label{classical_KO}
\int^{+\infty}\left(\int_0^\tau f(t)\,dt\right)^{-\frac{1}{2}}d\tau=+\infty,
\end{equation} which is known as the Keller--Osserman condition.
Here and throughout the paper, an omitted lower limit in an improper integral may be replaced by any positive constant.
The Keller--Osserman theory has been extended to several nonlinear elliptic settings. Naito and Usami \cite{NU1997} studied the existence and nonexistence of positive entire solutions of quasilinear elliptic inequalities of the form
$$\operatorname{div}\bigl(A(\vert{}Du\vert{})Du\bigr)\geq f(u).$$
Filippucci, Pucci and Rigoli \cite{FPR2010} obtained existence and nonexistence results for nonnegative entire weak solutions of weighted $p$-Laplacian inequalities with gradient terms, using generalized Keller--Osserman conditions.
On weighted Riemannian manifolds, Mari, Rigoli and Setti \cite{MRS2010} established generalized Keller--Osserman conditions for inequalities of the forms $$\mathcal Lu\geq b(x)f(u)\ell(|\nabla u|)$$ and $$\mathcal Lu\geq b(x)f(u)\ell(|\nabla u|)-g(u)h(|\nabla u|),$$
where $\mathcal L$ includes the $p$-Laplacian and the mean curvature operator.
For fully nonlinear equations, Ji and Bao \cite{JB2010} established generalized Keller--Osserman conditions for standard $k$-Hessian inequalities
$$
S_k^{\frac{1}k}(D^2u)\geq f(u).
$$
Bao, Ji and Li \cite{BJL2012} established existence and nonexistence results for positive entire subsolutions of fully nonlinear equations related to the $k$-Yamabe problem. Bao and Feng \cite{BF2022} studied $p$-$k$-Hessian inequalities of the form
$$S_k[D(\vert{}Du\vert{}^{p-2}Du)] \geq f(u).$$
Hessian type equations with gradient-dependent inhomogeneous terms were also investigated in \cite{JJL2023}.
In the viscosity setting, Capuzzo Dolcetta, Leoni and Vitolo \cite{CDLV2014,C2016} studied entire subsolutions of fully nonlinear degenerate elliptic inequalities
$$F(x,D^2u)\geq f(u)+g(u)|Du|^q.$$
The model operators for $F$ include the degenerate maximal Pucci operator $\mathcal M_{0,1}^+(X)= \sum_{\mu_i(X)>0}\mu_i(X)$ and the $k$-partial trace operator $\mathcal P_k^+(X)= \mu_{n-k+1}(X)+\ldots+\mu_n(X)$, where $\mu_1(X)\leq\ldots\leq\mu_n(X)$ are the eigenvalues of $X$.
Bhattacharya and Mohammed \cite{BM2021} studied standard $k$-Hessian equations with zero- and first-order terms on unbounded domains and obtained Phragm\'en--Lindel\"of and Liouville type results under Keller--Osserman type assumptions.
Further extensions of Keller--Osserman type conditions for entire subsolutions of Monge--Amp\`ere and Hessian type equations can be found in \cite{Dai2020,DL2020,YZ2023,ZY2023}.
For the real $(n-1)$ Monge--Amp\`ere operator, our earlier work \cite{JJL2025} established sharp entire solvability conditions for the gradient-independent equation
$$
\operatorname{det}^{\frac{1}n}\bigl(\Delta uI-D^2u\bigr)=b(x)f(u),
$$ and conditions on $b$ for the existence of entire large or bounded solutions.

Classical Keller--Osserman theory relates the existence of entire subsolutions to the growth of the zero-order nonlinearity. Motivated by the preceding results, we investigate generalized Keller--Osserman conditions for the real $(n-1)$ Monge--Amp\`ere equation \eqref{eq1.1}, taking into account the additive interaction between $f(u)$ and $g(u)|Du|^q$. This interaction cannot be described by the growth of $f$ alone. Indeed, a positive gradient coefficient may force finite-radius blow-up, and a negative coefficient may compensate for a rapidly increasing zero-order term. We consider two structural conditions, \textup{(A)} and \textup{(B)}, stated below. Under condition \textup{(A)}, the coefficients are nonnegative, and we treat $q\leq2-\frac1n$, including the singular case $q<0$ and the pure-gradient case $f\equiv0$, $0<q<1$. Under condition \textup{(B)}, $g$ may change sign, $f(t)>0$ for $t>0$, and we consider $0<q\leq2-\frac1n$, with the resulting existence conditions depending on the asymptotic sign of $g$. We establish existence and nonexistence results for classical entire subsolutions in the nonsingular cases and strong entire subsolutions in the singular case.

\subsection{Main Results}\label{SectionMainResults}

Before stating the main theorems, we specify the notions of entire subsolutions used below.

\begin{definition}[Admissible entire subsolutions]\label{def:entire_(n-1)-convex_subsolutions}
An $(n-1)$-convex entire subsolution of \eqref{eq1.1} is understood in one of the following senses.
\begin{enumerate}[label=(\roman*)]
\item A function $u\in C^2(\mathbb R^n)$ is called an {\bfseries\boldmath$(n-1)$-convex classical entire subsolution} of \eqref{eq1.1} if $\Delta uI-D^2u\geq0$ in $\mathbb R^n$ and
$$
\operatorname{det}^{\frac{1}n}(\Delta uI-D^2u)\geq f(u)+g(u)|Du|^q,\quad x\in\mathbb R^n.
$$

\item Let $q<0$ and $\delta\geq1$. A function $u\in C^1(\mathbb R^n)\cap C^2(\mathbb R^n\setminus\{0\})\cap W_{\mathrm{loc}}^{2,\delta}(\mathbb R^n)$ is called an {\bfseries\boldmath$(n-1)$-convex strong entire subsolution} with an isolated singularity at the origin if $\Delta uI-D^2u\geq0$ in $\mathbb R^n\setminus\{0\}$, $|Du|>0$ a.e. in $\mathbb R^n\setminus\{0\}$, and
$$
\operatorname{det}^{\frac{1}n}(\Delta uI-D^2u)\geq f(u)+g(u)|Du|^q\quad\text{for a.e. }x\in\mathbb R^n\setminus\{0\}.
$$
\end{enumerate}
\end{definition}

At $q=0$, we use the convention $|Du|^0:=1$, including at $Du=0$. By Definition \ref{def:entire_(n-1)-convex_subsolutions}, an $(n-1)$-convex strong entire subsolution belongs to $C^1(\mathbb R^n)\cap C^2(\mathbb R^n\setminus\{0\})$. We do not repeat these requirements in the main theorems and state only the relevant Sobolev regularity and exponent range explicitly.

For $\delta\geq1$, we use the conventions
$$
W_{\mathrm{loc}}^{2,\delta}[0,+\infty):=\left\{v:[0,+\infty)\to\mathbb R:v\in W^{2,\delta}(0,R)\ \text{for any }R>0\right\}
$$
and
$$
W_{\mathrm{loc}}^{2,\delta}(\mathbb R^n):=\left\{u:\mathbb R^n\to\mathbb R:u\in W^{2,\delta}(B_R)\ \text{for any }R>0\right\}.
$$

We now present the main theorems, which give sharp Keller--Osserman conditions for the existence of $(n-1)$-convex entire subsolutions in $\mathbb R^n$.

We first impose the following nonnegative structural condition:
\begin{enumerate}
\item[\textbf{(A)}] {\bfseries\boldmath $f,g\in C(\mathbb R,[0,+\infty))$ are non-decreasing and satisfy $f(t)+g(t)>0$ for all $t>0$.}
\end{enumerate}
Condition (A) includes the gradient-independent equation when $g\equiv0$ or $q=0$, as well as the pure-gradient equation when $f\equiv0$ and $q\neq0$. If $f\equiv0$ and $q>0$, every positive constant is a solution. Our degenerate alternative is restricted to $0<q<1$, and its existence proof selects an immediately increasing nontrivial radial branch. For $q<0$, we use the common Sobolev class specified below.

Our first main theorem gives a sharp Keller--Osserman characterization under condition (A).

\begin{theorem}[Entire subsolutions under condition (A)]\label{Th1.1}
Assume that $f$ and $g$ satisfy condition (A) and let $q\in\mathbb R$. If $q>0$, assume additionally that either $f(t)>0$ for all $t>0$, or that $0<q<1$ and $f\equiv0$.
\begin{enumerate}[label=(\roman*)]
\item Let $0\leq q<2-\frac{1}n$. Equation \eqref{eq1.1} admits a positive nontrivial $(n-1)$-convex classical entire subsolution if and only if
\begin{equation}\label{KO_n}
\int^{+\infty}\frac{dt}{\left(\int_0^t f^{\frac{n}{n-1}}(s)ds\right)^{\frac{n-1}{2n-1}}+\left(\int_0^t g^{\frac{n}{n-1}}(s)ds\right)^{\frac{n-1}{2n-1-qn}}}=+\infty.\tag{$KO_n$}
\end{equation}
\stepcounter{equation}

\item Let $q<0$. Equation \eqref{eq1.1} admits a positive nontrivial $(n-1)$-convex strong entire subsolution
$u\in W_{\mathrm{loc}}^{2,\delta}(\mathbb R^n)$ for $\delta\in [1,\frac{n(1-q)}{1-nq})$
if and only if \eqref{KO_n} holds.

\item Let $q=2-\frac{1}n$. If $g\equiv0$ on $(0,+\infty)$, then equation \eqref{eq1.1} admits a positive nontrivial $(n-1)$-convex classical entire subsolution if and only if
\begin{equation}\label{KO_n_critical_zero}
\int^{+\infty}\frac{dt}{\left(\int_0^t f^{\frac{n}{n-1}}(s)ds\right)^{\frac{n-1}{2n-1}}}=+\infty.\tag{$KO_n^{(0)}$}
\end{equation}\stepcounter{equation} If $g\not\equiv0$ on $(0,+\infty)$, then equation \eqref{eq1.1} admits no positive nontrivial $(n-1)$-convex entire subsolution.
\end{enumerate}
\end{theorem}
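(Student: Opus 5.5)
We will reduce the problem to radial functions and then run a Keller--Osserman comparison argument in one variable. For a radial function $u(x)=v(r)$, $r=|x|$, the eigenvalues of $D^2u$ are $v''$ (simple) and $v'/r$ (multiplicity $n-1$). So the eigenvalues of $\Delta uI-D^2u$ are $(n-1)v'/r$ (simple) and $v''+(n-2)v'/r$ (multiplicity $n-1$). Hence
$$
\det\bigl(\Delta uI-D^2u\bigr)=\frac{n-1}{r}\,v'\Bigl(v''+\frac{n-2}{r}v'\Bigr)^{n-1}=\frac{n-1}{n}\,r^{-(n-1)^2}\Bigl(r^{n-2}v'\Bigr)^{n-1}\frac{d}{dr}\Bigl(r^{n-2}v'\Bigr)\cdot\frac{n}{r^{n-1}\cdot r^{\cdots}}.
$$
More cleanly, the left side equals $c_n r^{-\alpha}\bigl((r^{n-2}v')^n\bigr)'$ for suitable $c_n,\alpha$; this is the divergence identity \eqref{Volterra_differential}. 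The radial equation therefore becomes
$$
\bigl((r^{n-2}v')^n\bigr)'=C\,r^{\beta}\bigl(f(v)+g(v)|v'|^q\bigr)^n .
$$

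\textbf{Existence.} We first solve the Volterra problem $v(0)=a>0$, $v'(0)=0$ by a fixed point argument on $[0,\rho]$.
\begin{itemize}
\item When $f(a)>0$ this is routine.
\item When $f\equiv0$ and $0<q<1$, we look for an immediately increasing branch with $v'\sim c r^{\gamma}$ near $0$, where $\gamma$ is chosen so that the powers of $r$ balance. The sublinearity $q<1$ gives a non-Lipschitz right-hand side that admits a nontrivial solution. We construct it by monotone iteration starting from the explicit power profile.
\item When $q<0$, the same power ansatz gives $v'\sim r^{1/(1-nq)\cdot\ldots}$ near $0$. Computing $v''\sim r^{\gamma-1}$, we check that $v''\in L^\delta_{\rm loc}$ exactly when $\delta<\frac{n(1-q)}{1-nq}$; this produces the stated range of $\delta$.
\end{itemize}
Monotonicity then gives $v'>0$ for $r>0$ and $(n-1)$-convexity, so the local solution extends as long as it stays bounded.

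The key step is to show that $v$ does not blow up in finite $r$ under \eqref{KO_n}. From the ODE, $v''\lesssim (f(v)+g(v)v'^q)^{n/(n-1)}v'^{-1/(n-1)}\cdot$(bounded factors). We multiply by $v'$ and integrate. Using $(a+b)^{n/(n-1)}\le C(a^{n/(n-1)}+b^{n/(n-1)})$, we arrive at an inequality of the type
$$
(v')^{\frac{2n-1}{n-1}}\lesssim F(v)+ (v')^{\frac{qn}{n-1}}G(v),\qquad F=\int_0^v f^{\frac n{n-1}},\quad G=\int_0^v g^{\frac n{n-1}} .
$$
Since $q<2-\frac1n$ means $\frac{qn}{n-1}<\frac{2n-1}{n-1}$, Young's inequality absorbs the gradient term and yields
$$
v'\lesssim F(v)^{\frac{n-1}{2n-1}}+G(v)^{\frac{n-1}{2n-1-qn}}.
$$
Integrating $dv$ divided by the right-hand side and using \eqref{KO_n} shows that $v$ stays finite for all $r$. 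This gives the entire solution.

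\textbf{Nonexistence.} Here we expect the main obstacle. Given an $(n-1)$-convex subsolution $u$, we pass to a radial comparison function. We take a radial subsolution (for instance, the spherical maximum, or use the comparison principle on balls against the solution of the radial boundary blow-up problem). This requires comparison for $\det^{1/n}$ with gradient terms, which works since $f,g$ are non-decreasing. For the radial function we derive the reverse bound
$$
v'\gtrsim F^{\frac{n-1}{2n-1}}+G^{\frac{n-1}{2n-1-qn}}\quad\text{for large } r .
$$
This comes from integrating the ODE from $r/2$ to $r$ and using monotonicity of $v'$ and $v$. If \eqref{KO_n} fails, $\int dv/(\ldots)<\infty$ then forces blow-up at finite radius, a contradiction. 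For the strong solutions in case (ii), the same argument runs in $\mathbb R^n\setminus\{0\}$ with a.e. inequalities.

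\textbf{The critical case $q=2-\frac1n$.} When $g\equiv0$, case (iii) is the gradient-free theorem of \cite{JJL2025}. When $g\not\equiv0$, we have $g\ge g(t_0)>0$ for $t\ge t_0$ once $v>t_0$. Then the ODE gives
$$
\bigl((r^{n-2}v')^n\bigr)'\gtrsim r^\beta v'^{\,2n-1},
$$
a superlinear Riccati-type inequality in $w=v'$: $w'\gtrsim w^{2}\cdot$(powers of $r$). This forces finite-$r$ blow-up of $v'$, so no entire subsolution exists. Getting $v$ to exceed $t_0$ needs $v\to\infty$; this follows from the subsolution inequality being positive somewhere together with $f+g>0$.
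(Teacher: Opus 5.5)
Your overall architecture matches the paper's: a radial Cauchy problem, two-sided derivative estimates, a finite-radius blow-up radial barrier for necessity, and Riccati blow-up at $q=2-\frac1n$. The gap is in the necessity step. It rests on a comparison principle that you justify only by ``$f,g$ are non-decreasing'', and in that generality the principle is false.

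In the degenerate case $f\equiv0$, $0<q<1$, every positive constant $u\equiv c$ is a subsolution. If \eqref{KO_n} fails, the immediately increasing radial solution $v$ with $v(0)=c/2$ blows up at a finite $R$, and $u\le v$ on $B_R$ would give $c\le c/2$. Your argument never uses that $u$ is nonconstant, so it cannot be correct as stated.

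The underlying issue is that the contact argument alone gives no contradiction. At a positive interior maximum of $u-v$, ellipticity gives $\mathcal L[u]\le\mathcal L[v]$, and non-strict monotonicity of $f,g$ gives only the reverse inequality. You therefore need a strong maximum principle for the linearized operator. That requires two things near the contact point:
\begin{itemize}
\item strict $(n-1)$-convexity, which is available because $\mathcal L[u]\ge\mathcal L[v]>0$ there;
\item bounded first-order coefficients $q\int_0^1|Du_\theta|^{q-2}Du_\theta\,d\theta$, which for $q<1$ needs $Du=Dv\neq0$.
\end{itemize}
Both fail at the origin. There $Dv(0)=0$, $|p|^q$ is non-Lipschitz ($0<q<1$) or singular ($q<0$), and a strong subsolution is only $C^1$. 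In the degenerate case also $\mathcal L[v](0)=0$. The standard $\varepsilon|x|^2$ perturbation does not rescue this: it gains $O(\varepsilon)$ in $\mathcal L$ but changes $g(u)|Du|^q$ by $O(\varepsilon^q)$.

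Lemma \ref{Lem5.1} handles this in three steps:
\begin{itemize}
\item In the degenerate case, $u$ is first translated so that $Du(0)\neq0$; this is where nontriviality enters.
\item Linearization is used only at contact points $x_0\neq0$, followed by an open--closed argument in $B_R\setminus\{0\}$.
\item If the maximum is attained only at $0$, the barrier is re-centered at a nearby $\xi$ with $Du(\xi)\neq0$, so contact at $0$ or at $\xi$ is excluded by gradient mismatch.
\end{itemize}
Your spherical-maximum alternative does not bypass this. The spherical maximum is only a viscosity subsolution, and a radial subsolution need not be convex. So your $r/2$-to-$r$ lower estimate, which uses monotonicity of $v'$, does not apply to it.

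Several smaller points also need repair.
\begin{itemize}
\item The divergence identity is misstated: $((r^{n-2}v')^n)'$ is not proportional to $\det(\Delta uI-D^2u)$. The correct form is $\bigl((r^{n-2}v')^{\frac n{n-1}}\bigr)'=n(n-1)^{-\frac n{n-1}}r^{n-1}h^{\frac n{n-1}}$, which your later inequalities implicitly use.
\item For $q<0$ your upper estimate fails as written. The factor $(v')^{\frac{qn}{n-1}}$ is decreasing, so it cannot be pulled out at $r$, and its exponent is negative, so nothing is absorbed. Multiply by $(v')^{1-\frac{qn}{n-1}}$ instead and apply Young's inequality to the $f$-term.
\item At $q=2-\frac1n$ the barrier must satisfy $v\to+\infty$, not merely $v'\to+\infty$, at the finite radius. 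Young absorption is unavailable there, so you need the Gronwall-type exponential bound \eqref{1prime_n_upper} to show that bounded $v$ forces bounded $v'$.
\item The convexity $v''\ge0$ used in both derivative estimates needs its own argument (Lemma \ref{Lem2.3}(ii)); $(n-1)$-convexity is not enough.
\item For $q<0$ with $g(a)=0$ the power ansatz is unavailable, and the local construction needs a built-in linear lower slope bound.
\item The value $\frac{n(1-q)}{1-nq}$ is not the exact Sobolev threshold. It is a common range coming from $(\varphi'')^{n-1}\lesssim r(\varphi')^{nq-1}$.
\end{itemize}
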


Throughout the paper, nontrivial means nonconstant. The nonconstancy requirement in Theorem \ref{Th1.1}~(i) is essential in the pure-gradient case $0<q<1$ and $f\equiv0$, since every positive constant is then a solution independently of \eqref{KO_n}. In this case, \eqref{KO_n} characterizes the existence of an entire subsolution, equivalently the existence of an increasing convex global radial solution constructed in Section \ref{Section3}.

When $g\equiv0$ on $(0,+\infty)$, \eqref{KO_n} reduces to \eqref{KO_n_critical_zero}, which is the Keller–Osserman condition for the gradient‑independent equation obtained in \cite{JJL2025}.
When $g\not\equiv0$ on $(0,+\infty)$, Theorem \ref{Th1.1} excludes positive nontrivial $(n-1)$-convex entire subsolutions for $1<q\leq2-\frac{1}n$ under its stated hypotheses; see Remark \ref{Rem4.critical_exponents}.

We next allow $g$ to change sign. For $q>0$, consider the following alternative condition:
\begin{enumerate}
\item[\textbf{(B)}] {\bfseries\boldmath $f\in C(\mathbb R,[0,+\infty))$ and $g\in C(\mathbb R,\mathbb R)$ are non-decreasing and satisfy $f(t)>0$ for all $t>0$.}
\end{enumerate}
Set $g^+(t):=\max\{g(t),0\}$ and $g^-(t):=\max\{-g(t),0\}$.
For $0<q\leq 2-\frac{1}n$, define
\begin{equation}\label{Pn}
P_n(t):=\frac{(g^-(t))^{\frac{2n-1}{q(n-1)}}}{(f(t))^{\frac{2n-1-qn}{q(n-1)}}},\quad t>0.
\end{equation}
The function $P_n$ is well defined on $(0,+\infty)$ under condition (B).

Under condition (B), the behavior of the gradient term is determined by the asymptotic sign of $g$. By monotonicity, $g$ admits an extended‑real limit at infinity. If this limit is positive, the growth is governed by the positive part $g^+$; if it is nonpositive, the modified Keller--Osserman condition involves the weight $P_n$ generated by $g^-$. We now state our second main result.

\begin{theorem}[Entire subsolutions under condition (B)]\label{Th1.2}
Assume that $f$ and $g$ satisfy condition (B), and $q>0$.
\begin{enumerate}[label=(\roman*)]
\item Let $0<q<2-\frac{1}n$ and $\lim\limits_{t\to+\infty}g(t)>0$. Equation \eqref{eq1.1} admits a positive nontrivial $(n-1)$-convex classical entire subsolution if and only if
\begin{equation}\label{KO_n_2}
\int^{+\infty}\frac{dt}{\left(\int_0^t f^{\frac{n}{n-1}}(s)ds\right)^{\frac{n-1}{2n-1}}+\left(\int_0^t(g^+(s))^{\frac{n}{n-1}}ds\right)^{\frac{n-1}{2n-1-qn}}}=+\infty.\tag{$KO_n^{(g^+)}$}
\end{equation}
\stepcounter{equation}

\item Let $0<q\leq 2-\frac{1}n$ and $\lim\limits_{t\to+\infty}g(t)\leq0$. Equation \eqref{eq1.1} admits a positive nontrivial $(n-1)$-convex classical entire subsolution if and only if
\begin{equation}\label{KO_n_3}
\int^{+\infty}\frac{dt}{\left(\int_0^t f^{\frac{n}{n-1}}(s)e^{-\int_s^tP_n(\tau)d\tau}ds\right)^{\frac{n-1}{2n-1}}}=+\infty.\tag{$KO_n^{(g^-)}$}
\end{equation}
\stepcounter{equation}

\item Let $q=2-\frac{1}n$ and $\lim\limits_{t\to+\infty}g(t)>0$. Then equation \eqref{eq1.1} admits no positive nontrivial $(n-1)$-convex classical entire subsolution.
\end{enumerate}
\end{theorem}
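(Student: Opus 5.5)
The plan is to reduce Theorem \ref{Th1.2} to radial ODE analysis, using the same scheme as for Theorem \ref{Th1.1}. For radial $u=u(r)$, the eigenvalues of $\Delta uI-D^2u$ are $\Delta u-u''=\frac{(n-1)u'}{r}$, with multiplicity one, and $\Delta u-\frac{u'}{r}=u''+\frac{(n-2)u'}{r}$, with multiplicity $n-1$. Hence
$$\det\bigl(\Delta uI-D^2u\bigr)=\frac{n-1}{r^{n-1}}\Bigl(r^{n-2}u'\Bigr)^{n-2}\frac{(r^{n-2}u')'}{1}\cdot\frac{1}{n-1}\cdots,$$
which is a divergence of $(r^{n-2}u')^{n-1}$ up to constants (the identity \eqref{Volterra_differential}). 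The radial problem therefore becomes
$$\bigl((r^{n-2}u')^{n-1}\bigr)'=c_n r^{n^2-2n}\bigl(f(u)+g(u)|u'|^q\bigr)^n,\qquad u(0)=a>0,\ u'(0)=0.$$

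\textbf{Existence.} For existence, I solve this Volterra problem locally and continue it as long as $u$ stays finite. The key point under (B) is that $f(u)+g(u)|u'|^q>0$ along the solution. Near $r=0$ this holds because $u'$ is small and $f(a)>0$. Globally I argue by contradiction at the first radius $r_0$ where it vanishes: before $r_0$ the function $u'$ is increasing, and monotonicity of $f$, $g$ then forces the expression to stay positive. I expect to derive this from the first-order relation $f(u)\ge g^-(u)(u')^q$, used as a barrier along with $u'\le$ the natural bound.

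I then need an a priori estimate that rules out finite-radius blow-up. Multiplying by $u'$ and integrating as in the classical Keller--Osserman argument gives $(u')^{\frac{2n-1}{n-1}}\lesssim \int_a^u (f+g^+(u')^q)^{\frac n{n-1}}$. In case (i), where $g^+$ dominates eventually and $g^-$ is bounded, this reduces to the estimate already used for \eqref{KO_n}, now with $g^+$, and $g^-$ only helps. So the proof of Theorem \ref{Th1.1}(i) applies verbatim after replacing $g$ by $g^+$ for existence. In case (ii), I set $w=(u')^{\frac{2n-1}{n-1}}$ as a function of $t=u$. The equation becomes a linear differential inequality $w'(t)+c\,g^-(t)w^{\frac{q(n-1)}{2n-1}}\lesssim f^{\frac n{n-1}}(t)$. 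By Young's inequality, $g^-w^{\theta}\ge \epsilon w\,P_n-C f^{\frac n{n-1}}$; the exponents are exactly those defining $P_n$ in \eqref{Pn}. An integrating factor then gives $w(t)\lesssim\int_0^t f^{\frac n{n-1}}e^{-\int_s^tP_n}$. Integrating $dt/w^{\frac{n-1}{2n-1}}=dr$ shows $R=\infty$ under \eqref{KO_n_3}. Finally, $u$ increasing and convex gives $(n-1)$-convexity.

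\textbf{Nonexistence.} For nonexistence, I take any entire subsolution $v$ and compare it with the radial solution with $u(0)<\min_{B_1}v$. Via the comparison principle for $\det^{1/n}$ on balls, where the right side is increasing in $u$, this shows $u\le v$ on any ball where $u$ is finite. So $u$ must be entire, and it suffices to show the radial solution blows up when the integral converges. This requires the reverse inequality for $w$. In case (ii), the reverse Young bound $g^-w^\theta\le C(wP_n+f^{\frac n{n-1}})$ holds with a different constant, and the exponential weight changes only by a constant power, which preserves divergence or convergence since $\int P_n$ is shifted by a constant factor. I expect this constant-factor invariance of \eqref{KO_n_3} to be the main obstacle, and I would try to prove the two-sided bound with the same exponent by a sharper pointwise comparison of the ODE for $w$.

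\textbf{Critical case.} For (iii), $q=2-\frac1n$ makes the $g^+$ term homogeneous of degree one in $w$: $w'\gtrsim g^+(t)w$ for large $t$. So $w$ grows exponentially in $t$, $\int dt/w^{\frac{n-1}{2n-1}}<\infty$, and blow-up occurs at finite $r$. The comparison argument above then rules out entire subsolutions.
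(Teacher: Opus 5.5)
Your architecture matches the paper's: radial reduction, positivity of $h=f(u)+g(u)(u')^q$, two-sided derivative estimates, and comparison with radial blow-up solutions. Your exponential-growth argument for (iii) is essentially a valid variant of the paper's Riccati inequality. However, the comparison step on which all three nonexistence directions rest is reversed and, as stated, false. Comparison places a subsolution below a solution that dominates it on the boundary. It does not give ``radial solution $\le$ subsolution wherever the radial solution is finite.'' For instance, when global radial solutions exist, $v(x)=\varphi_b(|x-x_0|)$ with $b>a$ and $|x_0|$ large is an entire solution with $v\ge b>\varphi_a(0)$, yet $v(x_0)=b<\varphi_a(|x_0|)$. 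The correct argument starts from the subsolution $u$ and takes $a<u(0)$ (the paper uses $a=u(0)/2$). If the radial solution $\varphi$ with $\varphi(0)=a$ blew up at some $R<+\infty$, then $u\le\varphi(|x|)$ in $B_R$ because $\varphi\to+\infty$ at $\partial B_R$, so $u(0)\le a$, a contradiction. Nor can this comparison be quoted off the shelf. The operator is only degenerate elliptic on $(n-1)$-convex functions, and $p\mapsto|p|^q$ is not Lipschitz at $p=0$ for $0<q<1$, while the barrier has zero gradient at its center. Lemma~\ref{Lem5.1} handles this in four steps: both functions are strictly $(n-1)$-convex at a contact point where $u>v$; one linearizes there; the strong maximum principle is applied in an open--closed argument; and the barrier is translated to $\varphi(|x-\xi|)$ so that contact never occurs at a zero gradient.

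The ODE part has gaps as well. Positivity of $h$ does not follow from monotonicity. Near a first zero $r_0$, monotonicity only gives $0<h(r)\le -g(u(r))\bigl((u'(r_0))^q-(u'(r))^q\bigr)$, which is compatible with $h(r_0)=0$. Your barrier $f(u)\ge g^-(u)(u')^q$ is just $h\ge0$ restated. The paper works with the truncated equation \eqref{mVolterra} and proves $\varphi''\ge0$ by comparing two radii with equal slope. Then (Lemma~\ref{Lem2.3}), for $n\ge3$ it reads off $h>0$ from the positivity of both factors in \eqref{modified_sODE}. For $n=2$ it uses $\varphi''=rh^2/\varphi'$ to get $h(r)\le C\int_r^{r_0}h^2$, which is impossible as $r\to r_0^-$.

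In case (ii) the coefficient of $P_n$ is not a constant. The ODE carries a factor $r^{\frac1{n-1}}$, so the necessity bound \eqref{1prime_n_lower_uni} has weight $e^{-\Lambda_3r^{\frac1{n-1}}\int_t^{\varphi(r)}P_n(\tau)d\tau}$ with $r\to\infty$. Qualitative independence of \eqref{KO_n_3} from the constant in the exponent does not suffice, and matching the exponents of your two bounds would not remove this $r$-dependence. What is needed is the quantitative estimate \eqref{weighted_parameter_comparison}, $J^-_{\mu,b}\le\frac\lambda\mu J^-_{\lambda,b}$ for $0<\mu<\lambda$. It holds because $f$ is non-decreasing and $P_n$ is non-increasing (substitute $s=t-\frac\mu\lambda(t-r)$). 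With $\lambda=\Lambda_3r^{\frac1{n-1}}$, the prefactor $r^{\frac1{n-1}}$ then absorbs the loss $\lambda/\mu$.

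In that lower bound you also cannot discard the term $\frac\alpha r\psi$, where $\psi=(u')^{\frac{2n-1}{n-1}}$ and $\alpha=\frac{(n-2)(2n-1)}{n-1}$. Your $t$-formulation omits it. The paper instead integrates $(r^\alpha\psi)'$ over $[\frac r2,r]$ and uses convexity to pass from $\int_{\varphi(r/2)}^{\varphi(r)}$ to $\int_a^{\varphi(r)}$. In the upper bound, the inequality $f^{\frac1{n-1}}g^-(u')^q\ge P_n\psi$ comes from the slope bound \eqref{iii}, not from Young's inequality. Young's inequality runs the other way and is what the lower bound uses.

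Finally, your displayed divergence form is incorrect. The exact identity is $\bigl((r^{n-2}u')^{\frac n{n-1}}\bigr)'=n(n-1)^{-\frac n{n-1}}r^{n-1}h^{\frac n{n-1}}$, as in \eqref{Volterra_differential}. Since your later exponents agree with it, this is only a slip.
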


\begin{remark}\label{Rem1.KO-lower-limit}
The omitted lower limits in the outer improper integrals \eqref{KO_n}, \eqref{KO_n_critical_zero}, \eqref{KO_n_2}, and \eqref{KO_n_3} may be replaced by arbitrary positive constants without affecting convergence or divergence at infinity.
Under condition (B), the coefficient $1$ in the exponential weight of \eqref{KO_n_3} is a normalization. By Lemma \ref{Lem2.4}, replacing $e^{-\int_s^tP_n(\tau)d\tau}$ by $e^{-\lambda\int_s^tP_n(\tau)d\tau}$ for any $\lambda>0$ leaves the divergence condition unchanged.
\end{remark}

\begin{remark}\label{Rem1.positivity}
If $f(t)>0$ for all $t\in\mathbb R$, the positivity restriction on the subsolution may be removed. Under the modified conditions $(A)'$ and $(B)'$ in Remark \ref{Rem5.3}, the same Keller--Osserman conditions characterize nontrivial $(n-1)$-convex entire subsolutions taking arbitrary real values, in the corresponding ranges of $q$.
\end{remark}

\subsection{Main Features and Organization}

The main features of this paper are summarized as follows.

\begin{enumerate}[label=(\roman*)]
\item We establish necessary and sufficient conditions for positive nontrivial $(n-1)$-convex entire subsolutions when $q<2-\frac{1}n$. These include the pure-gradient degenerate case $f\equiv0$ with $0<q<1$, strong entire subsolutions in the gradient-singular case $q<0$ under condition (A), and classical entire subsolutions for sign-changing gradient coefficients under condition (B).

\item The divergence identity \eqref{Volterra_differential} yields an integral formulation of the radial problem involving both the unknown function and its derivative.
Adapting the Euler method in \cite{JB2010,JJL2025,JTX2026}, we obtain uniform convergence of the approximations and $L^\infty$ convergence of their piecewise derivatives. In the singular case, positive lower slope bounds ensure that the limit solves the original equation with its negative gradient power. In the pure-gradient degenerate case, a divergence-form transformation allows us to apply \cite[Theorem 1.1]{JTX2026} to construct an immediately increasing solution.

\item The dimension-dependent exponent $q=2-\frac{1}n$ is the structural threshold at which the radial estimates change from power-type to exponential form. At this value, an eventually positive gradient coefficient prevents entire subsolutions, and the gradient-independent case $g\equiv0$ and the case $\lim\limits_{t\to+\infty}g(t)\leq0$ give distinct Keller--Osserman conditions. Under eventual positivity of $g$, nonexistence holds for $1<q<2-\frac{1}n$, as explained in Remark \ref{Rem4.critical_exponents}. Our conditions reduce to those in \cite{JJL2025} when $g\equiv0$ or $q=0$.

\item We establish an equivalence between entire subsolution existence and global radial solvability. For $q<0$ or $0<q<1$, direct gradient linearization is unavailable at zero-gradient contact points. By translation, the comparison is transferred to points where both functions are $C^2$ and the gradients nonzero. Local linearization in the strictly $(n-1)$-convex cone, the strong maximum principle, and an open-and-closed argument then yield the required comparison with radial blow-up barriers. This argument uses continuity and non-decreasing property of $f$ and $g$, without requiring differentiability or strict monotonicity.
\end{enumerate}

This is the first of two companion papers concerning
$$S_k^{\frac{1}k}[\Delta uI-D^2u]=f(u)+g(u)|Du|^q.$$
For the determinant case $k=n$,  the critical gradient exponent is $2-\frac{1}n$, and a radial divergence identity provides an explicit integral representation for the derivative of radial solutions.
For $2\leq k<n$, the radial equation is implicit in the second derivative, and the explicit divergence formula used here does not directly apply. The case $k=1$ reduces to a semilinear equation, since $S_1[\Delta uI-D^2u]=(n-1)\Delta u$. 
For the $k$-Hessian case $1 \leq k < n$, the radial equation has a different structure and leads to different Keller--Osserman conditions. This issue will be studied in the second paper and $k$-independent Keller-Osserman conditions will be obtained.

\subsection{Organization}
The remainder of the paper is organized as follows.
Section \ref{Section2} establishes the qualitative and origin properties of radial solutions and records a general weighted-integral comparison.
Section \ref{Section3} constructs local increasing convex solutions.
Section \ref{Section4} derives two-sided derivative estimates and characterizes global radial solvability. Section \ref{Section5} proves the local comparison principle, the equivalence with entire subsolutions, and the main theorems.
Section \ref{Section6} presents examples and further remarks.


\section{Preliminaries on Radial Solutions}\label{Section2}

This section derives the radial form of \eqref{eq1.1} and establishes the regularity, asymptotic behavior, and qualitative properties of its radial solutions. Throughout the paper, we write $r=|x|$ for $x\in\mathbb R^n$.

We begin with the standard eigenvalue computation for radial functions.
\begin{lemma}[Lemma 2.1 of \cite{JJL2025}]\label{Lem2.1}
Suppose that $\varphi \in C^2[0,+\infty)$ and $\varphi^\prime(0)=0$. Then for $u(x)=\varphi(r)$, we have $u(x)\in C^2(\mathbb{R}^n)$ and the eigenvalues of $\Delta u I - D^2u$ are given by
\begin{equation}\label{2.1}
\lambda(\Delta uI-D^2u)
=\left\{
\begin{aligned}
&\left((n-1)\dfrac{\varphi^\prime}{r},\varphi^{\prime\prime}+(n-2)\dfrac{\varphi^\prime}{r},\ldots,\varphi^{\prime\prime}+(n-2)\dfrac{\varphi^\prime}{r}\right),&&r>0,\\ &\left((n-1)\varphi^{\prime\prime}(0),\ldots,(n-1)\varphi^{\prime\prime}(0)\right),&&r=0.
\end{aligned}
\right.
\end{equation}
Consequently,
\begin{equation}\label{2.2}
\det(\Delta uI-D^2u)=
\left\{
\begin{aligned}
&(n-1)\dfrac{\varphi^\prime(r)}r\left[\varphi^{\prime\prime}(r)+(n-2)\dfrac{\varphi^\prime(r)}r\right]^{n-1},&&r>0,\\ &\bigl((n-1)\varphi^{\prime\prime}(0)\bigr)^n,&&r=0.
\end{aligned}
\right.
\end{equation}
\end{lemma}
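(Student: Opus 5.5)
The plan is a direct computation of $D^2u$ for $r>0$, followed by a separate check at the origin that uses $\varphi'(0)=0$.

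\textbf{Step 1: the Hessian away from the origin.} For $r>0$, write $\nu=x/r$. Then
$$
Du=\varphi'(r)\,\nu,\qquad D^2u=\varphi''(r)\,\nu\otimes\nu+\frac{\varphi'(r)}{r}\bigl(I-\nu\otimes\nu\bigr).
$$
Hence $D^2u$ has eigenvalue $\varphi''$ in the direction $\nu$ and eigenvalue $\varphi'/r$, with multiplicity $n-1$, on $\nu^\perp$. Its trace is $\Delta u=\varphi''+(n-1)\varphi'/r$.

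\textbf{Step 2: eigenvalues of $\Delta uI-D^2u$ for $r>0$.} The matrix $\Delta uI-D^2u$ is diagonal in the same orthonormal frame, so its eigenvalues are $\Delta u-\lambda_i$. In the direction $\nu$ this gives $(n-1)\varphi'/r$. On $\nu^\perp$ it gives $\varphi''+(n-2)\varphi'/r$, with multiplicity $n-1$. Multiplying these eigenvalues yields the first line of \eqref{2.2}.

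\textbf{Step 3: regularity at the origin.} This is the only delicate point. Since $\varphi'(0)=0$ and $\varphi\in C^2$, we have $\varphi'(r)/r=\int_0^1\varphi''(sr)\,ds\to\varphi''(0)$ as $r\to0$. I would therefore extend $\varphi$ evenly to $\mathbb R$. The extension is $C^2$ precisely because $\varphi'(0)=0$.

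Using the formula from Step 1, $Du$ extends continuously by $Du(0)=0$. Moreover $D^2u(x)\to\varphi''(0)I$ as $x\to0$, because the $\nu\otimes\nu$ terms combine into $\bigl(\varphi''(r)-\varphi'(r)/r\bigr)\nu\otimes\nu$, which tends to $0$.

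To conclude, I would note that $u\in C^1$ with $Du$ differentiable at $0$. Indeed, $Du(x)/|x|=(\varphi'(r)/r)\,x$ means $Du(x)=\varphi''(0)x+o(|x|)$, so $D^2u(0)=\varphi''(0)I$ and $D^2u$ is continuous there. Thus $u\in C^2(\mathbb R^n)$.

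\textbf{Step 4: the origin.} At $x=0$ we get $\Delta u(0)=n\varphi''(0)$, so every eigenvalue of $\Delta uI-D^2u$ equals $(n-1)\varphi''(0)$. This gives the second lines of \eqref{2.1} and \eqref{2.2}.
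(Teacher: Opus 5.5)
Your proof is correct and is essentially the standard computation behind this cited lemma (the paper quotes it from \cite{JJL2025} without reproducing a proof). You diagonalize $D^2u$ in the frame $\{\nu,\nu^\perp\}$, subtract its eigenvalues from the trace, and use $\varphi'(r)/r\to\varphi''(0)$ to obtain $D^2u(0)=\varphi''(0)I$ together with continuity of $D^2u$ at the origin. There is one small slip: in Step 3 the identity should read $Du(x)=(\varphi'(r)/r)\,x$ rather than $Du(x)/|x|=(\varphi'(r)/r)\,x$; this corrected form is what actually gives $Du(x)=\varphi''(0)x+o(|x|)$. The even extension of $\varphi$ that you introduce is never used and can be dropped.
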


Let
$$
h(r):=f(\varphi(r))+g(\varphi(r))|\varphi^\prime(r)|^q.
$$
When $q<0$, the term $|\varphi^\prime(r)|^q$ is defined only where $\varphi^\prime(r)\neq0$. By \eqref{2.1}, an $(n-1)$-convex radial solution $u(x)=\varphi(|x|)$ satisfies
\begin{equation}\label{radial_admissibility}
\varphi^\prime(r)\geq0,\quad \varphi^{\prime\prime}(r)+(n-2)\frac{\varphi^\prime(r)}r\geq0,\quad r>0.
\end{equation}
It follows from \eqref{2.2} that an $(n-1)$-convex radial solution of \eqref{eq1.1} satisfies
\begin{equation}\label{sODE}
\left[\varphi^{\prime\prime}(r)+(n-2)\frac{\varphi^\prime(r)}r\right]^{\frac{n-1}{n}}\left[(n-1)\frac{\varphi^\prime(r)}r\right]^{\frac{1}n}=h(r),\quad r>0.
\end{equation}
In particular, $h\geq0$. A direct calculation rewrites \eqref{sODE} in divergence form
\begin{equation}\label{Volterra_differential}
\frac{d}{dr}\left(\left(r^{n-2}\varphi^\prime(r)\right)^{\frac{n}{n-1}}\right)=n(n-1)^{-\frac{n}{n-1}}r^{n-1}h^{\frac{n}{n-1}}(r).
\end{equation}
If $\varphi^\prime(0)=0$, then $r^{n-2}\varphi^\prime(r)\to0$ as $r\to0^+$. Integrating \eqref{Volterra_differential} over $[0,r]$ yields
\begin{equation}\label{Volterra}
\varphi^\prime(r)=\frac{n^{\frac{n-1}{n}}}{n-1}r^{2-n}\left(\int_0^r s^{n-1}h^{\frac{n}{n-1}}(s)ds\right)^{\frac{n-1}{n}},\quad r>0.
\end{equation}

When $g$ may change sign, the condition $h\geq0$ is not available before the radial solution has been constructed.
We set $h^+(r):=\max\{h(r),0\}$ and consider the auxiliary integral equation
\begin{equation}\label{mVolterra}
\varphi^\prime(r)=C_nr^{2-n}\left(\int_0^r s^{n-1}(h^+(s))^{\frac{n}{n-1}}ds\right)^{\frac{n-1}{n}},\quad r>0,
\end{equation}where $C_n=\frac{n^{\frac{n-1}{n}}}{n-1}$.
Equation \eqref{mVolterra} gives $\varphi^\prime(r)\geq0$ for $r>0$. At every point where $\varphi^\prime(r)>0$, differentiating \eqref{mVolterra} gives
\begin{equation}\label{second_positive_part}
\varphi^{\prime\prime}(r)+(n-2)\frac{\varphi^\prime(r)}r=(h^+(r))^{\frac{n}{n-1}}\left((n-1)\frac{\varphi^\prime(r)}r\right)^{-\frac{1}{n-1}}\geq0,
\end{equation}
and hence
\begin{equation}\label{modified_sODE}
\left[\varphi^{\prime\prime}(r)+(n-2)\frac{\varphi^\prime(r)}r\right]^{\frac{n-1}{n}}\left[(n-1)\frac{\varphi^\prime(r)}r\right]^{\frac{1}n}=h^+(r).
\end{equation}
Once $h\geq0$ is established, \eqref{mVolterra} and \eqref{modified_sODE} reduce to \eqref{Volterra} and \eqref{sODE}, respectively. This will follow from condition (A) for increasing solutions, or from Lemma \ref{Lem2.3}~(iii) under condition (B).

The divergence structure gives an explicit representation of $\varphi^\prime(r)/r$, which yields $C^2$ regularity at the origin in the nondegenerate cases. In the degenerate pure-gradient case $0<q<1$ with $f\equiv0$, the same structure yields $C^2$ regularity at the origin for immediately increasing radial solutions. For $q<0$, we obtain a common Sobolev estimate near the origin.

\begin{lemma}\label{Lem2.2}
Assume that $f,g\in C(\mathbb R)$ and $q\in\mathbb R$. Let $R>0$ and let $\varphi\in C^1[0,R]\cap C^2(0,R]$ satisfy \eqref{mVolterra}, $\varphi(0)=a>0$, and $\varphi^\prime(0)=0$. Then the following assertions hold.
\begin{enumerate}[label=(\roman*)]
\item If $q>0$ and $f(a)>0$, then $\varphi\in C^2[0,R]$ and $\varphi^{\prime\prime}(0)=\frac{f(a)}{n-1}$.

\item If $q=0$ and $f(a)+g(a)>0$, then $\varphi\in C^2[0,R]$ and $\varphi^{\prime\prime}(0)=\frac{f(a)+g(a)}{n-1}$.

\item Let $0<q<1$, $f\equiv0$, and $g(a)>0$. Suppose that $\varphi^\prime(r)>0$ on $(0,R_0]$ for some $R_0\in(0,R]$. Then $\varphi\in C^2[0,R]$ and $\varphi^{\prime\prime}(0)=0$.

\item Let $q<0$. Suppose that $\varphi^\prime(r)>0$ and $\varphi^{\prime\prime}(r)\geq0$ on $(0,R_0]$ for some $R_0\in(0,R]$. Then $\varphi\in W_{\mathrm{loc}}^{2,\delta}[0,R]$ for $\delta\in [1,\frac{n(1-q)}{1-nq})$.
\end{enumerate}
\end{lemma}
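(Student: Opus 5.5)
The starting point in every case is the integral representation \eqref{mVolterra}. It shows that
\[
\frac{\varphi'(r)}{r}=C_n\left(r^{-n}\int_0^r s^{n-1}(h^+(s))^{\frac{n}{n-1}}ds\right)^{\frac{n-1}{n}}.
\]
The averaged quantity $n r^{-n}\int_0^r s^{n-1}H(s)\,ds$ tends to $H(0)$ whenever $H$ is continuous at $0$. So in the nondegenerate cases the plan is to show that $h^+$ extends continuously to $r=0$, and then read off $\varphi''(0)$ as the limit of $\varphi'(r)/r$. Continuity of $\varphi''$ at the origin will then come from \eqref{second_positive_part}, which expresses $\varphi''=(h^+)^{\frac{n}{n-1}}\bigl((n-1)\varphi'/r\bigr)^{-\frac1{n-1}}-(n-2)\varphi'/r$ wherever $\varphi'>0$.

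\textbf{Cases (i) and (ii).} For $q>0$ we have $\varphi'(r)\to0$, so $h(r)\to f(a)>0$. For $q=0$, under the convention $|Du|^0=1$, we have $h(r)\to f(a)+g(a)>0$. In either case $h^+$ is continuous at $0$ with positive limit $L$. Hence
\[
\frac{\varphi'(r)}{r}\to C_n\, n^{-\frac{n-1}{n}}L=\frac{L}{n-1}.
\]
This limit equals $\varphi''(0)$ as a difference quotient, since $\varphi'(0)=0$. Because $L>0$, we get $\varphi'>0$ near $0$, so \eqref{second_positive_part} applies there. It gives
\[
\varphi''(r)\to L\Bigl(\tfrac{L}{1}\Bigr)^{-\frac1{n-1}}\cdot L^{\frac1{n-1}}\ldots=L-(n-2)\tfrac{L}{n-1}=\tfrac{L}{n-1}.
\]
Thus $\varphi''$ is continuous at $0$ and $\varphi\in C^2[0,R]$.

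\textbf{Case (iii).} Here $h=g(\varphi)(\varphi')^q$ with $0<q<1$, and $h\to0$ at the origin. First I show $\varphi'(r)/r\to0$, using a bootstrap. Set $M(r)=\sup_{(0,r]}\varphi'/s$. Then $h\le C (sM)^q$ near $0$, so from the formula above $\varphi'(r)/r\le C (r M(r))^q$. This gives $M(r)\le C r^{q}M(r)^q$, hence $M(r)\le C r^{q/(1-q)}\to0$. So $\varphi''(0)=0$. The same bound yields $h^+\lesssim r^{q/(1-q)}$. Positivity of $\varphi'$ on $(0,R_0]$ gives access to \eqref{second_positive_part}, and I need to show its first term tends to $0$. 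That term is $h^{\frac n{n-1}}(\varphi'/r)^{-\frac1{n-1}}$. The main obstacle is obtaining a matching \emph{lower} bound $\varphi'/r\gtrsim r^{q/(1-q)}$, since the first term blows up if $\varphi'/r$ is too small. I would get it from the divergence identity \eqref{Volterra_differential}. Put $w=(r^{n-2}\varphi')^{\frac n{n-1}}$. Then $w'=c\,r^{n-1}g(\varphi)^{\frac n{n-1}}(\varphi')^{\frac{qn}{n-1}}$, with $g(\varphi)\ge g(a)>0$. Writing $\varphi'$ in terms of $w$ turns this into a separable inequality $w'\ge c\, r^{\alpha}w^{q}$ with $q<1$. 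Integrating from $0$, which is possible because $w>0$ on $(0,R_0]$, gives $w^{1-q}\gtrsim r^{\alpha+1}$. This lower bound is exactly comparable to the upper bound, so the first term behaves like a constant times $r^{q/(1-q)}\to0$. Hence $\varphi''\to0$, and $\varphi\in C^2[0,R]$.

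\textbf{Case (iv).} Now $q<0$. The plan is to bound $\varphi''$ by a power of $r$ near $0$ and check integrability. On $(0,R_0]$ we have $\varphi''\ge0$ and $\varphi'>0$, and $\varphi'/r\le C$ by monotonicity-type bounds. From \eqref{Volterra_differential} the same separable argument gives $\varphi'\sim r^{\beta}$. Here $h\sim(\varphi')^q$ dominates as $r\to0$ when $g(a)>0$. Balancing $\varphi'/r\sim (\varphi')^{q}$ in the averaged formula suggests $\varphi'\sim r^{1/(1-q)}$, so $\varphi''\lesssim r^{1/(1-q)-1}=r^{q/(1-q)}$. The claimed range $\delta<\frac{n(1-q)}{1-nq}$, however, indicates that the relevant exponent is $\varphi''\lesssim r^{-\frac{1-nq}{1-q}\cdot\frac1n\cdot\ldots}$. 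Concretely, I would aim for $|\varphi''|^\delta\in L^1(B_R)$ via $\int_0 r^{n-1}\varphi''(r)^\delta dr<\infty$, which requires $\varphi''\lesssim r^{\gamma}$ with $\gamma\delta>-n$. I would then solve for $\gamma$ from the precise two-sided estimates, matching it to the stated threshold. The case $g(a)=0$, where $f$ dominates, gives bounded $\varphi''$ and is easier. Finally, $\varphi\in C^1$ together with these pointwise bounds gives weak second derivatives equal to the classical ones away from $0$. Away from $0$ the function is $C^2$. Together these yield $\varphi\in W^{2,\delta}_{\rm loc}[0,R]$.
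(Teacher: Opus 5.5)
Parts (i)--(iii) are correct and essentially follow the paper. In (i)--(ii) you pass to the limit in the averaged form of \eqref{mVolterra} and then in \eqref{second_positive_part}. The first term there tends to $L^{\frac{n}{n-1}}L^{-\frac{1}{n-1}}=L$; your display is garbled, but the conclusion $\varphi''\to\frac{L}{n-1}$ is right. In (iii) your $w^{1-q}$ computation is the paper's transformed divergence identity. Since $g(\varphi(r))$ is also bounded above near $0$, that identity holds with equality and gives $\varphi'\asymp r^{\frac{1}{1-q}}$ in both directions at once, so your bootstrap for the upper bound is valid but unnecessary. The paper then reads off $\varphi''\to0$ from $r\varphi''/\varphi'\to\frac{1}{1-q}$, which is equivalent to your use of \eqref{second_positive_part}.

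Part (iv) has a genuine gap. First, you aim at the wrong quantity. The claim is the one-dimensional statement $\varphi''\in L^\delta(0,R)$ with respect to $dr$, which is how $W^{2,\delta}_{\mathrm{loc}}[0,R]$ is defined. It is not $\int_0 r^{n-1}|\varphi''|^\delta\,dr<\infty$; the $n$-dimensional statement is deduced afterwards in Remark \ref{Rem2.2}. More seriously, you never derive an estimate from which $\frac{n(1-q)}{1-nq}$ emerges: ``solve for $\gamma$ \ldots\ matching it to the stated threshold'' is reverse engineering. The pointwise bounds $\varphi''\lesssim r^\gamma$ you want are not available, because $f,g$ are only continuous, with no sign or rate information at $a$. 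So splitting by the sign of $g(a)$ is neither exhaustive nor sufficient.

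Two of your auxiliary claims are also false. First, $\varphi'/r$ need not be bounded: if $g(a)>0$ then $\varphi'\sim Dr^{\frac{1}{1-q}}$ with $\frac{1}{1-q}<1$, so $\varphi'/r\to+\infty$. Second, $g(a)=0$ does not give bounded $\varphi''$. Take $f\equiv1$ and $g(t)=((t-a)^+)^\epsilon$ with $0<\epsilon<-\frac q2$; such $\varphi$ exist by Theorem \ref{Th3.1}. Since $h\ge1$, you get $\varphi'\ge\frac{r}{n-1}$, hence $g(\varphi)\ge cr^{2\epsilon}$. If $\varphi''\le K$, then $(\varphi')^q\ge(Kr)^q$, so $h\ge cr^{2\epsilon+q}\to+\infty$. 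The averaged formula then forces $\varphi'/r\to+\infty$, a contradiction (compare \eqref{singular_regular_g_zero}).

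The missing idea is an estimate that uses no asymptotics at all.
\begin{itemize}
\item Near $0$ one has $0<\varphi'\le1$ and $q<0$, so $(\varphi')^q\ge1$ and hence $h^+\le M(\varphi')^q$, where $M$ bounds $|f(\varphi)|+|g(\varphi)|$.
\item Raise \eqref{modified_sODE} to the $n$th power and drop $(n-2)\varphi'/r$, using $\varphi''\ge0$. This gives $(\varphi'')^{n-1}\le\frac{M^n}{n-1}r(\varphi')^{nq-1}$.
\item Set $\theta=\frac{\delta-1}{n-1}$ and $\sigma=(nq-1)\theta$. Then $|\varphi''|^\delta=(\varphi'')^{\delta-1}\varphi''\le Cr^\theta(\varphi')^\sigma\varphi''$.
\item The substitution $d\varphi'=\varphi''\,dr$ bounds $\int_\varepsilon^{R_*}|\varphi''|^\delta\,dr$ by $CR_*^\theta\frac{(\varphi'(R_*))^{\sigma+1}}{\sigma+1}$, uniformly in $\varepsilon$, provided $\sigma>-1$.
\end{itemize}
The condition $\sigma>-1$ is exactly $\delta<\frac{n(1-q)}{1-nq}$. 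The case $\delta=1$ is immediate from $\int_0^{R_*}\varphi''\,dr=\varphi'(R_*)$. This argument handles every configuration of $f(a)$ and $g(a)$ at once.
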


\begin{proof}
We first prove (i)--(ii). In each case, $\lim\limits_{r\to0^+}h(r)=h_0>0$, where $h_0=f(a)$ and $h_0=f(a)+g(a)$, respectively. Thus $h^+=h$ near the origin, and \eqref{mVolterra} reduces to \eqref{Volterra}. Since
$$
\lim\limits_{r\to0^+}\frac{1}{r^n}\int_0^r s^{n-1}h^{\frac{n}{n-1}}(s)ds=\frac{1}n h_0^{\frac{n}{n-1}},
$$
it follows that
\begin{equation}\label{p_over_r_limit}
\lim\limits_{r\to0^+}\frac{\varphi^\prime(r)}r=\lim\limits_{r\to0^+}C_n\left(\frac{1}{r^n}\int_0^r s^{n-1}h^{\frac{n}{n-1}}(s)ds\right)^{\frac{n-1}{n}}=\frac{h_0}{n-1}>0.
\end{equation}
In particular, $\varphi^\prime(r)>0$ for all sufficiently small $r>0$. Substituting \eqref{p_over_r_limit} into \eqref{second_positive_part}, with $h^+=h$, gives
$$
\lim\limits_{r\to0^+}\left(\varphi^{\prime\prime}(r)+(n-2)\frac{\varphi^\prime(r)}r\right)=h_0.
$$
Hence
\begin{equation}\label{second_limit}
\lim\limits_{r\to0^+}\varphi^{\prime\prime}(r)=h_0-(n-2)\frac{h_0}{n-1}=\frac{h_0}{n-1}.
\end{equation}
Since $\varphi^\prime(0)=0$, \eqref{p_over_r_limit} also gives $\varphi^{\prime\prime}(0)=\frac{h_0}{n-1}$. Together with \eqref{second_limit}, this proves (i)--(ii).

We next prove (iii). Since $f\equiv0$, $g(a)>0$, and $\varphi^\prime>0$ on $(0,R_0]$, there exists $R_1\in(0,R_0]$ such that
$$
h(r)=g(\varphi(r))(\varphi^\prime(r))^q>0,\quad r\in(0,R_1].
$$
Substituting this identity into \eqref{Volterra_differential}, multiplying by $(1-q)(r^{n-2}\varphi^\prime(r))^{-\frac{nq}{n-1}}$, and integrating from $0$ to $r$ give
\begin{equation}\label{degenerate_direct_identity}
\left(r^{n-2}\varphi^\prime(r)\right)^{\frac{n(1-q)}{n-1}}=n(1-q)(n-1)^{-\frac{n}{n-1}}\int_0^r s^{n-1+\frac{(2-n)nq}{n-1}}g(\varphi(s))^{\frac{n}{n-1}}ds.
\end{equation}
Since $n+\frac{(2-n)nq}{n-1}>0$ and $g(\varphi(r))\to g(a)$ as $r\to0^+$, \eqref{degenerate_direct_identity} yields
\begin{equation}\label{O_degenerate_1}
\varphi^\prime(r)=O\left(r^{\frac{1}{1-q}}\right),\quad\text{as }r\to0^+.
\end{equation}
Taking the logarithmic derivative of \eqref{degenerate_direct_identity} gives
$$
\frac{n(1-q)}{n-1}\left(n-2+\frac{r\varphi^{\prime\prime}(r)}{\varphi^\prime(r)}\right)=\frac{r^{n+\frac{(2-n)nq}{n-1}}g(\varphi(r))^{\frac{n}{n-1}}}{\displaystyle\int_0^r s^{n-1+\frac{(2-n)nq}{n-1}}g(\varphi(s))^{\frac{n}{n-1}}ds}\to n+\frac{(2-n)nq}{n-1},\quad\text{as }r\to0^+.
$$
Therefore,
\begin{equation}\label{O_degenerate_2}
\frac{r\varphi^{\prime\prime}(r)}{\varphi^\prime(r)}\to\frac{n-1}{n(1-q)}\left(n+\frac{(2-n)nq}{n-1}\right)-(n-2)=\frac{1}{1-q},\quad\text{as }r\to0^+.
\end{equation}
Since $\frac{1}{1-q}>1$, \eqref{O_degenerate_1} gives $\varphi^\prime(r)/r\to0$ as $r\to0^+$, and \eqref{O_degenerate_2} gives $\varphi^{\prime\prime}(r)\to0$ as $r\to0^+$. Hence $\varphi^{\prime\prime}(0)=0$ and $\varphi\in C^2[0,R]$.
This proves (iii).

We finally prove (iv).
Since $\varphi^\prime(r)\to0$ as $r\to0^+$ and $f$ and $g$ are continuous, there exist $R_*\in(0,R_0]$ and $M>0$ such that
$$
0<\varphi^\prime(r)\leq1,\quad |f(\varphi(r))|+|g(\varphi(r))|\leq M,\quad r\in(0,R_*].
$$
Since $q<0$, we have $(\varphi^\prime)^q\geq1$ on $(0,R_*]$, and hence
$$
0\leq h^+(r)\leq |f(\varphi(r))|+|g(\varphi(r))|(\varphi^\prime(r))^q\leq M(\varphi^\prime(r))^q,\quad r\in(0,R_*].
$$
Raising \eqref{modified_sODE} to the power $n$ and using $\varphi^{\prime\prime}\geq0$, we obtain
\begin{equation}\label{MA_second_estimate}
\left(\varphi^{\prime\prime}(r)\right)^{n-1}\leq\frac{r\,(h^+(r))^n}{(n-1)\varphi^\prime(r)}\leq\frac{M^n}{n-1}r(\varphi^\prime(r))^{nq-1},\quad r\in(0,R_*].
\end{equation}
Let $1<\delta<\frac{n(1-q)}{1-nq}$, $\theta:=\frac{\delta-1}{n-1}$, and $\sigma:=(nq-1)\theta$. Then $\sigma>-1$. For any $\varepsilon\in(0,R_*)$, \eqref{MA_second_estimate} gives
\begin{align*}
\int_\varepsilon^{R_*}|\varphi^{\prime\prime}(r)|^\delta dr&\leq\left(\frac{M^n}{n-1}\right)^\theta\int_\varepsilon^{R_*}r^\theta(\varphi^\prime(r))^\sigma\varphi^{\prime\prime}(r)dr\\
&\leq\frac{\left(\frac{M^n}{n-1}\right)^\theta R_*^\theta}{\sigma+1}\left[(\varphi^\prime(R_*))^{\sigma+1}-(\varphi^\prime(\varepsilon))^{\sigma+1}\right].
\end{align*}
Letting $\varepsilon\to0^+$ yields $\varphi^{\prime\prime}\in L^\delta(0,R_*)$. Moreover, since $\varphi^{\prime\prime}\geq0$ near the origin and $\varphi^\prime(0)=0$, we have $\int_0^{R_*}|\varphi^{\prime\prime}(r)|dr=\varphi^\prime(R_*)<+\infty$. Together with $\varphi\in C^2(0,R]$, this proves (iv).
\end{proof}

\begin{remark}\label{Rem2.2-iv}
For $q<0$, the one-dimensional Sobolev embedding applied to Lemma \ref{Lem2.2}~(iv) gives $\varphi\in C^{1,\alpha}[0,R]$ for any $0<\alpha<\frac{n-1}{n(1-q)}$. Under the hypotheses of Lemma \ref{Lem2.2}~(iv) and condition (A), the origin regularity improves in the following cases. If $g(a)=0$, then $f(a)>0$, \eqref{Volterra} gives $\varphi^\prime(r)/r\geq f(a)/(n-1)$, and the estimate \eqref{MA_second_estimate} yields
\begin{equation}\label{singular_regular_g_zero}
\varphi\in W_{\mathrm{loc}}^{2,\delta}[0,R],\quad 1\leq\delta<1-\frac{n-1}{nq}.
\end{equation}
If, in addition, $\ell:=\lim\limits_{r\to0^+}g(\varphi(r))(\varphi^\prime(r))^q\in\mathbb R$ and $f(a)+\ell>0$, then the proof of Lemma \ref{Lem2.2}~(i)--(ii) gives $\varphi\in C^2[0,R]$ and $\varphi^{\prime\prime}(0)=\frac{f(a)+\ell}{n-1}$. If $g(a)>0$, write $h(r)=(g(\varphi(r))+f(\varphi(r))(\varphi^\prime(r))^{-q})(\varphi^\prime(r))^q$. The coefficient in parentheses tends to $g(a)>0$ when $r\to0^+$. The integration and logarithmic differentiation used in the proof of Lemma \ref{Lem2.2}~(iii), with this coefficient in place of $g(\varphi)$, yield
\begin{equation}\label{O}
\varphi^\prime(r)=Dr^{\frac{1}{1-q}}(1+o(1)),\quad \varphi^{\prime\prime}(r)=\frac{D}{1-q}r^{\frac{q}{1-q}}(1+o(1)),\quad\text{as }r\to0^+,
\end{equation}where $D:=\left[g(a)\left(\frac{1}{1-q}+n-2\right)^{-\frac{n-1}{n}}(n-1)^{-\frac{1}n}\right]^{\frac{1}{1-q}}$,
and therefore
\begin{equation}\label{singular_regular_g_positive}
\varphi\in W_{\mathrm{loc}}^{2,\delta}[0,R],\quad 1\leq\delta<1-\frac{1}q.
\end{equation}
\end{remark}
\begin{remark}
The derivative conditions in Lemma \ref{Lem2.2}~(iii)--(iv) are stated for solutions of \eqref{mVolterra}. Theorems \ref{Th3.1} and \ref{Th3.2} construct local solutions satisfying $\varphi^\prime(r)>0$ and $\varphi^{\prime\prime}(r)\geq0$ for $r\in(0,R]$. Thus no separate monotonicity or convexity hypothesis is required in the main existence theorems.
\end{remark}

We next establish monotonicity and convexity for solutions of the auxiliary equation \eqref{mVolterra}. The positivity conclusion in part (iii) will justify returning to the original equation when $g$ changes sign.

\begin{lemma}\label{Lem2.3}
Assume that $f,g\in C(\mathbb R)$ and $q\in\mathbb R$.
Let $\varphi\in C^1[0,+\infty)\cap C^2(0,+\infty)$ satisfy \eqref{mVolterra}, $\varphi(0)=a>0$, and $\varphi^\prime(0)=0$.
If $q>0$, assume that either $f(a)>0$ or $\varphi^\prime(r)>0$ on $(0,r_0]$ for some $r_0>0$. If $q=0$, assume that $f(a)+g(a)>0$.
Then the following assertions hold.
\begin{enumerate}[label=(\roman*)]
\item $\varphi^\prime(r)>0$ and $\varphi^{\prime\prime}(r)+(n-2)\frac{\varphi^\prime(r)}r\geq0$ for all $r>0$.

\item If $f$ and $g$ are non-decreasing, then $\varphi^{\prime\prime}(r)\geq0$ for all $r>0$.

\item Assume that $q>0$, $f$ and $g$ are non-decreasing, and $f(a)>0$. Then $h(r)>0$ for $r>0$ and
\begin{equation}\label{iii}
\varphi^\prime(r)<\left(\frac{f(\varphi(r))}{g^-(\varphi(r))}\right)^{\frac{1}q},\quad r>0,
\end{equation}
where $g^-(t):=\max\{-g(t),0\}$ and the right-hand side is understood as $+\infty$ when $g^-(\varphi(r))=0$.
\end{enumerate}
\end{lemma}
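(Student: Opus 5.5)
The plan is to prove (i)--(iii) in that order, working directly from \eqref{mVolterra} and \eqref{second_positive_part}.

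\textbf{Part (i).} Set $\Psi(r):=\int_0^r s^{n-1}(h^+(s))^{\frac{n}{n-1}}ds$. By \eqref{mVolterra}, $\Psi$ is non-decreasing and $\varphi^\prime(r)>0$ if and only if $\Psi(r)>0$. So it suffices to find arbitrarily small $r$ with $\Psi(r)>0$.
\begin{itemize}
\item If $f(a)>0$ with $q>0$, or $f(a)+g(a)>0$ with $q=0$, the limit computation \eqref{p_over_r_limit} from the proof of Lemma \ref{Lem2.2} gives $\varphi^\prime>0$ near $0$.
\item In the other $q>0$ case, $\varphi^\prime>0$ near $0$ is assumed.
\item For $q<0$, argue by contradiction. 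If $\Psi\equiv0$ on $[0,r_1]$, then $\varphi^\prime\equiv0$ there. Then $|\varphi^\prime|^q$ is undefined, or, read through $h^+$ as the natural $+\infty$ convention, positive. I will use the convention forcing $h^+>0$ where $\varphi^\prime=0$ and $g(a)>0$. If that is unavailable, I will treat this as part of the hypothesis of the definition.
\end{itemize}
In all cases, once $\Psi(r_1)>0$ for some small $r_1$, monotonicity of $\Psi$ gives $\varphi^\prime>0$ for all $r\ge r_1$, hence on all of $(0,\infty)$. The inequality $\varphi^{\prime\prime}+(n-2)\varphi^\prime/r\ge0$ then follows from \eqref{second_positive_part} wherever $\varphi^\prime>0$.

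\textbf{Part (ii).} The idea is to compare $\varphi^{\prime\prime}$ with $\varphi^\prime/r$. Since $\varphi$ is increasing by (i), and $f,g$ are non-decreasing, $f(\varphi(s))$ and $g(\varphi(s))$ are non-decreasing in $s$. The main obstacle is that $h^+$ itself need not be monotone, because $|\varphi^\prime|^q$ need not be.

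My first attempt is the following. From \eqref{mVolterra}, $(r^{n-2}\varphi^\prime)^{\frac n{n-1}}=C_n^{\frac n{n-1}}\Psi(r)$. Differentiating and writing $p=\varphi^\prime$,
\[
\frac{r\varphi^{\prime\prime}}{p}=\frac{n-1}{n}\,\frac{r^n(h^+)^{\frac n{n-1}}}{\Psi}-(n-2).
\]
So $\varphi^{\prime\prime}\ge0$ is equivalent to $r^n(h^+(r))^{\frac n{n-1}}\ge\frac{n(n-2)}{n-1}\Psi(r)$. This would follow with room to spare if $h^+(s)\le h^+(r)$ for $s<r$, since then $\Psi(r)\le\frac{r^n}{n}(h^+(r))^{\frac n{n-1}}$.

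To handle non-monotonicity, I argue by contradiction at the first point where $\varphi^{\prime\prime}<0$.
\begin{itemize}
\item Let $r_*=\inf\{r:\varphi^{\prime\prime}(r)<0\}$. On $(0,r_*]$, $\varphi^\prime$ is non-decreasing, so $h^+$ is non-decreasing there when $q\ge0$ and $g\ge0$.
\item In general, split $h=f(\varphi)+g(\varphi)p^q$ into signs. Near $r_*$, $p$ decreases slightly, and the product $g\,p^q$ moves in the favourable direction whenever $g\,q\le0$.
\item The delicate case $g>0$, $q>0$ is handled by continuity. At $r_*$, the strict inequality $\Psi<\frac{r^n}{n}(h^+)^{\frac n{n-1}}\cdot\frac{n}{n-1}\cdot\frac{n-1}{n-2}$ holds with slack, since $\frac{n-1}{n(n-2)}>\frac1n$. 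That slack persists for a short interval beyond $r_*$, which contradicts the choice of $r_*$.
\end{itemize}
For $q<0$ with $g>0$, $p^q$ increases as $p$ decreases, which is again favourable.

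\textbf{Part (iii).} Since $f(a)>0$, we have $h(0^+)=f(a)>0$. Suppose $h$ vanishes somewhere, and let $r_0$ be the first zero. On $(0,r_0)$, $h>0$ gives $\varphi^\prime<(f/g^-)^{1/q}$. At $r_0$ equality holds, so $g(\varphi(r_0))<0$. On $(0,r_0)$, $p=\varphi^\prime$ is increasing by (ii), while $f(\varphi)/g^-(\varphi)$ is non-decreasing, since $f$ increases and $g^-$ decreases along increasing $\varphi$. This alone does not contradict equality.

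The key is the behaviour at $r_0$ itself. There $h^+(r_0)=0$, so \eqref{second_positive_part} gives $\varphi^{\prime\prime}+(n-2)p/r=0$ at $r_0$, and hence $\varphi^{\prime\prime}(r_0)<0$ when $n>2$. This contradicts (ii). For $n=2$, $\varphi^{\prime\prime}(r_0)=0$, and I would compare derivatives: $\frac{d}{dr}(f/g^-)^{1/q}\ge0$ while $p$ is stationary, so I need to look at higher order behaviour. Alternatively, I would use $\Psi$-based strictness. Concretely, the quotient bound from part (ii) gives $r\varphi^{\prime\prime}/p=\frac{n-1}{n}r^nh^{\frac n{n-1}}/\Psi-(n-2)$, and this is negative at $r_0$ for $n=2$ as well, because it equals $0-0$. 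This borderline $n=2$ case is the step I expect to need the most care.

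Positivity $h>0$ then yields \eqref{iii} directly.
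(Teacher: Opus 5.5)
Your part (i) matches the paper, including reading well-definedness of \eqref{mVolterra} for $q<0$ as forcing $\varphi'\neq0$. Part (ii), however, has a genuine gap. The bound $\Psi(r_*)\le\frac{r_*^n}{n}(h^+(r_*))^{\frac{n}{n-1}}$ needs $h^+$ to be non-decreasing on all of $(0,r_*]$, where $\varphi'$ is increasing. This holds when $gq\ge0$ along the solution, but fails in the other sign cases:
\begin{itemize}
\item $q>0$, $g<0$: for example $f\equiv1$, $g\equiv-1$ under condition (B), where $h=1-(\varphi')^q$ strictly decreases;
\item $q<0$, $g>0$: for example $f\equiv0$, $g\equiv1$, where $h=(\varphi')^q$ strictly decreases.
\end{itemize}
Your remark that $gq\le0$ is ``favourable'' concerns the region after $r_*$, where $\varphi'$ decreases. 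It says nothing about $(0,r_*]$, which is what controls $\Psi(r_*)$. Moreover, if $r_*>0$, continuity forces $\varphi''(r_*)=0$, i.e.\ equality in your criterion $r^n(h^+)^{\frac{n}{n-1}}\ge\frac{n(n-2)}{n-1}\Psi$, so in these cases there is no slack to propagate. The case $r_*=0$ is also not treated.

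The missing idea is the paper's comparison at equal slopes. If $\varphi''(\tau)<0$, let $\sigma<\tau$ be the first point with $\varphi'(\sigma)=\varphi'(\tau)$. Then $\varphi''(\sigma)\ge0>\varphi''(\tau)$ and $\varphi'(\sigma)/\sigma>\varphi'(\tau)/\tau$. By strict monotonicity of $\mathcal F(y,z)=(y+(n-2)z)^{\frac{n-1}{n}}((n-1)z)^{\frac1n}$ and \eqref{modified_sODE}, this gives $h^+(\tau)<h^+(\sigma)$. But the gradient factors coincide and $\varphi(\tau)>\varphi(\sigma)$, so $h(\tau)\ge h(\sigma)$ from the monotonicity of $f,g$ alone. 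This needs no sign condition on $g$ or $q$.

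In (iii), your $n\ge3$ argument is correct once (ii) holds. But it invokes (ii) exactly in the case $q>0$, $g<0$, where your proof of (ii) breaks. For $n=2$ the argument fails outright. There \eqref{second_positive_part} reads $\varphi''=r(h^+)^2/\varphi'\ge0$, which is compatible with $h(r_0)=0$. Your quotient $\frac{n-1}{n}\frac{r^nh^{\frac{n}{n-1}}}{\Psi}-(n-2)$ equals $0-0=0$ at $r_0$, which is not negative.

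A quantitative argument is needed here. The paper notes that $h(r_0)=0$ and $f(\varphi(r_0))\ge f(a)>0$ force $g(\varphi(r_0))<0$. Monotonicity then gives $f(\varphi(r))+g(\varphi(r))(\varphi'(r_0))^q\le0$ for $r<r_0$. Using $\varphi''=rh^2/\varphi'$ on $(0,r_0)$, this yields, for $r$ near $r_0$,
\[
0<h(r)\le-g(\varphi(r))\bigl((\varphi'(r_0))^q-(\varphi'(r))^q\bigr)\le C\bigl(\varphi'(r_0)-\varphi'(r)\bigr)\le C\int_r^{r_0}h^2(s)\,ds .
\]
Hence $\max_{[r_1,r_0]}h\le C(r_0-r_1)\bigl(\max_{[r_1,r_0]}h\bigr)^2$, which is impossible as $r_1\to r_0^-$.
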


\begin{proof}
We first prove (i). Since the integrand in \eqref{mVolterra} is non-negative, we have $\varphi^\prime(r)\geq0$ for $r>0$.
If $q>0$ and $f(a)>0$, then $\lim\limits_{r\to0^+}h(r)=f(a)>0$.
If $q=0$, then $\lim\limits_{r\to0^+}h(r)= f(a)+g(a)>0$. In either case, $h^+(r)>0$ for all sufficiently small $r>0$. Hence, for any $r>0$,
$$
\int_0^r s^{n-1}h^+(s)^{\frac{n}{n-1}}ds>0,
$$
and \eqref{mVolterra} gives $\varphi^\prime(r)>0$ for all $r>0$. If $q>0$ and $\varphi^\prime(r)>0$ on $(0,r_0]$, then \eqref{mVolterra} at $r=r_0$ gives
$$
\int_0^{r_0}s^{n-1}h^+(s)^{\frac{n}{n-1}}ds>0,
$$
which yields $\varphi^\prime(r)>0$ for all $r\geq r_0$, and therefore for all $r>0$. If $q<0$, the fact that \eqref{mVolterra} is well-defined implies $\varphi^\prime(r)\neq0$ for any $r>0$. Combining this with $\varphi^\prime(r)\geq0$, we again obtain $\varphi^\prime(r)>0$ for all $r>0$.
Consequently, \eqref{second_positive_part} is valid for any $r>0$. This proves (i).

We prove (ii). If $n=2$, the conclusion follows directly from \eqref{second_positive_part}. Let $n\geq3$ and define
$$
\mathcal F(y,z):=\bigl(y+(n-2)z\bigr)^{\frac{n-1}{n}}\bigl((n-1)z\bigr)^{\frac{1}n}
$$
for $z>0$ and $y+(n-2)z\geq0$. The function $\mathcal F$ is increasing in both variables. Suppose that $\varphi^{\prime\prime}(\tau)<0$ for some $\tau>0$. Since $\varphi^\prime(0)=0<\varphi^\prime(\tau)$ and $\varphi^{\prime\prime}(\tau)<0$, $\sigma:=\min\{r\in(0,\tau):\varphi^\prime(r)=\varphi^\prime(\tau)\}$ is well-defined. Then $0<\sigma<\tau$, $\varphi^{\prime\prime}(\sigma)\geq0$, $\varphi^\prime(\sigma)=\varphi^\prime(\tau)>0$, and $\frac{\varphi^\prime(\sigma)}{\sigma}>\frac{\varphi^\prime(\tau)}{\tau}$.
Consequently,
\begin{equation}\label{F_comparison}
\mathcal F\left(\varphi^{\prime\prime}(\tau),\frac{\varphi^\prime(\tau)}{\tau}\right)<\mathcal F\left(\varphi^{\prime\prime}(\sigma),\frac{\varphi^\prime(\sigma)}{\sigma}\right).
\end{equation}
On the other hand, (i) gives $\varphi(\tau)>\varphi(\sigma)$. Since $f$ and $g$ are non-decreasing and $\varphi^\prime(\tau)=\varphi^\prime(\sigma)$, we have $h(\tau)\geq h(\sigma)$ and hence $h^+(\tau)\geq h^+(\sigma)$. This contradicts \eqref{F_comparison} in view of \eqref{modified_sODE}. Therefore $\varphi^{\prime\prime}(r)\geq0$ for $r>0$.

We finally prove (iii). If $n\geq3$, (i) and (ii) give
$$
\frac{\varphi^\prime(r)}r>0,\quad \varphi^{\prime\prime}(r)+(n-2)\frac{\varphi^\prime(r)}r>0,\quad r>0.
$$
Thus \eqref{modified_sODE} implies $h^+(r)>0$, and hence $h(r)>0$ for all $r>0$.

It remains to consider $n=2$. Since $\lim\limits_{r\to0^+}h(r)=f(a)>0$, any connected component $(r_1,r_2)$ of $\{r>0:h(r)<0\}$ has $r_1>0$ and $h(r_1)=0$.
On this interval, it follows from \eqref{second_positive_part} that $\varphi^{\prime\prime}=0$. Hence $\varphi^\prime$ is constant on $(r_1,r_2)$, and the monotonicity of $f$ and $g$ implies that $h=f(\varphi)+g(\varphi)(\varphi^\prime)^q$ is non-decreasing. This contradicts $h(r_1)=0$ and $h<0$ on $(r_1,r_2)$. Therefore $h(r)\geq0$ for $r>0$.
Suppose that $h$ vanishes, and let $r_0>0$ be its first zero. Then $h>0$ on $(0,r_0)$. By \eqref{second_positive_part} with $n=2$, we have
$$
\varphi^{\prime\prime}(r)=\frac{rh^2(r)}{\varphi^\prime(r)},\quad 0<r<r_0.
$$
Since $f(\varphi(r_0))\geq f(a)>0$, the identity $h(r_0)=0$ gives $g(\varphi(r_0))<0$. For $r<r_0$, the monotonicity of $f$ and $g$ yields
$$
f(\varphi(r))+g(\varphi(r))(\varphi^\prime(r_0))^q\leq f(\varphi(r_0))+g(\varphi(r_0))(\varphi^\prime(r_0))^q=0.
$$
Since $\varphi^\prime$ is non-decreasing, it follows that, for $r<r_0$ sufficiently close to $r_0$,
$$
0<h(r)\leq-g(\varphi(r))\bigl((\varphi^\prime(r_0))^q-(\varphi^\prime(r))^q\bigr)\leq C\bigl(\varphi^\prime(r_0)-\varphi^\prime(r)\bigr)\leq C\int_r^{r_0}h^2(s)ds.
$$
Taking the maximum over $[r_1,r_0]$, with $r_1<r_0$ sufficiently close to $r_0$, gives
$$
0<\max_{[r_1,r_0]}h\leq C(r_0-r_1)\left(\max_{[r_1,r_0]}h\right)^2,
$$
This is impossible as $r_1\to r_0^-$. Hence $h(r)>0$ for $r>0$. If $g(\varphi(r))<0$, then $g^-(\varphi(r))(\varphi^\prime(r))^q<f(\varphi(r))$, which gives \eqref{iii}. Otherwise, the stated convention applies. This proves (iii).
\end{proof}

\begin{remark}\label{Rem2.2}
Let $R>0$ and let $\varphi\in C^1[0,R]\cap C^2(0,R]$ satisfy \eqref{mVolterra}, $\varphi(0)=a>0$, and $\varphi^\prime(0)=0$. Assume condition (A), with no additional hypothesis if $q\leq0$; if $q>0$, assume either $f(a)>0$ or $\varphi^\prime>0$ on $(0,r_0]$ for some $r_0\in(0,R]$. Alternatively, assume condition (B) and $q>0$. Then the proof of Lemma \ref{Lem2.3}~(i)--(ii), applied on $(0,R]$, gives
$$
\varphi^\prime(r)>0,\quad \varphi^{\prime\prime}(r)\geq0,\quad r\in(0,R].
$$
Under condition (A), $f,g\geq0$, $f(t)+g(t)>0$ for $t>0$, and $\varphi^\prime>0$ give $h>0$. Under condition (B), Lemma \ref{Lem2.3}~(iii) gives $h>0$. Consequently, $h^+=h$, and \eqref{mVolterra} and \eqref{modified_sODE} reduce to \eqref{Volterra} and \eqref{sODE}, respectively. By \eqref{2.1}, the associated radial function $v(x):=\varphi(|x|)$ is strictly $(n-1)$-convex in $B_R\setminus\{0\}$.

Suppose that $q<0$ and that the hypotheses of Lemma \ref{Lem2.2}~(iv) hold. Since $\varphi^\prime$ is absolutely continuous and $\varphi^\prime(0)=0$, Jensen's inequality and integration in polar coordinates give
$$
\int_0^R\left|\frac{\varphi^\prime(r)}r\right|^\delta r^{n-1}dr\leq\frac{R^{n-1}}{n-1}\int_0^R|\varphi^{\prime\prime}(r)|^\delta dr.
$$
The radial Hessian formula therefore yields
$$
v\in C^1(B_R)\cap C^2(B_R\setminus\{0\})\cap W_{\mathrm{loc}}^{2,\delta}(B_R),\quad 1\leq\delta<\frac{n(1-q)}{1-nq}.
$$
If $g(a)=0$ and the finite-limit assumptions in Remark \ref{Rem2.2-iv} hold, then $v\in C^2(B_R)$.
If $g(a)>0$, \eqref{O} yields $|D^2v(x)|\leq C|x|^{\frac{q}{1-q}}$ for all sufficiently small $|x|>0$, which gives the improved local Sobolev regularity
$v\in W_{\mathrm{loc}}^{2,\delta}(B_R)$ for $\delta\in[1,n(1-\frac{1}q))$.
\end{remark}

We conclude this section with a parameter comparison for exponentially weighted Keller--Osserman integrals. We state it in a general form that also applies to the companion $k$-Hessian paper.

\begin{lemma}\label{Lem2.4}
Let $F\in C([0,+\infty),[0,+\infty))$ be non-decreasing, with $F(t)>0$ for $t>0$.
\begin{enumerate}[label=(\roman*)]
\item Let $P\in C((0,+\infty),[0,+\infty))$ be non-increasing. For $\lambda>0$, $b\geq0$, and $t>b$, set
\begin{equation}\label{weighted_J_definition}
J_{\lambda,b}^-(t):=\int_b^t F(s)e^{-\lambda\int_s^tP(\tau)d\tau}ds.
\end{equation}
When $b=0$, this denotes $\lim\limits_{b\to0^+}J_{\lambda,b}^-(t)$. The function $J_{\lambda,b}^-$ is positive and non-decreasing on $(b,+\infty)$, and, for $0<\mu<\lambda$,
\begin{equation}\label{weighted_parameter_comparison}
J_{\lambda,b}^-(t)\leq J_{\mu,b}^-(t)\leq\frac{\lambda}{\mu} J_{\lambda,b}^-(t),\quad t>b.
\end{equation}
Consequently, for any $\theta>0$ and $A>b$, the convergence or divergence of
$$
\int_A^{+\infty} \frac{dt}{\left(J_{\lambda,b}^-(t)\right)^\theta}
$$
is independent of $\lambda>0$ and of the finite lower limits $b$ and $A$.

\item Let $G\in C([0,+\infty),[0,+\infty))$ be non-decreasing. Fix $a>0$ and $0<\theta<1$ and, for $\lambda>0$, consider
\begin{equation}\label{positive_weighted_J_definition}
\int_a^{+\infty} \frac{dt}{\left(\int_a^t F(s)e^{\lambda\int_s^tG(\tau)d\tau}ds\right)^\theta}.
\end{equation}
If $G\equiv0$ on $[a,+\infty)$, the integral is independent of $\lambda$; otherwise, it is finite for every $\lambda>0$. In particular, its convergence or divergence is independent of $\lambda>0$.
\end{enumerate}
\end{lemma}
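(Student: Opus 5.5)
For part (i), positivity is immediate because $F>0$ on $(0,+\infty)$ and the exponential weight is positive. For monotonicity, I would differentiate in $t$, or rather argue without differentiability of $P$: for $t_1<t_2$, split $J^-_{\lambda,b}(t_2)$ at $t_1$ and substitute $s\mapsto s+(t_2-t_1)$. Concretely,
$$
J_{\lambda,b}^-(t)=\int_0^{t-b}F(t-\sigma)e^{-\lambda\int_{t-\sigma}^{t}P(\tau)d\tau}d\sigma .
$$
As $t$ increases, $F(t-\sigma)$ does not decrease, since $F$ is non-decreasing. The exponent $\int_{t-\sigma}^t P$ does not increase, since $P$ is non-increasing and the window of fixed length $\sigma$ slides to the right. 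The range $[0,t-b]$ also grows. Hence $J^-_{\lambda,b}$ is non-decreasing. For $b=0$ the limit exists by monotone convergence in $b$, and the same representation holds.

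For \eqref{weighted_parameter_comparison}, the left inequality is trivial because $\mu<\lambda$. For the right inequality, write $E(s)=\int_s^tP$, which is non-increasing in $s$ with $E'(s)=-P(s)$. The key identity is
$$
e^{-\mu E}-e^{-\lambda E}=\int_\mu^\lambda E e^{-\nu E}d\nu ,
$$
but it seems cleaner to use a layer-cake / integration-by-parts argument. Set $K(s)=\int_s^tF$-type quantities? Instead, I would try the following. Since $F$ is non-decreasing, $J$ is a "monotone rearrangement" integral. Write $F(s)=F(b)+\int_{(b,s]}dF(\rho)$ and use Fubini:
$$
J^-_{\lambda,b}(t)=F(b)\int_b^te^{-\lambda E(s)}ds+\int_{(b,t]}\Bigl(\int_\rho^t e^{-\lambda E(s)}ds\Bigr)dF(\rho).
$$
It then suffices to prove $\int_\rho^te^{-\mu E}ds\le\frac{\lambda}{\mu}\int_\rho^te^{-\lambda E}ds$ for every $\rho$.

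To prove this, change variables to $y=E(s)$, so that $dy=-P\,ds$. Since $P$ is non-increasing, $ds=dy/P$, and $1/P(s)$ is non-decreasing in $s$, hence non-increasing in $y$. Thus $\int_\rho^te^{-\lambda E}ds=\int_0^{Y}e^{-\lambda y}w(y)dy$ with $w$ non-increasing in $y$, plus a contribution from the set where $P=0$. On that set $E$ is constant and both weights coincide, so the bound holds trivially with constant $1\le\lambda/\mu$. The claimed bound then becomes $\int_0^Ye^{-\mu y}w\le\frac{\lambda}{\mu}\int_0^Ye^{-\lambda y}w$ for non-increasing $w\ge0$. By layer cake, it reduces to $w=\mathbf 1_{[0,c]}$, that is, to $\frac{1-e^{-\mu c}}{\mu}\le\frac{\lambda}{\mu}\cdot\frac{1-e^{-\lambda c}}{\lambda}$. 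This is true because $1-e^{-x}$ is increasing.

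\textbf{This reduction is the main obstacle.} It requires handling possible zeros of $P$ (where $E$ is flat) and the non-differentiability of $P$. To get around this, I would first approximate $P$ by $P+\varepsilon$, which is still non-increasing and positive, and then let $\varepsilon\to0$ by dominated convergence. The consequence for $\int_A^\infty J^{-\theta}$ follows from the two-sided comparison. Independence of $b$ and $A$ follows by splitting $J_{\lambda,b}^-(t)=e^{-\lambda\int_{b'}^tP}J_{\lambda,b}^-(b')+J_{\lambda,b'}^-(t)$ for $b<b'$. The first term is bounded by a constant, and $J_{\lambda,b'}^-(t)$ is bounded below by a positive constant for $t\ge A$, using monotonicity. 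So the two quantities are comparable for large $t$. Changing $A$ only alters a finite integral of a positive continuous function.

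For part (ii), if $G\equiv0$ on $[a,\infty)$ the weight is $1$ and there is nothing to prove. Otherwise there is $t_0\ge a$ with $G\ge G(t_0)=:g_0>0$ on $[t_0,\infty)$. For $t\ge t_0+1$, I would bound the inner integral below by
$$
\int_{t_0}^{t_0+1}F(s)e^{\lambda g_0(t-t_0-1)}ds\ge c\,e^{\lambda g_0 t}.
$$
Since $\theta>0$, the integrand is then $O(e^{-\lambda g_0\theta t})$, which is integrable. The remaining piece over $[a,t_0+1]$ is finite because the inner integral is positive and continuous for $t>a$. Near $t=a$, the inner integral is at least $F(a)(t-a)$, and $(t-a)^{-\theta}$ is integrable because $\theta<1$.
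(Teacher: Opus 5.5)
Your proof is correct, but for the key inequality $J^-_{\mu,b}\le\frac{\lambda}{\mu}J^-_{\lambda,b}$ you take a heavier route than the paper.

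The paper uses a single contraction toward $t$, namely $s=t-\frac{\mu}{\lambda}(t-r)$ for $r\in[b,t]$. Because $P$ is non-increasing, the same substitution in the inner integral gives $\int_s^tP\le\frac{\mu}{\lambda}\int_r^tP$, and $F(s)\ge F(r)$ since $s\ge r$. Restricting $J^-_{\lambda,b}(t)$ to $[t-\frac{\mu}{\lambda}(t-b),t]$ and substituting then yields $J^-_{\lambda,b}(t)\ge\frac{\mu}{\lambda}J^-_{\mu,b}(t)$ in one line. This uses both monotonicities at once and never has to deal with zeros of $P$ or its regularity.

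Your argument instead decomposes $F$ through its Stieltjes measure to reduce to indicators $\mathbf 1_{[\rho,t]}$. You then reparametrize by $y=\int_s^tP$, so that $1/P$ becomes a non-increasing weight, and a second layer-cake reduction leaves the scalar inequality $1-e^{-\mu c}\le1-e^{-\lambda c}$. This is valid. Your observation that $P$ can vanish only on a terminal interval, where both weights equal $1$, handles the one delicate point; the $P+\varepsilon$ approximation would also do. The approach makes transparent that the constant $\lambda/\mu$ comes level set by level set. The cost is Fubini over $dF$, a $C^1$ change of variables, and a limiting step, all of which the paper's dilation avoids.

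For independence of $b$, your splitting $J^-_{\lambda,b}(t)=e^{-\lambda\int_{b'}^tP}J^-_{\lambda,b}(b')+J^-_{\lambda,b'}(t)$ gives $J^-_{\lambda,b'}\le J^-_{\lambda,b}\le C\,J^-_{\lambda,b'}$ for $t\ge A>b'$. It uses only positivity and monotonicity of $J$. The paper instead traps $J^-_{\lambda,b}(t)$ between $J^-_{\lambda,0}(t/2)$ and $J^-_{\lambda,0}(t)$ by a translation and then rescales $t=2s$. Both work, and yours is slightly more elementary.

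Your monotonicity argument and part (ii) coincide with the paper's. When you write this up, drop the abandoned attempts (the $\nu$-integral identity and the line about $K(s)$). Also state explicitly that the comparison for $b=0$ follows by letting $b\to0^+$, since $P$ may be non-integrable near $0$.
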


\begin{proof}
We first prove (i) for $b>0$. Fix $0<\mu<\lambda$. For $r\in[b,t]$, set $s=t-\frac{\mu}{\lambda}(t-r)$. Since $P$ is non-increasing and $s\geq r$,
$$
\int_s^tP(\tau)d\tau\leq\frac{\mu}{\lambda}\int_r^tP(\tau)d\tau.
$$
Changing variables and using the monotonicity of $F$, we obtain
\begin{align*}
J_{\lambda,b}^-(t)&\geq\frac{\mu}{\lambda}\int_b^t F\left(t-\frac{\mu}{\lambda}(t-r)\right)e^{-\lambda\int_{t-\frac{\mu}{\lambda}(t-r)}^tP(\tau)d\tau}dr\\
&\geq\frac{\mu}{\lambda}\int_b^t F(r)e^{-\mu\int_r^tP(\tau)d\tau}dr=\frac{\mu}{\lambda} J_{\mu,b}^-(t).
\end{align*}
The inequality $J_{\lambda,b}^-(t)\leq J_{\mu,b}^-(t)$ follows from $P\geq0$. Letting $b\to0^+$ proves the same comparison for $b=0$; the limit is finite since the exponential factor is at most $1$. This proves independence of $\lambda$ for every $\theta>0$.

For $t>b$, changing variables gives
$$
J_{\lambda,b}^-(t)=\int_0^{t-b}F(t-s)e^{-\lambda\int_0^sP(t-\tau)d\tau}ds.
$$
The integrand is non-decreasing in $t$, since $F$ is non-decreasing and $P$ is non-increasing. Thus $J_{\lambda,b}^-$ is non-decreasing. For $t>2b$, translating the integral over $[t/2,t]$ by $t/2$ also gives
$$
J_{\lambda,0}^-(t/2)\leq J_{\lambda,b}^-(t)\leq J_{\lambda,0}^-(t).
$$
These inequalities, followed by the substitution $t=2s$ in the outer integral, prove independence of $b$. Independence of $A>b$ follows from positivity and continuity on $(b,+\infty)$.

We next prove (ii). Since $F(a)>0$ and $\theta<1$, the integral \eqref{positive_weighted_J_definition} is finite near $t=a$. If $G\not\equiv0$ on $[a,+\infty)$, choose $T>a$ such that $G(T)>0$. For $t\geq T$ and $\lambda>0$, the monotonicity of $G$ gives
$$
\int_a^t F(s)e ^{\lambda\int_s^tG(\tau)d\tau}ds\geq e ^{\lambda G(T)(t-T)}\int_a^T F(s)ds.
$$
Thus the outer integrand is bounded by a constant multiple of $e^{-\theta\lambda G(T)(t-T)}$, and the integral is finite at infinity. If $G\equiv0$ on $[a,+\infty)$, the weight equals one.
\end{proof}

\begin{remark}\label{Rem2.4}
	In this paper, we use $F=f^{\frac{n}{n-1}}$ and $\theta=\frac{n-1}{2n-1}$. Under condition (B), $P=P_n$ defined in \eqref{Pn} is nonnegative and non-increasing for $0<q\leq2-\frac{1}{n}$, since $g^-$ is non-increasing and $f$ is positive and non-decreasing on $(0,+\infty)$. 
	For the critical case with $f(t)>0$ for $t>0$, the positive-weight formula \eqref{positive_weighted_J_definition} uses $G=g^{\frac{n}{n-1}}$ under condition (A), or $G=(g^+)^{\frac{n}{n-1}}$ under condition (B). The companion $k$-Hessian paper uses the same lemma with $F=f$, $\theta=\frac{1}{2}$, and its corresponding weight coefficients.
\end{remark}

\section{Local Existence for the Radial Cauchy Problem}\label{Section3}
This section establishes local solvability under conditions (A) and (B). For each $a>0$, we seek a solution on $[0,R]$ for some $R>0$ of the Cauchy problem
\begin{equation}\label{CP}
\left\{
\begin{aligned}
&\varphi^\prime(r)=C_nr^{2-n}\left(\int_0^r s^{n-1}h^{\frac{n}{n-1}}(s)ds\right)^{\frac{n-1}{n}},\quad r\in(0,R],\\
&\varphi(0)=a,\quad \varphi^\prime(0)=0,
\end{aligned}
\right.
\end{equation}
where $C_n=\frac{n^{\frac{n-1}{n}}}{n-1}$ and $h(r)=f(\varphi(r))+g(\varphi(r))|\varphi^\prime(r)|^q$. We adapt the Euler method to an integral functional that depends on both $\varphi$ and $\varphi^\prime$. The regular case carries out the compactness argument; the singular case requires a power transformation and positive lower slope bounds. In the gradient-degenerate case, the divergence structure reduces \eqref{CP} to the scalar Cauchy problem treated in \cite[Theorem 1.1]{JTX2026}.

We now establish the local existence of \eqref{CP} near $r=0$ under condition (A).

\begin{theorem}[Local Existence under Condition (A)]\label{Th3.1}
Assume that $f$ and $g$ satisfy condition (A), and let $q\in\mathbb R$. If $q>0$, assume additionally that either $f(t)>0$ for all $t>0$, or that $0<q<1$ and $f\equiv0$. Then, for any initial value $a>0$, there exists $R>0$ such that \eqref{CP} admits a positive solution $\varphi\in C^1[0,R]\cap C^2(0,R]$ satisfying
$\varphi^\prime(r)>0$ and $\varphi^{\prime\prime}(r)\geq0$ for all $r\in(0,R]$. Moreover,
\begin{itemize}
\item[(i)] If $q\geq0$, then $\varphi\in C^2[0,R]$.
\item[(ii)] If $q<0$, then $\varphi\in C^2(0,R]\cap W^{2,\delta}[0,R]$ for $\delta\in[1,\frac{n(1-q)}{1-nq})$.
\end{itemize}
\end{theorem}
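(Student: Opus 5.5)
We split into three regimes, matching the paper's outline: the regular case ($q\geq0$ with $f(a)>0$, or $q=0$ with $f(a)+g(a)>0$), the singular case $q<0$, and the degenerate case $0<q<1$, $f\equiv0$. In every regime we first produce some $\varphi\in C^1[0,R]\cap C^2(0,R]$ solving \eqref{mVolterra} with $\varphi(0)=a$ and $\varphi^\prime(0)=0$, together with $\varphi^\prime>0$ near the origin. Remark \ref{Rem2.2} then gives $\varphi^\prime>0$ and $\varphi^{\prime\prime}\geq0$ on $(0,R]$ and $h=h^+>0$, so \eqref{mVolterra} reduces to \eqref{CP}. The regularity claims (i)--(ii) follow from Lemma \ref{Lem2.2}~(i)--(iv) according to the case.

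\emph{Regular case.} We use an Euler scheme for the coupled system in $(\varphi,p)$ with $p=\varphi^\prime$. Fix a mesh size $1/m$. On each step, freeze the value of $h$ computed from the previous node, set $p_m$ by the integral formula \eqref{mVolterra} with this piecewise-constant $h$, and integrate to get $\varphi_m$.

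1. Choose $R$ small so that $\varphi_m\in[a,a+1]$ and $0\leq p_m\leq 1$ on $[0,R]$. This is possible because $h^{+}$ is then bounded by $M:=\max_{[a,a+1]}(|f|+|g|)$, and \eqref{mVolterra} gives $p_m(r)\leq C_n M r/n^{(n-1)/n}$.

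2. The same formula gives an equi-Lipschitz bound on $p_m$. Indeed, $r^{2-n}(\int_0^r s^{n-1}H ds)^{(n-1)/n}$ is Lipschitz with constant depending only on $\sup H$.

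3. Arzelà--Ascoli yields a subsequence with $\varphi_m\to\varphi$ in $C^1$.

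4. Continuity of $f$, $g$ and of $|\cdot|^q$ for $q\geq0$ lets us pass to the limit in the integral equation. Here the convention $|p|^0=1$ is used at $q=0$.

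5. Positivity of $\varphi^\prime$ near the origin then comes from $h(0^+)=f(a)$ or $f(a)+g(a)>0$, exactly as in Lemma \ref{Lem2.2}~(i)--(ii).

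\emph{Singular case $q<0$.} Here $|p|^q$ blows up at $p=0$, so the scheme must keep slopes bounded below. The natural substitution is the power transformation $w=(r^{n-2}p)^{n(1-q)/(n-1)}$ from the proof of Lemma \ref{Lem2.2}~(iii). Multiplying \eqref{Volterra_differential} by $(r^{n-2}p)^{-nq/(n-1)}$ turns the singular factor into a bounded coefficient of the form $(g(\varphi)+f(\varphi)p^{-q})^{n/(n-1)}$, up to bounded factors (with $-q>0$). The identity to integrate is $w^\prime=c\,r^{\,n-1+\frac{(2-n)nq}{n-1}}(g(\varphi)+f(\varphi)p^{-q})^{n/(n-1)}$, roughly.

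1. If $g(a)>0$, this coefficient is bounded and positive near $0$. Running Euler on $w$ gives the lower bound $p\gtrsim r^{1/(1-q)}$, uniformly in $m$, and compactness goes through as before.

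2. If $g(a)=0$, then $f(a)>0$. In this subcase we instead use \eqref{Volterra} directly, since $h\geq f(\varphi)\geq f(a)$ gives $p\geq f(a)r/(n-1)$ and hence $p^q\leq Cr^q$. This makes $h^{n/(n-1)}$ locally integrable against $s^{n-1}$, and the Euler scheme converges with these uniform lower slope bounds.

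3. These positive lower slope bounds are what let the limit satisfy the equation with the negative power. Lemma \ref{Lem2.2}~(iv) then gives the $W^{2,\delta}$ regularity.

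\emph{Degenerate case $f\equiv0$, $0<q<1$.} Constant functions solve the problem, so plain compactness may select $\varphi\equiv a$. We therefore again pass to $w=(r^{n-2}\varphi^\prime)^{n(1-q)/(n-1)}$, which satisfies \eqref{degenerate_direct_identity}. This reduces \eqref{CP} to a scalar Cauchy problem for $\varphi$ with right-hand side driven by $g(\varphi)^{n/(n-1)}$, to which \cite[Theorem 1.1]{JTX2026} applies. It yields a solution with $w>0$ for $r>0$, i.e.\ an immediately increasing branch. Lemma \ref{Lem2.2}~(iii) gives $\varphi\in C^2[0,R]$. The main obstacle, shared with the singular case, is securing a uniform positive lower bound on the approximate slopes near $r=0$ so that the limit is nontrivial and satisfies the equation.
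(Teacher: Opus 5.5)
The regular case, the singular subcase $g(a)>0$, and the degenerate case follow the paper's route. The genuine gap is in the singular subcase $q<0$, $g(a)=0$.

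\textbf{The gap.} In that subcase you drop the power transformation and run the scheme on \eqref{Volterra}. You then use $p\geq\frac{f(a)}{n-1}r$ to bound $g(\varphi)p^q\leq Cr^q$. That bound only gives
\[
\int_0^r s^{n-1}h^{\frac{n}{n-1}}\,ds\lesssim r^n+\int_0^r s^{n-1+\frac{nq}{n-1}}\,ds .
\]
\begin{itemize}
\item For $q\leq1-n$ the right-hand side is infinite.
\item For $1-n<q\leq-1$ it yields only $p(r)\lesssim r+r^{1+q}$. This does not tend to $0$ as $r\to0^+$, and it is unbounded when $q<-1$.
\end{itemize}
So for $q\leq-1$ your argument gives no uniform bound on the approximate slopes, no equicontinuity at the origin, and no $\varphi'(0)=0$. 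It does work for $-1<q<0$.

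The failure is real, not an artifact of a crude estimate, because $g(a)=0$ only says $g(\varphi(r))\to0$, at an arbitrarily slow rate. Take $n=2$, $f\equiv1$, $g(t)=((t-a)^+)^\beta$, and your frozen scheme with mesh $\Delta$, using the most favourable convention $h=f(a)$ at $r=0$.
\begin{itemize}
\item On $[0,\Delta]$ you get $p_m=r$.
\item The frozen value on $[\Delta,2\Delta]$ is $h\geq2^{-\beta}\Delta^{2\beta+q}$.
\item Hence $p_m(2\Delta)\geq\sqrt3\,2^{-\beta}\Delta^{1+2\beta+q}\to+\infty$ as $\Delta\to0$ whenever $q<-1-2\beta$.
\end{itemize}
Since $p_m$ is non-decreasing when $n=2$, the slopes blow up on all of $[2\Delta,R]$.

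\textbf{The fix.} Keep the transformed functional $\mathcal G_2$ in this subcase too.
\begin{itemize}
\item Its integrand $\bigl[f(v)|v'|^{-q}+g(v)\bigr]^{\frac{n}{n-1}}$ is bounded by $M^{\frac{n}{n-1}}$ whenever $0\leq v'\leq1$, for every $q<0$. This gives the uniform bound \eqref{Euler_upper_II} and a common H\"older modulus.
\item The price is that when $g(a)=0$ this integrand vanishes on a constant segment. A transformed scheme started from $\psi\equiv a$ therefore never leaves $a$.
\item So you must impose the lower slope bound by hand: use a linear initial segment and slopes $\max\bigl\{\mathcal G_2(r_{i-1},\psi),\frac{f(a)}{2(n-1)}r_i\bigr\}$.
\item By \eqref{Euler_lower_II_zero}, $\mathcal G_2(r,v)\geq\frac{f(a)}{2(n-1)}r$ whenever $v'(s)\geq\frac{f(a)}{2(n-1)}s$. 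Hence the maximum changes the slope by at most $\frac{f(a)}{2(n-1)}(r_i-r_{i-1})$, and the scheme stays consistent.
\item The bound $\varphi'(r)\geq\frac{f(a)}{2(n-1)}r$ passes to the limit. This is what lets you undo the transformation and recover \eqref{CP}.
\end{itemize}

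\textbf{Two smaller corrections elsewhere.}
\begin{itemize}
\item $r^{2-n}\bigl(\int_0^r s^{n-1}H\,ds\bigr)^{\frac{n-1}{n}}$ is Lipschitz only if $H$ also has a positive lower bound, not just an upper one. In the regular case this is available, since $h\geq f(a)$, respectively $h\geq f(a)+g(a)$.
\item In the singular case the slopes behave like $r^{\frac{1}{1-q}}$ near $0$. So "compactness as before" must use equicontinuity (a H\"older modulus) rather than an equi-Lipschitz bound.
\end{itemize}
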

\begin{proof}
Let constants $l,\eta>0$ be determined later. Define
$$
\mathcal R:=\left\{(r,v)\in[0,l]\times PC^1[0,l]:v(0)=a,\ a\leq v(s)\leq a+\eta,\ 0\leq v^\prime(s)\leq1,\ s\in(0,l)\right\},
$$
where $PC^1[0,l]$ denotes the space of continuous and piecewise continuously differentiable functions on $[0,l]$.
We treat the three cases separately.

\textbf{Case I (Regular case): $q=0$, or $q>0$ and $f>0$.}

\textit{Step 1: We define the Euler break line $\psi$.}
Fix $\eta>0$.
Choose $l>0$ such that
\begin{equation}\label{Euler_choice_I}
\frac{f(a+\eta)+g(a+\eta)}{n-1}l\leq1,\quad \frac{f(a+\eta)+g(a+\eta)}{n-1}l^2\leq\eta.
\end{equation}
For $(r,v)\in\mathcal R$, define
\begin{equation}\label{Euler_functional_I}
\mathcal G_1(r,v):=C_nr^{2-n}\left(\int_0^r s^{n-1}\left[f(v(s))+g(v(s))|v^\prime(s)|^q\right]^\frac n{n-1} ds\right)^{\frac{n-1}n},
\end{equation}where $C_n=\frac{n^{\frac{n-1}{n}}}{n-1}$, and set $\mathcal G_1(0,v):=0$.
If $q=0$, condition (A) gives $f(t)+g(t)>0$ for $t>0$. If $q>0$, then $f(t)>0$ for $t>0$. Hence
$$
\mathcal G_1(r,v)>0,\quad r>0,\quad (r,v)\in\mathcal R.
$$
Let $M:=f(a+\eta)+g(a+\eta)$.
Since $f$ and $g$ are nonnegative and non-decreasing, $0\leq v^\prime\leq1$, and $a\leq v\leq a+\eta$, we have
$$
0\leq f(v(s))+g(v(s))|v^\prime(s)|^q\leq M,\quad s\in(0,r).
$$
It follows that
\begin{equation}\label{Euler_bound_I}
0\leq\mathcal G_1(r,v)\leq \frac{M}{n-1}r,\quad (r,v)\in\mathcal R.
\end{equation}
In particular, \eqref{Euler_choice_I} gives
$$
0\leq\mathcal G_1(r,v)\leq1,\quad (r,v)\in\mathcal R.
$$
We define the Euler break line $\psi(\cdot)$ on $[0,l]$ by
\begin{equation}\label{Euler_break_line_I}
\begin{cases}
\psi(r)=a,&0\leq r\leq r_1,\\
\psi(r)=\psi(r_{i-1})+\mathcal G_1(r_{i-1},\psi)(r-r_{i-1}),&r_{i-1}<r\leq r_i,\quad i=2,\ldots,m,
\end{cases}
\end{equation}where $0=r_0<r_1<\cdots<r_m=l$ and $m\in \mathbb{N}$.
The construction is recursive, since $\mathcal G_1(r_{i-1},\psi)$ depends only on $\psi$ on $[0,r_{i-1}]$. Induction using \eqref{Euler_bound_I} and \eqref{Euler_choice_I} gives
$$
0\leq\psi^\prime(r)\leq\frac{Ml}{n-1}\leq1,\quad r\in(0,l),
$$
and
$$
0\leq\psi(r)-a=\int_0^r\psi^\prime(s)ds\leq\frac{Ml^2}{n-1}\leq\eta,\quad r\in[0,l].
$$
Thus $(r,\psi)\in\mathcal R$.

\textit{Step 2: We prove that $\psi$ is an $\varepsilon$-approximate solution.} To do this, we need to prove that for any small $\varepsilon>0$, we can choose appropriate points $\left\{r_i\right\}_{i=1, \cdots, m}$, such that
\begin{equation}\label{Euler_approximate}
|\psi^\prime(r)-\mathcal G_1(r,\psi)|<\varepsilon,\quad r\in(0,l).
\end{equation}
By \eqref{Euler_bound_I}, we can choose $\bar r\in(0,l)$ such that $\frac{M\bar r}{n-1}<\frac{\varepsilon}{2}$.
Set $r_1=\bar{r}$. Then
$$
|\psi^\prime(r)-\mathcal G_1(r,\psi)|
=\mathcal G_1(r, \psi)<\frac{\varepsilon}{2}, \quad 0 \leq r<\bar{r}. 
$$
For $\bar{r} \leq r \leq l$, without loss of generality, we assume $r_{i-1} \leq r \leq r_i$ for some $i=2, \cdots, m$.
Then
$$
\begin{aligned}
& |\psi^\prime(r)-\mathcal G_1(r,\psi)|\\
=& \left|\mathcal G_1\left(r_{i-1}, \psi\right)-\mathcal G_1(r, \psi)\right| \\
\leq& C_n\left|r_{i-1}^{-\frac{n(n-2)}{n-1}}\int_0^{r_{i-1}}s^{n-1}[f(\psi)+g(\psi)|\psi^\prime|^q]^{\frac{n}{n-1}}ds-r^{-\frac{n(n-2)}{n-1}}\int_0^rs^{n-1}[f(\psi)+g(\psi)|\psi^\prime|^q]^{\frac{n}{n-1}}ds\right|^{\frac{n-1}n}.
\end{aligned}
$$
Splitting the second integral at $r_{i-1}$ and using $0\leq f(\psi)+g(\psi)|\psi^\prime|^q\leq M$, we find
$$
\begin{aligned}
|\psi^\prime(r)-\mathcal G_1(r,\psi)|
\leq&\frac{M}{n-1}\left(r_{i-1}^n\left|r_{i-1}^{-\frac{n(n-2)}{n-1}}-r^{-\frac{n(n-2)}{n-1}}\right|+r^{-\frac{n(n-2)}{n-1}}(r^n-r_{i-1}^n)\right)^{\frac{n-1}n}\\
\leq&\frac{M}{n-1}\left(l^n\left|r_{i-1}^{-\frac{n(n-2)}{n-1}}-r^{-\frac{n(n-2)}{n-1}}\right|+\bar r^{-\frac{n(n-2)}{n-1}}(r^n-r_{i-1}^n)\right)^{\frac{n-1}n}.
\end{aligned}
$$
Since $\bar r\leq r_{i-1}<r\leq l$, the mean value theorem yields
\begin{align*}
\left|r_{i-1}^{-\frac{n(n-2)}{n-1}}-r^{-\frac{n(n-2)}{n-1}}\right|\leq\frac{n(n-2)}{n-1}\bar r^{-\frac{n(n-2)}{n-1}-1}(r-r_{i-1}),\quad
r^n-r_{i-1}^n\leq nl^{n-1}(r-r_{i-1}).
\end{align*}
For the above $\varepsilon>0$, we take
$$
0<\delta_1<\frac{\bar r^{\frac{n(n-2)}{n-1}+1}}{\frac{n(n-2)}{n-1}l^n+nl^{n-1}\bar r}\left(\frac{(n-1)\varepsilon}{2M}\right)^{\frac{n}{n-1}},
$$
Then for $\max\limits_{2\leq i\leq m}(r_i-r_{i-1})<\delta_1$, we have
$$
|\psi^\prime(r)-\mathcal G_1(r,\psi)|<\frac{\varepsilon}{2},\quad r\in(\bar r,l)
$$
Together with the estimate on $(0,\bar r)$, this proves \eqref{Euler_approximate}.

\textit{Step 3: We obtain a solution as a limit of Euler break lines.} Let $\varepsilon_j\to0^+$, and let $\psi_j$ be an $\varepsilon_j$-approximate solution constructed in \eqref{Euler_break_line_I}. By Step 2, we have
\begin{equation}\label{varepsilon_j}
|\psi_j^\prime(r)-\mathcal G_1(r,\psi_j)|<\varepsilon_j,\quad r\in(0,l).
\end{equation}
Since $0\leq\psi_j^\prime\leq1$ and $\psi_j(0)=a$, we have
$$
|\psi_j(r^{\prime\prime})-\psi_j(r^\prime)|\leq|r^{\prime\prime}-r^\prime|,\quad r^\prime,r^{\prime\prime}\in[0,l].
$$
Thus $\{\psi_j\}$ is uniformly bounded and uniformly Lipschitz continuous on $[0,l]$.
By the Arzelà--Ascoli theorem, there exists a uniformly convergent subsequence, which we still denote by $\psi_j$.
Assume that
$$
\lim\limits_{j \rightarrow+\infty} \psi_j=\varphi\quad\text{uniformly on }[0,l].
$$ In particular, $\varphi\in C[0,l]$, $\varphi(0)=a$, and $a\leq\varphi\leq a+\eta$.

Set
$$
\Delta_j(r):=\psi_j^\prime(r)-\mathcal G_1(r,\psi_j),\quad r\in(0,l).
$$
By \eqref{varepsilon_j}, $\|\Delta_j\|_{L^\infty(0,l)}\leq\varepsilon_j$.
Moreover, \eqref{Euler_approximate} holds for arbitrary $r_{i-1},r\in[\bar r,l]$, independently of the partition and of $j$. It gives a common modulus of continuity on every $[\bar r,l]$. Together with \eqref{Euler_bound_I} near $0$, it shows that $\{\mathcal G_1(\cdot,\psi_j)\}$ is uniformly bounded and equicontinuous on $[0,l]$.
Passing to a further subsequence, we obtain
$$
\lim\limits_{j \rightarrow+\infty} \mathcal G_1(\cdot,\psi_j)= p\quad\text{uniformly on }[0,l]
$$
for some $p\in C[0,l]$. Consequently,
$$
\|\psi_j^\prime-p\|_{L^\infty(0,l)}\leq\|\Delta_j\|_{L^\infty(0,l)}+\|\mathcal G_1(\cdot,\psi_j)-p\|_{C[0,l]}\to0.
$$
Integrating the definition of $\Delta_j$ and letting $j\to+\infty$, we obtain
$$
\varphi(r)=\lim_{j\to+\infty}\psi_j(r)=a+\lim_{j\to+\infty}\int_0^r\left[\mathcal G_1(s,\psi_j)+\Delta_j(s)\right]ds=a+\int_0^r p(s)ds,\quad r\in[0,l].
$$
Therefore $\varphi\in C^1[0,l]$ and $\varphi^\prime=p$. Moreover, $\varphi^\prime(0)=p(0)=0$.
The uniform convergence of $\psi_j$ and the $L^\infty$ convergence of $\psi_j^\prime$ imply
$$
f(\psi_j)+g(\psi_j)|\psi_j^\prime|^q\to f(\varphi)+g(\varphi)|\varphi^\prime|^q\quad\text{in }L^\infty(0,l).
$$
Passing to the limit in \eqref{Euler_functional_I} gives
$$
\varphi^\prime(r)=\mathcal G_1(r,\varphi),\quad r\in[0,l],\qquad \varphi(0)=a.
$$
In Case I, the integrand defining $\mathcal G_1(r,\varphi)$ is positive for $r>0$. Hence $\varphi^\prime(r)>0$ for $r\in(0,l]$. Differentiating the integral expression shows that $\varphi\in C^2(0,l]$ and satisfies \eqref{CP}. Lemma \ref{Lem2.2}~(i)--(ii) gives $\varphi\in C^2[0,l]$, and Lemma \ref{Lem2.3}~(ii) gives $\varphi^{\prime\prime}(r)\geq0$ for $r\in(0,l]$. This completes Case I.

\textbf{Case II (Singular case): $q<0$.}

\textit{Step 1: We define the Euler break line $\psi$.} Fix $\eta>0$, set $\alpha:=\frac{1}{1-q}\in(0,1)$, and use $M=f(a+\eta)+g(a+\eta)$ as in Case I. Choose $l\in(0,1]$ such that
\begin{equation}\label{Euler_choice_II}
\left[M(\alpha+n-2)^{-\frac{n-1}{n}}(n-1)^{-\frac{1}n}\right]^\alpha l^\alpha\leq\min\{1,\eta\}.
\end{equation}
For $(r,v)\in\mathcal R$, define
\begin{equation}\label{Euler_functional_II}
\mathcal G_2(r,v):=\widetilde C_nr^{2-n}\left(\int_0^r s^{n-1+\frac{(2-n)nq}{n-1}}\left[f(v(s))|v^\prime(s)|^{-q}+g(v(s))\right]^{\frac{n}{n-1}}ds\right)^{\frac{n-1}{n(1-q)}},
\end{equation}
where $\widetilde C_n:=\left(n(1-q)(n-1)^{-\frac{n}{n-1}}\right)^{\frac{n-1}{n(1-q)}}$, and set $\mathcal G_2(0,v):=0$. Since $-q>0$, the transformed integrand is continuous even where $v^\prime=0$. For $0\leq v^\prime\leq1$, we have $0\leq f(v)|v^\prime|^{-q}+g(v)\leq M$. Moreover,
$$
\int_0^r s^{n-1+\frac{(2-n)nq}{n-1}}ds=\frac{(n-1)\alpha}{n(\alpha+n-2)}r^{\frac{n(\alpha+n-2)}{(n-1)\alpha}}.
$$
Substitution into \eqref{Euler_functional_II} gives
\begin{equation}\label{Euler_upper_II}
0\leq\mathcal G_2(r,v)\leq\left[M(\alpha+n-2)^{-\frac{n-1}{n}}(n-1)^{-\frac{1}n}\right]^\alpha r^\alpha,\quad (r,v)\in\mathcal R.
\end{equation}

Suppose first that $g(a)>0$. Since $g(v(s))\geq g(a)$, the same calculation gives
\begin{equation}\label{Euler_lower_II_positive}
\mathcal G_2(r,v)\geq\left[g(a)(\alpha+n-2)^{-\frac{n-1}{n}}(n-1)^{-\frac{1}n}\right]^\alpha r^\alpha.
\end{equation}
Define
\begin{equation}\label{Euler_break_line_II_positive}
\begin{cases}
\psi(r)=a,&0\leq r\leq r_1,\\
\psi(r)=\psi(r_{i-1})+\mathcal G_2(r_{i-1},\psi)(r-r_{i-1}),&r_{i-1}<r\leq r_i,\quad i=2,\ldots,m,
\end{cases}
\end{equation}
where $0=r_0<r_1<\cdots<r_m=l$. By \eqref{Euler_choice_II} and \eqref{Euler_upper_II}, $0\leq\psi^\prime\leq1$ and $0\leq\psi-a\leq l\eta\leq\eta$. Thus $(r,\psi)\in\mathcal R$.

Suppose next that $g(a)=0$. Condition (A) gives $f(a)>0$.
A constant initial segment would keep the transformed Euler approximation constant, so we impose a linear lower slope bound.
Choose $l>0$ so small that, in addition to \eqref{Euler_choice_II}, $\frac{f(a)}{2(n-1)}l\leq1$ and $l\leq\eta$. If $v^\prime(s)\geq\frac{f(a)}{2(n-1)}s$, then monotonicity of $f$ gives
$$
f(v(s))|v^\prime(s)|^{-q}+g(v(s))\geq f(a)\left(\frac{f(a)}{2(n-1)}\right)^{-q}s^{-q}.
$$
Since
$$
\int_0^r s^{n-1+\frac{(2-n)nq}{n-1}-\frac{nq}{n-1}}ds=\frac{r^{n(1-q)}}{n(1-q)},
$$
equation \eqref{Euler_functional_II} yields
\begin{equation}\label{Euler_lower_II_zero}
\mathcal G_2(r,v)\geq\left(\frac{f(a)}{n-1}\right)^\alpha\left(\frac{f(a)}{2(n-1)}\right)^{-q\alpha}r=2^\alpha\frac{f(a)}{2(n-1)}r\geq\frac{f(a)}{2(n-1)}r.
\end{equation}
Define
\begin{equation}\label{Euler_break_line_II_zero}
\begin{cases}
\psi(r)=a+\frac{f(a)}{2(n-1)}r_1r,&0\leq r\leq r_1,\\
\psi(r)=\psi(r_{i-1})+\max\left\{\mathcal G_2(r_{i-1},\psi),\frac{f(a)}{2(n-1)}r_i\right\}(r-r_{i-1}),&r_{i-1}<r\leq r_i,\quad i=2,\ldots,m,
\end{cases}
\end{equation}
It follows from \eqref{Euler_break_line_II_zero} and \eqref{Euler_lower_II_zero} that $\psi^\prime(r)\geq\frac{f(a)}{2(n-1)}r$. Moreover, \eqref{Euler_upper_II} and the choice of $l$ give $\psi^\prime\leq1$ and $a\leq\psi(r)\leq a+l\leq a+\eta$. Thus $(r,\psi)\in\mathcal R$.

\textit{Step 2: We prove that $\psi$ is an $\varepsilon$-approximate solution.} Fix $\varepsilon>0$. If $g(a)>0$, choose $\bar r\in(0,l)$ such that
$$
\left[M(\alpha+n-2)^{-\frac{n-1}{n}}(n-1)^{-\frac{1}n}\right]^\alpha\bar r^\alpha<\frac{\varepsilon}{2}
$$
and set $r_1=\bar r$. Since $\psi^\prime=0$ on $(0,\bar r)$, \eqref{Euler_upper_II} gives
$$
|\psi^\prime(r)-\mathcal G_2(r,\psi)|=\mathcal G_2(r,\psi)<\frac{\varepsilon}{2},\quad 0<r<\bar r.
$$
Let $\bar r\leq r_{i-1}<r\leq r_i\leq l$ for some $i=2,\ldots,m$. As in Case I, we split the integral at $r_{i-1}$ and use $|x^\theta-y^\theta|\leq|x-y|^\theta$ with $\theta=\frac{(n-1)\alpha}{n}\in(0,1)$. The bound $0\leq f(\psi)|\psi^\prime|^{-q}+g(\psi)\leq M$ then gives
$$
\begin{aligned}
&|\mathcal G_2(r,\psi)-\mathcal G_2(r_{i-1},\psi)|\\
&\leq\widetilde C_nM^\alpha\Bigg[\left(r_{i-1}^{-\frac{n(n-2)}{(n-1)\alpha}}-r^{-\frac{n(n-2)}{(n-1)\alpha}}\right)\int_0^{r_{i-1}}s^{n-1+\frac{(2-n)nq}{n-1}}ds+r^{-\frac{n(n-2)}{(n-1)\alpha}}\int_{r_{i-1}}^r s^{n-1+\frac{(2-n)nq}{n-1}}ds\Bigg]^\frac{(n-1)\alpha}{n}\\
&\leq \widetilde C_nM^\alpha\left(1+\frac{n-2}{\alpha+n-2}\right)^\frac{(n-1)\alpha}{n} l^{\frac{\alpha}{n}}(r-r_{i-1})^\frac{(n-1)\alpha}{n}.
\end{aligned}
$$
Choose
$$
0<\delta_2<\frac{1}{1+\frac{n-2}{\alpha+n-2}}\left(\frac{\varepsilon}{2\widetilde C_nM^\alpha l^{\frac{\alpha}{n}}}\right)^{\frac{n}{(n-1)\alpha}}.
$$
If $\max\limits_{2\leq i\leq m}(r_i-r_{i-1})<\delta_2$, the last estimate gives $|\psi^\prime(r)-\mathcal G_2(r,\psi)|<\varepsilon/2$ on the remaining intervals when $g(a)>0$.

If $g(a)=0$, choose $\bar r\in(0,l)$ such that
$$
\frac{f(a)}{2(n-1)}\bar r+\left[M(\alpha+n-2)^{-\frac{n-1}{n}}(n-1)^{-\frac{1}n}\right]^\alpha\bar r^\alpha<\frac{\varepsilon}{2}
$$
and set $r_1=\bar r$. On $(r_{i-1},r_i)$, \eqref{Euler_lower_II_zero} gives
$$
0\leq\psi^\prime(r)-\mathcal G_2(r_{i-1},\psi)\leq\frac{f(a)}{2(n-1)}(r_i-r_{i-1}).
$$
Choose
$$
0<\delta_3<\min\left\{\frac{1}{1+\frac{n-2}{\alpha+n-2}}\left(\frac{\varepsilon}{4\widetilde C_nM^\alpha l^{\frac{\alpha}{n}}}\right)^{\frac{n}{(n-1)\alpha}},\frac{\varepsilon(n-1)}{2f(a)}\right\}.
$$
If $\max\limits_{2\leq i\leq m}(r_i-r_{i-1})<\delta_3$, the same difference estimate yields
$$
|\psi^\prime(r)-\mathcal G_2(r,\psi)|\leq\frac{f(a)}{2(n-1)}(r_i-r_{i-1})+\widetilde C_nM^\alpha\left(1+\frac{n-2}{\alpha+n-2}\right)^{\frac{(n-1)\alpha}{n}}l^{\frac{\alpha}{n}}(r-r_{i-1})^{\frac{(n-1)\alpha}{n}}<\frac{\varepsilon}{2}.
$$
Therefore, in both subcases, the partition can be chosen so that
\begin{equation}\label{Euler_error_II}
|\psi^\prime(r)-\mathcal G_2(r,\psi)|<\varepsilon,\quad r\in(0,l).
\end{equation}

\textit{Step 3: We obtain a solution as a limit of Euler break lines.}
The difference estimate for $\mathcal G_2$ in Step 2 holds for arbitrary $0<r_{i-1}<r\leq l$, independently of the partition. Together with \eqref{Euler_upper_II}, it gives a common modulus of continuity on $[0,l]$, so the compactness argument of Case I applies.
Along a subsequence, $\psi_j\to\varphi$ and $\mathcal G_2(\cdot,\psi_j)\to p$ uniformly, while \eqref{Euler_error_II} gives $\psi_j^\prime\to p$ in $L^\infty(0,l)$. Hence $\varphi\in C^1[0,l]$, $\varphi^\prime=p$, $\varphi(0)=a$, and $\varphi^\prime(0)=0$. Since $-q>0$, passage to the limit in \eqref{Euler_functional_II} yields
\begin{equation}\label{Euler_limit_II}
\varphi^\prime(r)=\widetilde C_nr^{2-n}\left(\int_0^r s^{n-1+\frac{(2-n)nq}{n-1}}\left[f(\varphi(s))|\varphi^\prime(s)|^{-q}+g(\varphi(s))\right]^{\frac{n}{n-1}}ds\right)^{\frac{n-1}{n(1-q)}},\quad r\in(0,l].
\end{equation}
If $g(a)>0$, \eqref{Euler_lower_II_positive} gives $\varphi^\prime(r)\geq Dr^\alpha>0$, where $D$ is as in Remark \ref{Rem2.2-iv}. If $g(a)=0$, the lower slope bound gives $\varphi^\prime(r)\geq\frac{f(a)}{2(n-1)}r>0$.
Multiplying \eqref{Euler_limit_II} by $r^{n-2}$ and raising both sides to the power $\frac{n(1-q)}{n-1}$, we obtain
$$
\left(r^{n-2}\varphi^\prime(r)\right)^{\frac{n(1-q)}{n-1}}=n(1-q)(n-1)^{-\frac{n}{n-1}}\int_0^r s^{n-1+\frac{(2-n)nq}{n-1}}\left[f(\varphi(s))(\varphi^\prime(s))^{-q}+g(\varphi(s))\right]^{\frac{n}{n-1}}ds.
$$
Since $\varphi^\prime>0$, the integrand is continuous on $(0,l]$, and the identity implies that $\varphi^\prime\in C^1(0,l]$. Differentiating this identity and using $\varphi^\prime(r)>0$, we find
$$
\begin{aligned}
\frac{d}{dr}\left[\left(r^{n-2}\varphi^\prime(r)\right)^{\frac{n}{n-1}}\right]
&=n(n-1)^{-\frac{n}{n-1}}r^{n-1+\frac{(2-n)nq}{n-1}}\left(r^{n-2}\varphi^\prime(r)\right)^{\frac{nq}{n-1}}\left[f(\varphi(r))(\varphi^\prime(r))^{-q}+g(\varphi(r))\right]^{\frac{n}{n-1}}\\
&=n(n-1)^{-\frac{n}{n-1}}r^{n-1}\left[f(\varphi(r))+g(\varphi(r))(\varphi^\prime(r))^q\right]^{\frac{n}{n-1}}.
\end{aligned}
$$This recovers \eqref{Volterra_differential}. Integration from $0$ to $r$ then gives \eqref{CP}.
Thus $\varphi\in C^2(0,l]$, Lemma \ref{Lem2.3}~(ii) gives $\varphi^{\prime\prime}\geq0$, and Lemma \ref{Lem2.2}~(iv) gives
$\varphi\in W^{2,\delta}[0,l]$ for $1\leq\delta<\frac{n(1-q)}{1-nq}$.
This completes Case II.

\textbf{Case III (Degenerate case): $0<q<1$ and $f\equiv0$.} Under condition (A), we have $g(t)>0$ for all $t>0$. We construct an immediately increasing branch. On any such branch, $h(r)=g(\varphi(r))(\varphi^\prime(r))^q$, and multiplying \eqref{Volterra_differential} by $(1-q)(r^{n-2}\varphi^\prime(r))^{-\frac{nq}{n-1}}$ gives
$$
\frac{d}{dr}\left[\left(r^{n-2}\varphi^\prime(r)\right)^{\frac{n(1-q)}{n-1}}\right]=n(1-q)(n-1)^{-\frac{n}{n-1}}r^{n-1+\frac{(2-n)nq}{n-1}}g^{\frac{n}{n-1}}(\varphi(r)).
$$
The initial condition $\varphi^\prime(0)=0$ therefore suggests the transformed Cauchy problem
\begin{equation}\label{degenerate_CP}
\left\{
\begin{aligned}
&\varphi^\prime(r)=\widetilde C_nr^{2-n}\left(\int_0^r s^{n-1+\frac{(2-n)nq}{n-1}}g^{\frac{n}{n-1}}(\varphi(s))ds\right)^{\frac{n-1}{n(1-q)}},\quad r>0,\\
&\varphi(0)=a,\quad \varphi^\prime(0)=0,
\end{aligned}
\right.
\end{equation}where $\widetilde C_n:=\left[n(1-q)(n-1)^{-\frac{n}{n-1}}\right]^{\frac{n-1}{n(1-q)}}$.
To apply \cite[Theorem 1.1]{JTX2026}, denote the parameter called $q$ in that theorem by $\rho$, and set
$$
\rho=n-2,\quad \theta=\frac{n(1-q)}{n-1},\quad \tau=n+\frac{(2-n)nq}{n-1},\quad C=\widetilde C_n.
$$
Then
$$
\tau-\rho\theta=\frac{n}{n-1}>0,\quad \tau-\theta(\rho+1)=\frac{nq}{n-1}>0.
$$
Thus \cite[Theorem 1.1(i)]{JTX2026} gives $R_*>0$ and a solution $\varphi\in C^2[0,R_*)$ with $\varphi(r)\geq a$, $\varphi^\prime(r)>0$, $\varphi^{\prime\prime}(0)=0$, and $\varphi^{\prime\prime}(r)>0$ for $r\in(0,R_*)$. Since $\varphi^\prime>0$, multiplying \eqref{degenerate_CP} by $r^{n-2}$, taking the power $\frac{n(1-q)}{n-1}$, and differentiating recover \eqref{Volterra_differential}.
Integration from $0$ to $r$ gives \eqref{Volterra}, so this immediately increasing branch solves \eqref{CP}.
This completes Case III.

Taking $R=l$ in Cases I and II and choosing any $R\in(0,R_*)$ in Case III completes the proof.
\end{proof}

We next treat condition (B), where $g$ may change sign. Since $f(a)>0$, the negative part of $g$ can be controlled by restricting the slopes of the Euler break lines. This allows us to use the same construction as above.

\begin{theorem}[Local Existence under Condition (B)]\label{Th3.2}
Assume that $f$ and $g$ satisfy condition (B), and let $q>0$. Then, for any initial value $a>0$, there exists $R>0$ such that \eqref{CP} admits a positive solution $\varphi\in C^2[0,R]$ satisfying
$\varphi^\prime(r)>0$ and $\varphi^{\prime\prime}(r)\geq0$ for all $r\in(0,R]$.
\end{theorem}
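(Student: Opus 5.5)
We will run the Euler break-line scheme of Case I in the proof of Theorem \ref{Th3.1}, but with the positive-part functional from \eqref{mVolterra}. The only change is that the slope bound in $\mathcal R$ is now chosen to control $g^-$. Fix $\eta>0$ and set $M_1:=f(a+\eta)+|g(a)|+|g(a+\eta)|$. By monotonicity, this bounds $|f(v)|+|g(v)|$ whenever $a\le v\le a+\eta$. For $(r,v)\in\mathcal R$, define
$$
\mathcal G_3(r,v):=C_nr^{2-n}\left(\int_0^r s^{n-1}\bigl[\bigl(f(v(s))+g(v(s))|v^\prime(s)|^q\bigr)^+\bigr]^{\frac n{n-1}}ds\right)^{\frac{n-1}n},\qquad \mathcal G_3(0,v):=0.
$$
Since $0\le v^\prime\le1$ and $q>0$, the integrand is continuous in $v,v^\prime$ and bounded by $M_1$. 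Hence $0\le\mathcal G_3(r,v)\le \frac{M_1}{n-1}r$, exactly as in \eqref{Euler_bound_I}. Choose $l$ as in \eqref{Euler_choice_I} with $M$ replaced by $M_1$. The break line \eqref{Euler_break_line_I} then stays in $\mathcal R$.

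Steps 2 and 3 of Case I transfer verbatim, since they use only the bound $0\le(\cdot)^+\le M_1$ on the integrand. This gives $\varepsilon$-approximate solutions with partition-independent equicontinuity of $\mathcal G_3(\cdot,\psi_j)$, and then Arzel\`a--Ascoli. We need that $(x)^+$ is Lipschitz, and that $\psi_j\to\varphi$ uniformly with $\psi_j^\prime\to\varphi^\prime$ in $L^\infty$. Together with the continuity of $t\mapsto t^q$ on $[0,1]$, these give $h_j^+\to h^+$ in $L^\infty$. So the limit $\varphi\in C^1[0,l]$ solves \eqref{mVolterra} with $\varphi(0)=a$ and $\varphi^\prime(0)=0$.

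The remaining task is to return from \eqref{mVolterra} to \eqref{CP}. Since $\lim_{r\to0^+}h(r)=f(a)>0$, the integrand is positive near $0$. Hence $\varphi^\prime>0$ on $(0,l]$, and $\varphi\in C^2(0,l]$ by differentiating \eqref{mVolterra}, which gives \eqref{second_positive_part}. Lemma \ref{Lem2.2}~(i) gives $\varphi\in C^2[0,l]$ with $\varphi^{\prime\prime}(0)=f(a)/(n-1)$.

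The main point is positivity of $h$ on all of $(0,l]$. For this we invoke Remark \ref{Rem2.2}: the arguments of Lemma \ref{Lem2.3}~(i)--(iii) work on a finite interval $(0,R]$ under condition (B) with $f(a)>0$. They give $\varphi^{\prime\prime}\ge0$ and $h>0$ on $(0,l]$; the case $n=2$ is handled by the first-zero contradiction argument. Consequently $h^+=h$, \eqref{mVolterra} becomes \eqref{CP}, and taking $R=l$ finishes the proof.

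If one prefers to avoid relying on the remark, there is a more elementary route. Shrink $l$ so that $|g(a+\eta)|+|g(a)|\le f(a)/2$ times the reciprocal of $\sup(\psi^\prime)^q$. For instance, use the bound $\psi^\prime\le \frac{M_1 l}{n-1}$ and require $M_1\bigl(\frac{M_1l}{n-1}\bigr)^q\le f(a)/2$. Then $h\ge f(a)/2>0$ holds already along every Euler line and in the limit, so the positive part is never active.
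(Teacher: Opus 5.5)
Your proof is correct, but your main route differs from the paper's. You run the Euler scheme on the positive-part functional from \eqref{mVolterra} with the slope cap $v^\prime\le1$ and obtain a solution of the auxiliary equation. You then remove the positive part a posteriori, importing $h>0$ from Lemma \ref{Lem2.3}~(iii) via Remark \ref{Rem2.2}. That proof, including the first-zero argument for $n=2$, is indeed local, so it applies on $(0,l]$ without circularity.

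The paper instead builds positivity into the admissible class. It picks $P\in(0,1]$ with $g^-(a)P^q\le f(a)/2$, caps slopes by $P$, and uses the monotonicity of $f$ and $g$ to get $\frac{f(a)}{2}\le f(v)+g(v)|v^\prime|^q\le M$ for every $v$ in the class. So the positive part never becomes active, and the Euler limit solves \eqref{CP} directly. This also yields the explicit two-sided bound $\frac{f(a)}{2(n-1)}r\le\varphi^\prime(r)\le\frac{M}{n-1}r$. Only Lemma \ref{Lem2.2}~(i) and Lemma \ref{Lem2.3}~(ii) are invoked afterwards. This is essentially your ``more elementary route'' in the last paragraph; your cruder constant $M_1$ in place of $g^-(a)$ is harmless.

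The trade-off is as follows. The paper's version is self-contained at the level of the Euler scheme. It gives quantitative nondegeneracy of $\varphi^\prime$ at the origin without the delicate sign argument of Lemma \ref{Lem2.3}~(iii). Your main route keeps the Euler step oblivious to the sign of $g$, and its interval length is not constrained by $g^-(a)$. It also shows that the auxiliary formulation \eqref{mVolterra} prepared in Section \ref{Section2} suffices on its own. The cost is dependence on the monotonicity-based a posteriori positivity.
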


\begin{proof}
We use the improved Euler method and divide the proof into three steps.

\textit{Step 1: We define the Euler break line $\psi$.} Since $q>0$ and $f(a)>0$, choose $P\in(0,1]$ such that
$$
g^-(a)P^q\leq\frac{f(a)}{2},
$$
where $g^-(a):=\max\{-g(a),0\}$. Fix $\eta>0$, set $M:=f(a+\eta)+\max\{g(a+\eta),0\}P^q$,
and choose $l>0$ such that
\begin{equation}\label{Euler_choice_Pl}
\frac{M}{n-1}l\leq P,\quad \frac{M}{n-1}l^2\leq\eta.
\end{equation}
Define
$$
\mathcal R:=\left\{(r,v)\in[0,l]\times PC^1[0,l]:v(0)=a,\ a\leq v(s)\leq a+\eta,\ 0\leq v^\prime(s)\leq P,\ s\in(0,l)\right\}.
$$
For $(r,v)\in\mathcal R$ and $r>0$, let
\begin{equation}\label{Euler_functional_B}
\mathcal G_B(r,v):=C_nr^{2-n}\left(\int_0^r s^{n-1}\left[f(v(s))+g(v(s))|v^\prime(s)|^q\right]^{\frac{n}{n-1}}ds\right)^{\frac{n-1}n},
\end{equation}
where $C_n=\frac{n^{\frac{n-1}{n}}}{n-1}$, and set $\mathcal G_B(0,v):=0$. Since $f$ and $g$ are non-decreasing, the restrictions defining $\mathcal R$ give
$$
\frac{f(a)}{2}\leq f(v(s))+g(v(s))|v^\prime(s)|^q\leq M,\quad s\in(0,l).
$$
It follows that
\begin{equation}\label{Euler_functional_B_bound}
\frac{f(a)}{2(n-1)}r\leq\mathcal G_B(r,v)\leq\frac{M}{n-1}r,\quad (r,v)\in\mathcal R.
\end{equation}
For a partition $0=r_0<r_1<\cdots<r_m=l$, define
\begin{equation}\label{Euler_break_line_B}
\begin{cases}
\psi(r)=a,&0\leq r\leq r_1,\\
\psi(r)=\psi(r_{i-1})+\mathcal G_B(r_{i-1},\psi)(r-r_{i-1}),&r_{i-1}<r\leq r_i,\quad i=2,\ldots,m.
\end{cases}
\end{equation}
By \eqref{Euler_functional_B_bound} and \eqref{Euler_choice_Pl},
$$
0\leq\psi^\prime(r)\leq\frac{Ml}{n-1}\leq P,\quad r\in(0,l),
$$
and
$$
0\leq\psi(r)-a=\int_0^r\psi^\prime(s)ds\leq\frac{Ml^2}{n-1}\leq\eta,\quad r\in[0,l].
$$
Thus $(r,\psi)\in\mathcal R$.

\textit{Step 2: We prove that $\psi$ is an $\varepsilon$-approximate solution.} Fix $\varepsilon>0$. By \eqref{Euler_functional_B_bound}, choose $\bar r\in(0,l)$ such that $\frac{M\bar r}{n-1}<\frac{\varepsilon}{2}$, and take $r_1=\bar r$. On $(0,\bar r)$, we have $\psi^\prime=0$ and hence
$$
|\psi^\prime(r)-\mathcal G_B(r,\psi)|=\mathcal G_B(r,\psi)<\frac{\varepsilon}{2}.
$$
Let $\bar r\leq r_{i-1}<r<r_i\leq l$ for some $i=2,\ldots,m$. Splitting the integral at $r_{i-1}$ and applying the mean value theorem, we obtain
$$
\begin{aligned}
|\psi^\prime(r)-\mathcal G_B(r,\psi)|&=|\mathcal G_B(r_{i-1},\psi)-\mathcal G_B(r,\psi)|\\
&\leq\frac{M}{n-1}\left[r_{i-1}^n\left|r_{i-1}^{-\frac{n(n-2)}{n-1}}-r^{-\frac{n(n-2)}{n-1}}\right|+r^{-\frac{n(n-2)}{n-1}}(r^n-r_{i-1}^n)\right]^{\frac{n-1}n}\\
&\leq\frac{M}{n-1}\left[\bar r^{-\frac{n(n-2)}{n-1}-1}\left(\frac{n(n-2)}{n-1}l^n+nl^{n-1}\bar r\right)(r-r_{i-1})\right]^{\frac{n-1}n}.
\end{aligned}
$$
Choose $\delta_1>0$ such that
$$
\delta_1<\frac{\bar r^{\frac{n(n-2)}{n-1}+1}}{\frac{n(n-2)}{n-1}l^n+nl^{n-1}\bar r}\left(\frac{(n-1)\varepsilon}{2M}\right)^{\frac{n}{n-1}},
$$
and choose the remaining partition points so that $\max\limits_{2\leq i\leq m}(r_i-r_{i-1})<\delta_1$. The preceding estimate then gives $|\psi^\prime(r)-\mathcal G_B(r,\psi)|<\frac{\varepsilon}{2}$ on the remaining subintervals. Consequently,
\begin{equation}\label{Euler_approximation_B}
|\psi^\prime(r)-\mathcal G_B(r,\psi)|<\varepsilon,\quad r\in(0,l).
\end{equation}

\textit{Step 3: We obtain a solution as a limit of Euler break lines.} Let $\varepsilon_j\to0^+$ and let $\psi_j$ be the corresponding Euler break lines. By \eqref{Euler_functional_B_bound} and the estimates in Step 2, the compactness argument in Step 3 of the regular case under condition (A) applies. Thus, after passing to a subsequence, there exists $\varphi\in C^1[0,l]$ such that
$$
\|\psi_j-\varphi\|_{C[0,l]}+\|\psi_j^\prime-\varphi^\prime\|_{L^\infty(0,l)}\to0\quad\text{as }j\to+\infty,
$$
with $\varphi(0)=a$ and $\varphi^\prime(0)=0$. Since $q>0$ and $f,g$ are continuous, passing to the limit in \eqref{Euler_functional_B} and \eqref{Euler_approximation_B} gives $\varphi^\prime(r)=\mathcal G_B(r,\varphi)$ on $(0,l]$. Moreover, \eqref{Euler_functional_B_bound} yields
$$
\frac{f(a)}{2(n-1)}r\leq\varphi^\prime(r)\leq\frac{M}{n-1}r,\quad r\in(0,l].
$$
Differentiating the integral identity gives $\varphi\in C^2(0,l]$ and shows that $\varphi$ solves \eqref{CP}. Lemma \ref{Lem2.2}~(i) then yields $\varphi\in C^2[0,l]$ with $\varphi^{\prime\prime}(0)=\frac{f(a)}{n-1}>0$. Finally, Lemma \ref{Lem2.3}~(ii) gives $\varphi^{\prime\prime}\geq0$ on $(0,l]$. Taking $R=l$ completes the proof.
\end{proof}

\begin{remark}\label{Rem3.1}
Every local solution constructed above admits a maximal continuation on an interval $[0,R_\varphi)$, where $0<R_\varphi\leq+\infty$. The proof of Lemma \ref{Lem2.3}, applied on each finite subinterval, gives $\varphi^\prime>0$ and $\varphi^{\prime\prime}\geq0$ throughout this interval.

Suppose that $R_\varphi<+\infty$ and both $\varphi$ and $\varphi^\prime$ remain bounded as $r\to R_\varphi^-$. Their monotonicity gives finite limits $\lim\limits_{r\to R_\varphi^-}\varphi(r)\geq a$ and $\lim\limits_{r\to R_\varphi^-}\varphi^\prime(r)>0$. Moreover, $\lim\limits_{r\to R_\varphi^-}h(r)>0$ under condition \textup{(A)}, and the same conclusion under condition \textup{(B)} follows as in the proof of Lemma \ref{Lem2.3}~(iii). Thus \eqref{sODE} can be solved for $\varphi^{\prime\prime}$ with a continuous right-hand side near the limiting point, and Peano's theorem gives a continuation beyond $R_\varphi$, contradicting maximality. Consequently, if $R_\varphi<+\infty$, then
\begin{equation}\label{finite_radius_alternative}
\lim_{r\to R_\varphi^-}\varphi(r)=+\infty\quad\text{or}\quad\lim_{r\to R_\varphi^-}\varphi^\prime(r)=+\infty.
\end{equation}
\end{remark}

\section{Global Existence for the Radial Cauchy Problem}\label{Section4}

We derive two-sided derivative estimates for increasing convex radial solutions and use them to characterize global solvability of \eqref{CP}. Since these estimates depend on the sign of $g$, we treat conditions \textup{(A)} and \textup{(B)} separately. Throughout this section, we write $\theta_1:=\frac{n-1}{2n-1}$ and, for $q<2-\frac1n$, $\theta_2:=\frac{n-1}{2n-1-qn}$.

\subsection{Global Existence under Condition (A)}

We first work under condition (A). Raising \eqref{sODE} to the power $\frac{n}{n-1}$ leads naturally to estimates involving $f^{\frac{n}{n-1}}$ and $g^{\frac{n}{n-1}}$.

\begin{lemma}\label{Lemma4.2}
Assume that $f$ and $g$ satisfy condition (A) and that $q\leq2-\frac{1}n$. Let $R\in(0,+\infty]$ and suppose that $\varphi\in C^1[0,R)\cap C^2(0,R)$ is a solution of \eqref{sODE} satisfying $\varphi(0)=a>0$, $\varphi^\prime(0)=0$, $\varphi^\prime(r)>0$, and $\varphi^{\prime\prime}(r)\geq0$ for all $r\in(0,R)$.
Then, for all $r\in(0,R)$,
\begin{equation}\label{1prime_n_upper}
\varphi^{\prime}(r)\leq
\left\{
\begin{aligned}
&C_1\left[r^{\frac{1}{2n-1}}\left(\int_a^{\varphi(r)}f^{\frac{n}{n-1}}(t)dt\right)^{\theta_1}+r^{\frac{1}{2n-1-qn}}\left(\int_a^{\varphi(r)}g^{\frac{n}{n-1}}(t)dt\right)^{\theta_2}\right],&&q<2-\frac{1}n,\\ &\left(\Lambda_1r^{\frac{1}{n-1}}\int_a^{\varphi(r)}e^{\Lambda_1r^{\frac{1}{n-1}}\int_t^{\varphi(r)}g^{\frac{n}{n-1}}(s)ds}f^{\frac{n}{n-1}}(t)dt\right)^{\theta_1},&&q=2-\frac{1}n,
\end{aligned}
\right.
\end{equation}
and
\begin{equation}\label{1prime_n_lower}
\varphi^{\prime}(r)\geq
\left\{
\begin{aligned}
&C_2\left[r^{\frac{1}{2n-1}}\left(\int_a^{\varphi(r)}f^{\frac{n}{n-1}}(t)dt\right)^{\theta_1}+r^{\frac{1}{2n-1-qn}}\left(\int_a^{\varphi(r)}g^{\frac{n}{n-1}}(t)dt\right)^{\theta_2}\right],&&q<2-\frac{1}n,\\
&\left(\Gamma_1r^{\frac{1}{n-1}}\int_{\varphi(\frac{r}{2})}^{\varphi(r)}e^{\Gamma_1r^{\frac{1}{n-1}}\int_t^{\varphi(r)}g^{\frac{n}{n-1}}(s)ds}f^{\frac{n}{n-1}}(t)dt\right)^{\theta_1},&&q=2-\frac{1}n.
\end{aligned}
\right.
\end{equation}
Here $C_1,C_2>0$ are constants depending only on $n$ and $q$,  $\Lambda_1:=\frac{1}{\theta_1}\left(\frac{2}{n-1}\right)^{\frac{1}{n-1}}$, and  $\Gamma_1:=\frac{1}{\theta_1}(n-1)^{-\frac{n}{n-1}}2^{-\frac{1}{n-1}}$.

\end{lemma}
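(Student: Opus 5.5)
The plan is to turn \eqref{sODE} into a first-order differential inequality for a power of $\varphi^\prime$ and integrate it in the variable $t=\varphi(s)$, with $r$ frozen. Throughout I set $F:=f^{\frac{n}{n-1}}$ and $G:=g^{\frac{n}{n-1}}$. I use only $\varphi^\prime>0$, $\varphi^{\prime\prime}\geq0$, condition (A), so that $h=f(\varphi)+g(\varphi)(\varphi^\prime)^q>0$ and \eqref{second_positive_part} holds with $h^+=h$, and the monotonicity of $\varphi,\varphi^\prime$.

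\emph{Upper bound.} Since $\varphi^{\prime\prime}\geq0$, \eqref{second_positive_part} gives
$$
\varphi^{\prime\prime}\leq\varphi^{\prime\prime}+(n-2)\frac{\varphi^\prime}{s}=h^{\frac{n}{n-1}}\Bigl((n-1)\frac{\varphi^\prime}{s}\Bigr)^{-\frac{1}{n-1}}.
$$
Multiplying by $(\varphi^\prime)^{\frac{n}{n-1}}$ and using $h^{\frac{n}{n-1}}\leq2^{\frac{1}{n-1}}\bigl(F(\varphi)+G(\varphi)(\varphi^\prime)^{\frac{qn}{n-1}}\bigr)$, I get, for $s\leq r$,
$$
\frac{n-1}{2n-1}\,\frac{d}{ds}(\varphi^\prime)^{\frac{2n-1}{n-1}}\leq\Bigl(\frac{2}{n-1}\Bigr)^{\frac{1}{n-1}}r^{\frac{1}{n-1}}\Bigl(F(\varphi)+G(\varphi)(\varphi^\prime)^{\frac{qn}{n-1}}\Bigr)\varphi^\prime .
$$
This accounts for the constant $\Lambda_1$. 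I integrate over $(0,r)$ and substitute $t=\varphi(s)$, writing $X:=\varphi^\prime(r)$.

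For $0\leq q<2-\frac1n$, monotonicity of $\varphi^\prime$ bounds the $G$-term by $X^{\frac{qn}{n-1}}\int_a^{\varphi(r)}G$. This yields
$$
X^{\frac{2n-1}{n-1}}\leq Cr^{\frac{1}{n-1}}\Bigl(A+BX^{\frac{qn}{n-1}}\Bigr),\qquad A=\int_a^{\varphi(r)}F,\quad B=\int_a^{\varphi(r)}G .
$$
Since $\frac{qn}{n-1}<\frac{2n-1}{n-1}$, Young's inequality (or the elementary absorption $X^p\leq\alpha+\beta X^m\Rightarrow X\leq C(\alpha^{1/p}+\beta^{1/(p-m)})$) gives $X\leq C\bigl[(r^{\frac1{n-1}}A)^{\theta_1}+(r^{\frac1{n-1}}B)^{\theta_2}\bigr]$. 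These are exactly the stated powers of $r$.

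For $q<0$ the factor $(\varphi^\prime)^{\frac{qn}{n-1}}$ is decreasing in $s$, so it cannot be pulled out. Instead I argue by comparison: at a first point where $\varphi^\prime$ would exceed the candidate bound, the bound itself is a supersolution of the same inequality. Equivalently, I split $(0,r)$ into the set where $F$ dominates and the set where $G(\varphi^\prime)^{\frac{qn}{n-1}}$ dominates, and integrate $\frac{d}{ds}(\varphi^\prime)^{\frac{2n-1-qn}{n-1}}$ on the latter. I expect this to be the main obstacle.

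In the critical case $q=2-\frac1n$ one has $\frac{qn}{n-1}+1=\frac{2n-1}{n-1}$. Then $W:=(\varphi^\prime)^{\frac{2n-1}{n-1}}$, as a function of $t=\varphi$, satisfies the linear inequality
$$
\frac{dW}{dt}\leq\Lambda_1 r^{\frac{1}{n-1}}\bigl(F(t)+G(t)W\bigr),\qquad W(a)=0 .
$$
Gronwall's inequality gives exactly the exponential formula.

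\emph{Lower bound.} Here I discard the nonnegative term $(n-2)\varphi^\prime/s$ in the opposite direction by integrating the divergence identity \eqref{Volterra_differential} only over $[r/2,r]$. On this interval $s^{n-1}$ and $r^{n-2}$ are comparable to powers of $r$, and $\varphi^\prime(s)\leq\varphi^\prime(r)$. This gives a bound of the form
$$
(\varphi^\prime)^{\frac{1}{n-1}}\varphi^{\prime\prime}\geq c\,r^{\frac{1}{n-1}}h^{\frac{n}{n-1}}
$$
in an integrated sense on $[r/2,r]$. Multiplying by $\varphi^\prime$ and changing variables produces
$$
X^{\frac{2n-1}{n-1}}\geq c\,r^{\frac{1}{n-1}}\int_{\varphi(r/2)}^{\varphi(r)}\bigl(F+GW^{\frac{qn}{2n-1}}\bigr)\,dt .
$$
For $q=2-\frac1n$ the reverse Gronwall argument yields the $\Gamma_1$ formula. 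For $q<2-\frac1n$, I use $h^{\frac{n}{n-1}}\geq\max\{F,\;G(\varphi^\prime)^{\frac{qn}{n-1}}\}$. The $F$-part gives the first summand. For the $G$-part I integrate $\frac{d}{ds}(\varphi^\prime)^{\frac{2n-1-qn}{n-1}}\geq c\,r^{\frac1{n-1}}G(\varphi)\varphi^\prime$, which gives the second summand. Finally, the lower limit $\varphi(r/2)$ has to be replaced by $a$ in the power-type case. For this I use the doubling structure: iterating over the dyadic radii $r/2^k$ and summing, using monotonicity, recovers $\int_a^{\varphi(r)}$ up to a constant.
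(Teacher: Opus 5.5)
There are genuine gaps here: the upper bound for $q<0$ is left open, the $G$-part of the lower bound uses an unproved pointwise inequality, and the critical lower bound does not reach the stated $\Gamma_1$. The rest is sound. Your upper bound for $0\le q<2-\frac1n$ and the Gronwall argument for the critical upper bound coincide with the paper. Your way of obtaining the $F$-part of the lower bound is a legitimate variant of the paper's integrating factor $r^\alpha$: you integrate \eqref{Volterra_differential} over $[r/2,\rho]$, drop the endpoint term, and multiply by $\varphi^\prime(\rho)\ge\varphi^\prime(s)$.

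\textbf{The upper bound for $q<0$.} This is the main gap. The first-crossing comparison is not carried out. The splitting, as you describe it, differentiates $(\varphi^\prime)^{\frac{2n-1}{n-1}}$ on one set and $(\varphi^\prime)^{\frac{2n-1-qn}{n-1}}$ on the other, and these pieces do not telescope. The missing idea is a different multiplier. Multiply
$\varphi^{\prime\prime}(\varphi^\prime)^{\frac{1}{n-1}}\le\bigl(\frac{2s}{n-1}\bigr)^{\frac{1}{n-1}}\bigl[F(\varphi)+G(\varphi)(\varphi^\prime)^{\frac{qn}{n-1}}\bigr]$
by $(\varphi^\prime)^{1-\frac{qn}{n-1}}$ instead of by $\varphi^\prime$. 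Then:
\begin{enumerate}[label=(\alph*)]
\item the left side becomes $\frac{n-1}{2n-1-qn}\frac{d}{ds}(\varphi^\prime)^{\frac{2n-1-qn}{n-1}}$;
\item the $G$-term becomes $G(\varphi)\varphi^\prime$;
\item the $F$-term becomes $F(\varphi)\varphi^\prime\cdot(\varphi^\prime)^{-\frac{qn}{n-1}}$. Its extra factor is now non-decreasing because $q<0$, so it is bounded by $X^{-\frac{qn}{n-1}}$.
\end{enumerate}
Integrating over $(0,r)$ gives $X^{\frac{2n-1-qn}{n-1}}\le Cr^{\frac{1}{n-1}}\bigl(X^{-\frac{qn}{n-1}}A+B\bigr)$. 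Your absorption lemma with $p=\frac{2n-1-qn}{n-1}$ and $m=-\frac{qn}{n-1}\in(0,p)$ then yields exactly the two stated terms. Your splitting would close only if you differentiated this same power on both sets, and then it is superfluous.

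\textbf{The $G$-part of the lower bound.} You integrate $\frac{d}{ds}(\varphi^\prime)^{\frac{2n-1-qn}{n-1}}\ge c\,r^{\frac1{n-1}}G(\varphi)\varphi^\prime$, but this pointwise inequality is not supported by what you derived. Your estimate $(\varphi^\prime(\rho))^{\frac{n}{n-1}}\ge c\,r^{\frac1{n-1}}\int_{r/2}^{\rho}h^{\frac{n}{n-1}}ds$ is only integrated. For $n\ge3$ the term $(n-2)\varphi^\prime/s$ prevents reading off a pointwise lower bound for $\varphi^{\prime\prime}$. For $q\ge0$ one does exist (see below), but for $q<0$, where $h$ need not be monotone, it does not follow from \eqref{sODE}. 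Multiplying the integrated estimate by $(\varphi^\prime(\rho))^{1-\frac{qn}{n-1}}$ works only when $q\le\frac{n-1}{n}$. The fix is to keep the weight inside the derivative. The chain rule applied to \eqref{Volterra_differential} gives, with $\beta=\frac{(n-2)(2n-1-qn)}{n-1}$,
\[
\frac{d}{ds}\Bigl[\bigl(s^{n-2}\varphi^\prime\bigr)^{\frac{2n-1-qn}{n-1}}\Bigr]=(2n-1-qn)(n-1)^{-\frac{n}{n-1}}\bigl(s^{n-2}\varphi^\prime\bigr)^{\frac{n-1-qn}{n-1}}s^{n-1}h^{\frac{n}{n-1}}\ge(2n-1-qn)(n-1)^{-\frac{n}{n-1}}s^{\beta+\frac{1}{n-1}}G(\varphi)\varphi^\prime .
\]
Integrating over $[r/2,r]$ gives the second summand; this is the paper's $(r^\beta Y)^\prime$ step.

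\textbf{The constant $\Gamma_1$ in the critical case.} Your integrated route naturally gives the exponential lower bound with $n(n-1)^{-\frac{n}{n-1}}2^{-(n-1)}$ in place of $\Gamma_1$. That constant is strictly smaller, so you do not obtain the stated one. The paper gets $\Gamma_1$ from a pointwise inequality:
\begin{enumerate}[label=(\alph*)]
\item for $q\ge0$ the function $h$ is non-decreasing, so \eqref{Volterra} gives $\varphi^\prime(s)\le\frac{s}{n-1}h(s)$;
\item inserting this into \eqref{sODE} raised to the power $\frac{n}{n-1}$ yields $\varphi^{\prime\prime}(\varphi^\prime)^{\frac{1}{n-1}}\ge(n-1)^{-\frac{n}{n-1}}s^{\frac{1}{n-1}}h^{\frac{n}{n-1}}$;
\item the reverse Gronwall argument on $[r/2,r]$, using $s^{\frac1{n-1}}\ge(r/2)^{\frac1{n-1}}$, then gives exactly $\Gamma_1$.
\end{enumerate}

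\textbf{The lower limit $a$.} No dyadic iteration is needed to replace $\varphi(r/2)$ by $a$. Convexity gives $\varphi(r/2)-a\le\varphi(r)-\varphi(r/2)$. Since $F$ is non-decreasing, this gives $\int_a^{\varphi(r)}F\le2\int_{\varphi(r/2)}^{\varphi(r)}F$.
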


\begin{proof}
We first prove the upper bound \eqref{1prime_n_upper}. Since $\varphi^{\prime\prime}\geq0$, equation \eqref{sODE} gives
\begin{equation}\label{eq4.4}
\begin{aligned}
\varphi^{\prime\prime}(r)(\varphi^\prime(r))^{\frac{1}{n-1}}
&\leq\left(\frac{r}{n-1}\right)^{\frac{1}{n-1}}\left[f(\varphi(r))+g(\varphi(r))(\varphi^\prime(r))^q\right]^{\frac{n}{n-1}}\\
&\leq\left(\frac{2r}{n-1}\right)^{\frac{1}{n-1}}\left[f^{\frac{n}{n-1}}(\varphi(r))+g^{\frac{n}{n-1}}(\varphi(r))(\varphi^\prime(r))^{\frac{qn}{n-1}}\right].
\end{aligned}
\end{equation}

{\bfseries\boldmath Case 1: $q=0$.}
Multiplying \eqref{eq4.4} by $\varphi^\prime(r)$ and integrating from $0$ to $r$ yields
$$
(\varphi^\prime(r))^{\frac{2n-1}{n-1}}\leq\Lambda_1r^{\frac{1}{n-1}}\left[\int_a^{\varphi(r)}f^{\frac{n}{n-1}}(t)dt+\int_a^{\varphi(r)}g^{\frac{n}{n-1}}(t)dt\right].
$$
Taking the power $\theta_1$ proves \eqref{1prime_n_upper} for $q=0$.

{\bfseries\boldmath Case 2: $0<q<2-\frac{1}n$.}
Multiplying \eqref{eq4.4} by $\varphi^\prime(r)$ and integrating from $0$ to $r$, we obtain
$$
(\varphi^\prime(r))^{\frac{2n-1}{n-1}}\leq\Lambda_1r^{\frac{1}{n-1}}\int_a^{\varphi(r)}f^{\frac{n}{n-1}}(t)dt+\Lambda_1r^{\frac{1}{n-1}}(\varphi^\prime(r))^{\frac{qn}{n-1}}\int_a^{\varphi(r)}g^{\frac{n}{n-1}}(t)dt,
$$
where we used the monotonicity of $\varphi^\prime$, $\varphi(0)=a$, and $\varphi^\prime(0) = 0$.
Applying the $\varepsilon$-Young's inequality with $\varepsilon=\left(\frac{2n-1}{2qn}\right)^{\frac{qn}{2n-1}}$ to the second term on the right-hand side, we obtain
$$
\Lambda_1r^{\frac{1}{n-1}}(\varphi^\prime(r))^{\frac{qn}{n-1}}\int_a^{\varphi(r)}g^{\frac{n}{n-1}}(t)dt\leq\frac{1}{2}(\varphi^\prime(r))^{\frac{2n-1}{n-1}}+A_q\left(\Lambda_1r^{\frac{1}{n-1}}\int_a^{\varphi(r)}g^{\frac{n}{n-1}}(t)dt\right)^{\frac{2n-1}{2n-1-qn}},
$$where $A_q:=\frac{2n-1-qn}{2n-1}\left(\frac{2qn}{2n-1}\right)^{\frac{qn}{2n-1-qn}}$ is a constant depending only on $n$, $q$.
Consequently,
\begin{align*}
(\varphi^{\prime}(r))^{\frac{2n-1}{n-1}}
&\le 2\Lambda_1 r^{\frac{1}{n-1}} \int_a^{\varphi(r)} f^{\frac{n}{n-1}}(t) dt +2A_q \left(\Lambda_1 r^{\frac{1}{n-1}} \int_a^{\varphi(r)} g^{\frac{n}{n-1}}(t) dt \right)^{\frac{2n-1}{2n-1-qn}}.
\end{align*}
Raising both sides to the power $\theta_1\in(0,1)$ gives \eqref{1prime_n_upper} for $0<q<2-\frac{1}{n}$.

{\bfseries\boldmath Case 3: $q < 0$.} Set $\Lambda_2:=\frac{2n-1-qn}{n-1}\left(\frac{2}{n-1}\right)^{\frac{1}{n-1}}$.
Multiplying \eqref{eq4.4} by $(\varphi^\prime(r))^{1-\frac{qn}{n-1}}$, using the monotonicity of $(\varphi^\prime)^{-\frac{qn}{n-1}}$, and integrating from $0$ to $r$, we obtain
$$
(\varphi^\prime(r))^{\frac{2n-1-qn}{n-1}}\leq\Lambda_2r^{\frac{1}{n-1}}(\varphi^\prime(r))^{-\frac{qn}{n-1}}\int_a^{\varphi(r)}f^{\frac{n}{n-1}}(t)dt+\Lambda_2r^{\frac{1}{n-1}}\int_a^{\varphi(r)}g^{\frac{n}{n-1}}(t)dt.
$$
Applying the $\varepsilon$-Young's inequality with $\varepsilon=\left( \frac{2n-1-qn}{-2qn} \right)^{\frac{-qn}{2n-1-qn}}$ yields
$$
\Lambda_2r^{\frac{1}{n-1}}(\varphi^\prime(r))^{-\frac{qn}{n-1}}\int_a^{\varphi(r)}f^{\frac{n}{n-1}}(t)dt\leq\frac{1}{2}(\varphi^\prime(r))^{\frac{2n-1-qn}{n-1}}+B_q\left(\Lambda_2r^{\frac{1}{n-1}}\int_a^{\varphi(r)}f^{\frac{n}{n-1}}(t)dt\right)^{\frac{2n-1-qn}{2n-1}},
$$where $B_q:=\frac{2n-1}{2n-1-qn}\left(\frac{-2qn}{2n-1-qn}\right)^{-\frac{qn}{2n-1}}$ is a constant depending only on $n,q$.
Consequently,
\begin{align*}
(\varphi^{\prime}(r))^{\frac{2n-1-qn}{n-1}}
\le 2B_q \left( \Lambda_2 r^{\frac{1}{n-1}}\int_a^{\varphi(r)} f^{\frac{n}{n-1}}(t) dt \right)^{\frac{2n-1-qn}{2n-1}} + 2\Lambda_2 r^{\frac{1}{n-1}} \int_a^{\varphi(r)} g^{\frac{n}{n-1}}(t) dt.
\end{align*}
Raising both sides to the power of $\theta_2 \in (0,1)$ proves \eqref{1prime_n_upper} for $q < 0$.

{\bfseries\boldmath Case 4: $q = 2-\frac{1}{n}$.} Let $\psi(s):=(\varphi^\prime(s))^{\frac{2n-1}{n-1}}$. For $s\in(0,r]$, inequality \eqref{eq4.4} and $s^{\frac{1}{n-1}}\leq r^{\frac{1}{n-1}}$ imply
$$
\psi^\prime(s)-\Lambda_1r^{\frac{1}{n-1}}g^{\frac{n}{n-1}}(\varphi(s))\varphi^\prime(s)\psi(s)\leq\Lambda_1r^{\frac{1}{n-1}}f^{\frac{n}{n-1}}(\varphi(s))\varphi^\prime(s).
$$
Multiplying both sides by $e^{-\Lambda_1 r^{\frac{1}{n-1}} \int_a^{\varphi(s)} g^{\frac{n}{n-1}}(\tau) d \tau}$, we get
\begin{equation*}
\left( \psi(s) e^{-\Lambda_1 r^{\frac{1}{n-1}} \int_a^{\varphi(s)} g^{\frac{n}{n-1}}(\tau) d \tau} \right)^{\prime} \le \Lambda_1 r^{\frac{1}{n-1}} e^{-\Lambda_1 r^{\frac{1}{n-1}} \int_a^{\varphi(s)} g^{\frac{n}{n-1}}(\tau) d \tau} f^{\frac{n}{n-1}}(\varphi(s)) \varphi^{\prime}(s).
\end{equation*}
Integrating from $0$ to $r$, using $\psi(0) = (\varphi^{\prime}(0))^{\frac{2n-1}{n-1}}=0$, and multiplying by $e^{\Lambda_1 r^{\frac{1}{n-1}} \int_a^{\varphi(r)} g^{\frac{n}{n-1}}(\tau) d \tau}$, we obtain
\begin{equation*}
\psi(r) \le \Lambda_1 r^{\frac{1}{n-1}} \int_a^{\varphi(r)} e^{\Lambda_1 r^{\frac{1}{n-1}} \int_t^{\varphi(r)} g^{\frac{n}{n-1}}(\tau) d \tau} f^{\frac{n}{n-1}}(t) dt.
\end{equation*}
Substituting back $\psi(r) = (\varphi^{\prime}(r))^{\frac{2n-1}{n-1}}$ and raising both sides to the power of $\theta_1$ gives \eqref{1prime_n_upper} for $q = 2-\frac{1}{n}$.

We next prove \eqref{1prime_n_lower}. Raising \eqref{sODE} to the power $\frac{n}{n-1}$ gives
\begin{equation}\label{eq4.exp}
\varphi^{\prime\prime}(r)(\varphi^\prime(r))^{\frac{1}{n-1}}+\frac{n-2}{r}(\varphi^\prime(r))^{\frac{n}{n-1}}=\left(\frac{r}{n-1}\right)^{\frac{1}{n-1}}\left[f(\varphi(r))+g(\varphi(r))(\varphi^\prime(r))^q\right]^{\frac{n}{n-1}},\quad r>0.
\end{equation}We split the proof into two cases depending on $q$.

{\bfseries\boldmath Case 1: $q<2-\frac{1}n$.}
Multiplying \eqref{eq4.exp} by $\varphi^\prime(r)$ yields
\begin{equation}\label{eq4.linear}
\left((\varphi^\prime)^{\frac{2n-1}{n-1}}\right)^\prime+\frac{\alpha}{r}(\varphi^\prime)^{\frac{2n-1}{n-1}}\geq\widetilde C_1r^{\frac{1}{n-1}}\left[f^{\frac{n}{n-1}}(\varphi)\varphi^\prime+g^{\frac{n}{n-1}}(\varphi)(\varphi^\prime)^{\frac{qn}{n-1}+1}\right],
\end{equation}
where $\alpha:=\frac{(n-2)(2n-1)}{n-1}\geq0$ and $\widetilde C_1:=\frac{2n-1}{n-1}\left(\frac{1}{n-1}\right)^{\frac{1}{n-1}}$.
Since both terms on the right-hand side are non-negative, \eqref{eq4.linear} implies
\begin{align}
\left( (\varphi^{\prime}(r))^{\frac{2n-1}{n-1}} \right)^{\prime} + \frac{\alpha}{r}(\varphi^{\prime}(r))^{\frac{2n-1}{n-1}} &\ge\widetilde C_1 r^{ \frac{1}{n-1}} f^{\frac{n}{n-1}}(\varphi(r)) \varphi^\prime(r),\quad r>0,\label{eq4.8}\\
\left( (\varphi^{\prime}(r))^{\frac{2n-1}{n-1}} \right)^{\prime} + \frac{\alpha}{r}(\varphi^{\prime}(r))^{\frac{2n-1}{n-1}} &\ge \widetilde C_1 r^{ \frac{1}{n-1}} g^{\frac{n}{n-1}}(\varphi(r)) (\varphi^{\prime}(r))^{\frac{qn}{n-1} + 1},\quad r>0.\label{4.8}
\end{align}
Multiplying \eqref{eq4.8} by $r^\alpha$ and integrating over $[\frac{r}{2},r]$, we obtain
\begin{align*}
r^\alpha (\varphi^{\prime}(r))^{\frac{2n-1}{n-1}}
\ge\widetilde C_1 \int_{\frac{r}{2}}^r s^{2n-3} f^{\frac{n}{n-1}}(\varphi(s)) \varphi^\prime(s) ds \ge \widetilde C_1 \left(\frac{r}{2}\right)^{2n-3} \int_{\varphi(\frac{r}{2})}^{\varphi(r)} f^{\frac{n}{n-1}}(t) dt,\quad r>0.
\end{align*}
The convexity of $\varphi$ gives $\varphi(\frac{r}{2})-a\leq\varphi(r)-\varphi(\frac{r}{2})$. Together with the monotonicity of $f$, this implies
$$
\int_a^{\varphi(r)}f^{\frac{n}{n-1}}(t)dt\leq2\int_{\varphi(\frac{r}{2})}^{\varphi(r)}f^{\frac{n}{n-1}}(t)dt.
$$
Therefore,
\begin{equation}\label{lower_f_2}
\begin{split}
\varphi^{\prime}(r)
&\ge \left(\widetilde C_1 \left(\frac{1}{2}\right)^{2n-3}\right)^{\theta_1} r^{\frac{1}{2n-1}} \left( \int_{\varphi(\frac{r}{2})}^{\varphi(r)} f^{\frac{n}{n-1}}(t) dt \right)^{\theta_1}\\
&\ge \Gamma_2^{\theta_1} r^{\frac{1}{2n-1}} \left( \int_a^{\varphi(r)}f^{\frac{n}{n-1}}(t)dt \right)^{\theta_1},
\end{split}
\end{equation}where $\Gamma_2:=\widetilde C_1 2^{-(2n-2)}$.
Multiplying \eqref{4.8} by $\frac{2n-1-qn}{2n-1}(\varphi^{\prime}(r))^{-\frac{qn}{n-1}}$ gives
\begin{equation*}
\left((\varphi^{\prime}(r))^{\frac{2n-1-qn}{n-1}}\right)^\prime + \frac{\beta}{r} (\varphi^{\prime}(r))^{\frac{2n-1-qn}{n-1}} \ge \widetilde C_2 r^{\frac{1}{n-1}} g^{\frac{n}{n-1}}(\varphi(r)) \varphi^\prime(r),
\end{equation*}
where $\beta=\frac{(n-2)(2n-1-qn)}{n-1}\geq0$ and $\widetilde C_2=\frac{2n-1-qn}{n-1}\left(\frac{1}{n-1}\right)^{\frac{1}{n-1}}$.
Multiplying by $r^\beta$ and integrating over $[\frac{r}{2},r]$, we obtain
\begin{align*}
r^\beta (\varphi^{\prime}(r))^{\frac{2n-1-qn}{n-1}} \ge \widetilde C_2 \int_{\frac{r}{2}}^r s^{\beta + \frac{1}{n-1}} g^{\frac{n}{n-1}}(\varphi(s)) \varphi^\prime(s) ds \ge \widetilde C_2 \left(\frac{r}{2}\right)^{\beta + \frac{1}{n-1}} \int_{\varphi(\frac{r}{2})}^{\varphi(r)} g^{\frac{n}{n-1}}(t) dt,\quad r>0.
\end{align*}
The same convexity argument, together with the monotonicity of $g$, gives
\begin{equation}\label{lower_g_2}
\begin{split}
\varphi^{\prime}(r)
&\ge \left(\widetilde C_2 \left(\frac{1}{2}\right)^{\beta + \frac{1}{n-1}} \right)^{\theta_2} r^{\frac{1}{2n-1-qn}} \left( \int_{\varphi(\frac{r}{2})}^{\varphi(r)} g^{\frac{n}{n-1}}(t) dt \right)^{\theta_2}\\
&\ge \Gamma_3^{\theta_2} r^{\frac{1}{2n-1-qn}} \left( \int_a^{\varphi(r)}g^{\frac{n}{n-1}}(t)dt\right)^{\theta_2},
\end{split}
\end{equation}
where $\Gamma_3:=\widetilde C_2 2^{-\beta-\frac{n}{n-1}}$. Combining \eqref{lower_f_2} and \eqref{lower_g_2} proves \eqref{1prime_n_lower} for $q<2-\frac{1}n$ with
$$
C_2:=\frac{1}{2}\min\{\Gamma_2^{\theta_1},\Gamma_3^{\theta_2}\}>0.
$$

{\bfseries\boldmath Case 2: $q = 2-\frac{1}{n}$.}
Equation \eqref{sODE} is equivalent to
\begin{equation}\label{eq4.div}
\left[\left(r^{n-2}\varphi^\prime(r)\right)^{\frac{n}{n-1}}\right]^\prime = \frac{n}{(n-1)^{\frac{n}{n-1}}} r^{n-1} \left[ f(\varphi(r))+g(\varphi(r))(\varphi^\prime(r))^q \right]^{\frac{n}{n-1}},\quad r>0.
\end{equation}
Since $\varphi$, $\varphi^\prime$, $f$, and $g$ are non-decreasing, integrating \eqref{eq4.div} from $0$ to $r$ gives
\begin{align*}
\left(r^{n-2}\varphi^\prime(r)\right)^{\frac{n}{n-1}}
&= \frac{n}{(n-1)^{\frac{n}{n-1}}} \int_0^r s^{n-1} \left[ f(\varphi(s))+g(\varphi(s))(\varphi^\prime(s))^q \right]^{\frac{n}{n-1}} ds \\
&\le \frac{r^n}{(n-1)^{\frac{n}{n-1}}} \left[ f(\varphi(r))+g(\varphi(r))(\varphi^\prime(r))^q \right]^{\frac{n}{n-1}}.
\end{align*}
Hence
\begin{equation}\label{upper_grad}
(\varphi^\prime(r))^{\frac{n}{n-1}} \le \left(\frac{r}{n-1} \right)^{\frac{n}{n-1}}\left[ f(\varphi(r))+g(\varphi(r))(\varphi^\prime(r))^q \right]^{\frac{n}{n-1}}.
\end{equation}
Substituting \eqref{upper_grad} into \eqref{eq4.exp}, we obtain
\begin{align*}
\varphi^{\prime\prime}(r)(\varphi^\prime(r))^{\frac{1}{n-1}}
&\ge \left(\left(\frac{r}{n-1}\right)^{\frac{1}{n-1}} - \frac{n-2}{r} \left(\frac{r}{n-1} \right)^{\frac{n}{n-1}}\right) \left[ f(\varphi(r))+g(\varphi(r))(\varphi^\prime(r))^q \right]^{\frac{n}{n-1}}\\
&=\left(\frac{1}{n-1}\right)^\frac{n}{n-1} r^{\frac{1}{n-1}} \left[ f(\varphi(r))+g(\varphi(r))(\varphi^\prime(r))^q \right]^{\frac{n}{n-1}}\\
&\geq \left(\frac{1}{n-1}\right)^\frac{n}{n-1} r^{\frac{1}{n-1}} \left[ f^{\frac{n}{n-1}}(\varphi(r)) + g^{\frac{n}{n-1}}(\varphi(r)) (\varphi^{\prime}(r))^{\frac{qn}{n-1}} \right].
\end{align*}
Multiplying by $\varphi^\prime(r)$ gives
\begin{equation}\label{eq4.5}
\left( (\varphi^{\prime}(r))^{\frac{2n-1}{n-1}} \right)^{\prime}
\ge \widetilde C_3r^{\frac{1}{n-1}} \left[ f^{\frac{n}{n-1}}(\varphi(r)) \varphi^{\prime}(r) + g^{\frac{n}{n-1}}(\varphi(r)) (\varphi^{\prime}(r))^{\frac{qn}{n-1} + 1} \right],\quad r>0,
\end{equation}
where
$
\widetilde C_3=
\frac{2n-1}{n-1}
\left(\frac{1}{n-1}\right)^{\frac{n}{n-1}}.
$
Let $\psi(s) = (\varphi^{\prime}(s))^{\frac{2n-1}{n-1}}$. For $s\in[\frac{r}{2},r]$, we have
$s^{\frac{1}{n-1}}\geq\left(\frac{r}{2}\right)^{\frac{1}{n-1}}$.
Therefore,
\begin{equation*}
\psi^{\prime}(s) -\widetilde C_3 \left(\frac{r}{2}\right)^{\frac{1}{n-1}} g^{\frac{n}{n-1}}(\varphi(s)) \varphi^{\prime}(s) \psi(s) \ge\widetilde C_3 \left(\frac{r}{2}\right)^{\frac{1}{n-1}} f^{\frac{n}{n-1}}(\varphi(s)) \varphi^{\prime}(s).
\end{equation*}
Multiplying by $e^{-\widetilde C_3(\frac{r}{2})^{\frac{1}{n-1}} \int_a^{\varphi(s)} g^{\frac{n}{n-1}}(\tau) d\tau}$, we deduce
\begin{equation*}
\left( \psi(s) e^{-\widetilde C_3(\frac{r}{2})^{\frac{1}{n-1}} \int_a^{\varphi(s)} g^{\frac{n}{n-1}}(\tau) d\tau} \right)^{\prime} \ge\widetilde C_3 \left(\frac{r}{2}\right)^{\frac{1}{n-1}} e^{-\widetilde C_3(\frac{r}{2})^{\frac{1}{n-1}} \int_a^{\varphi(s)} g^{\frac{n}{n-1}}(\tau) d\tau} f^{\frac{n}{n-1}}(\varphi(s)) \varphi^{\prime}(s).
\end{equation*}
Integrating over $[\frac{r}{2},r]$ gives
\begin{align*}
\psi(r) e^{-\widetilde C_3(\frac{r}{2})^{\frac{1}{n-1}} \int_a^{\varphi(r)} g^{\frac{n}{n-1}}(\tau) d\tau} \ge\widetilde C_3 \left(\frac{r}{2}\right)^{\frac{1}{n-1}} \int_{\frac{r}{2}}^r e^{-\widetilde C_3(\frac{r}{2})^{\frac{1}{n-1}} \int_a^{\varphi(s)} g^{\frac{n}{n-1}}(\tau) d\tau} f^{\frac{n}{n-1}}(\varphi(s)) \varphi^{\prime}(s) ds.
\end{align*}
Multiplying both sides by $e^{\widetilde C_3(\frac{r}{2})^{\frac{1}{n-1}} \int_a^{\varphi(r)} g^{\frac{n}{n-1}}(\tau) d\tau}$, we obtain
\begin{align*}
\psi(r)
&\ge\widetilde C_3 \left(\frac{r}{2}\right)^{\frac{1}{n-1}} \int_{\varphi(\frac{r}{2})}^{\varphi(r)} e^{\widetilde C_3(\frac{r}{2})^{\frac{1}{n-1}} \int_t^{\varphi(r)} g^{\frac{n}{n-1}}(\tau) d\tau} f^{\frac{n}{n-1}}(t) dt\\
&=\Gamma_1r^{\frac{1}{n-1}}\int_{\varphi(\frac{r}{2})}^{\varphi(r)}e^{\Gamma_1r^{\frac{1}{n-1}}\int_t^{\varphi(r)}g^{\frac{n}{n-1}}(\tau)d\tau}f^{\frac{n}{n-1}}(t)dt.
\end{align*}
Substituting back $\psi(r) = (\varphi^{\prime}(r))^{\frac{2n-1}{n-1}}$ and taking the power $\theta_1$ proves \eqref{1prime_n_lower} for $q = 2-\frac{1}{n}$ and completes the proof.
\end{proof}

\begin{theorem}[Global Existence under Condition (A)]\label{Th4.2}
Assume that $f$ and $g$ satisfy condition (A), and let $q<2-\frac{1}n$. If $q>0$, assume additionally that either $f(t)>0$ for all $t>0$, or that $0<q<1$ and $f\equiv0$. For any initial value $a>0$, the Cauchy problem \eqref{CP} admits a positive global solution $\varphi\in C^1[0,+\infty)\cap C^2(0,+\infty)$ satisfying $\varphi^\prime(r)>0$ and $\varphi^{\prime\prime}(r)\geq0$ for all $r>0$ if and only if condition \eqref{KO_n} holds. Furthermore,
\begin{itemize}
\item[(i)] if $0\leq q<2-\frac{1}n$, then $\varphi\in C^2[0,+\infty)$;
\item[(ii)] if $q<0$, then $\varphi\in W_{\mathrm{loc}}^{2,\delta}[0,+\infty)$ for $1\leq\delta<\frac{n(1-q)}{1-nq}$.
\end{itemize}
\end{theorem}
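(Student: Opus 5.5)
The plan is to start from the local solution of Theorem \ref{Th3.1} and continue it to its maximal interval $[0,R_\varphi)$ as in Remark \ref{Rem3.1}. This continuation keeps $\varphi'>0$ and $\varphi''\ge0$. By \eqref{finite_radius_alternative}, global existence is equivalent to ruling out blow-up of $\varphi$ or $\varphi'$ at a finite radius. Both directions rest on the two-sided estimates of Lemma \ref{Lemma4.2} for $q<2-\frac1n$. Write
\[
\Phi(t):=\Bigl(\int_a^t f^{\frac{n}{n-1}}\Bigr)^{\theta_1}+\Bigl(\int_a^t g^{\frac{n}{n-1}}\Bigr)^{\theta_2}.
\]
For $r\le R$ and some $C_R$ depending on $R$, \eqref{1prime_n_upper} then gives $\varphi'(r)\le C_R\,\Phi(\varphi(r))$ when $r\geq1$ or in bounded ranges, and \eqref{1prime_n_lower} gives $\varphi'(r)\ge c\,r^{\gamma}\Phi(\varphi(r))$ with $\gamma=\min$ or $\max$ of $\frac1{2n-1}$ and $\frac1{2n-1-qn}$ as appropriate on $r\ge1$. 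I will also check that divergence of \eqref{KO_n} is unchanged when the inner lower limits $0$ are replaced by $a$. This holds because, for large $t$, $\int_0^t$ and $\int_a^t$ are comparable up to a factor 2 once $\int_a^t\ge\int_0^a$; if $\int_a^t f^{n/(n-1)}$ stays bounded the corresponding term contributes only a bounded quantity. Remark \ref{Rem1.KO-lower-limit} handles the outer lower limit.

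\emph{Sufficiency.} Assume \eqref{KO_n} and suppose $R_\varphi<\infty$. First, $\varphi$ must blow up. If $\varphi$ stayed bounded, then $\Phi(\varphi)$ would be bounded, so $\varphi'$ would be bounded by \eqref{1prime_n_upper}, contradicting \eqref{finite_radius_alternative}. Dividing the upper bound by $\Phi(\varphi)$ and integrating over $[r_0,R_\varphi)$ gives
\[
\int_{\varphi(r_0)}^{+\infty}\frac{dt}{\Phi(t)}\le C(R_\varphi)\,(R_\varphi-r_0)<\infty .
\]
Since $\Phi$ is comparable to the denominator in \eqref{KO_n}, this contradicts \eqref{KO_n}. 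Hence $R_\varphi=\infty$.

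\emph{Necessity.} Assume a global solution exists. By the lower bound, for $r\ge1$ we have $\varphi'(r)\ge c\,\Phi(\varphi(r))$, and $\varphi(r)\to\infty$ because $\varphi'$ is positive and nondecreasing. Integrating $\varphi'/\Phi(\varphi)\ge c$ over $[1,R]$ yields $\int_{\varphi(1)}^{\varphi(R)}dt/\Phi(t)\ge c(R-1)\to\infty$, which is \eqref{KO_n}. I expect the main obstacle to be the constant bookkeeping with the powers of $r$. On $[1,R]$ the factors $r^{1/(2n-1)}$ and $r^{1/(2n-1-qn)}$ are bounded above by powers of $R$ and below by $1$, so both directions only need comparability of $\Phi$ to the sum of the two terms up to constants depending on $n$, $q$ and $R$. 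The case $f\equiv0$ makes the first term vanish, and nothing breaks.

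\emph{Regularity.} For (i) and (ii), apply Lemma \ref{Lem2.2} on each $[0,R]$: parts (i) and (ii) of that lemma for the regular cases, part (iii) for the degenerate branch (whose local solution is immediately increasing by construction), and part (iv) for $q<0$, using $\varphi'>0$ and $\varphi''\ge0$. Away from the origin, $C^2$ regularity follows from differentiating \eqref{Volterra} with $h$ continuous and $\varphi'>0$.
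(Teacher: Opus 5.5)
Your proposal is correct and follows the paper's proof. You continue the local solution of Theorem \ref{Th3.1} maximally, use the upper bound \eqref{1prime_n_upper} first to force $\varphi\to+\infty$ and then to contradict \eqref{KO_n}, integrate the lower bound \eqref{1prime_n_lower} for necessity, and take regularity from Lemma \ref{Lem2.2}. The one cosmetic difference is how you pass from inner lower limit $a$ to $0$: you use $\int_0^t\le 2\int_a^t$ for large $t$, whereas the paper uses $\int_a^t\ge\int_0^{t/2}$ for $t\ge2A$ together with the substitution $t=2s$.
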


\begin{proof}
\underline{\textit{Sufficiency.}} Assume that \eqref{KO_n} holds. Suppose, by contradiction, that for some initial value $a>0$, \eqref{CP} has no global solution with the stated properties. By Theorem \ref{Th3.1}, \eqref{CP} has a positive local solution with $\varphi^\prime>0$ and $\varphi^{\prime\prime}\geq0$. Let $[0,R)$ be a maximal continuation of this solution, as in Remark \ref{Rem3.1}. For every $\bar R<R$, the solution belongs to $C^1[0,\bar R]\cap C^2(0,\bar R]$ and satisfies \eqref{CP}. The integral in \eqref{CP} is positive near the origin by the local construction and remains positive as $r$ increases.
Hence $\varphi^\prime(r)>0$ and $\varphi(r)\geq a>0$ on $(0,\bar R]$. Applying Lemma \ref{Lem2.3}~(ii) on $[0,\bar R]$, we also obtain $\varphi^{\prime\prime}(r)\geq0$ on $(0,\bar R]$. Since no global solution with these properties exists by assumption, $R<+\infty$.
If $\varphi$ remained bounded as $r\to R^-$, the upper estimate \eqref{1prime_n_upper} would also bound $\varphi^\prime$, contradicting \eqref{finite_radius_alternative}. Hence $\lim\limits_{r\to R^-}\varphi(r)=+\infty$.
Since $q<2-\frac{1}{n}$, both exponents $\frac{n-1}{2n-1-qn}$ and $\frac{1}{2n-1-qn}$ are positive and finite. Define
\begin{equation*}
M_R := C_1 \max\big\{ R^{\frac{1}{2n-1}}, R^{\frac{1}{2n-1-qn}} \big\}.
\end{equation*}
Since $f, g \ge 0$, the upper estimate \eqref{1prime_n_upper} gives
\begin{equation*}
\frac{\varphi^{\prime}(r)}{\left(\int_0^{\varphi(r)} f^{\frac{n}{n-1}}(t) d t\right)^{\frac{n-1}{2n-1}} + \left(\int_0^{\varphi(r)} g^{\frac{n}{n-1}}(\tau) d \tau\right)^{\frac{n-1}{2n-1-qn}} } \le M_R, \quad \forall r \in (0, R).
\end{equation*}
Integrating from $0$ to $r$ and using $\varphi(0)=a$, we obtain
\begin{equation*}
\int_{a}^{\varphi(r)} \frac{dt}{ \left(\int_0^t f^{\frac{n}{n-1}}(s) ds\right)^{\frac{n-1}{2n-1}} + \left(\int_0^t g^{\frac{n}{n-1}}(s) ds\right)^{\frac{n-1}{2n-1-qn}} } \le M_R R.
\end{equation*}
Letting $r\to R^-$ gives
$$
\int_{a}^{+\infty} \frac{dt}{ \left(\int_0^t f^{\frac{n}{n-1}}(s) ds\right)^{\frac{n-1}{2n-1}} + \left(\int_0^t g^{\frac{n}{n-1}}(s) ds\right)^{\frac{n-1}{2n-1-qn}} } \le M_R R < +\infty,
$$
which contradicts \eqref{KO_n}.
Thus, $R=+\infty$. Since $\bar R<R$ was arbitrary, this solution satisfies
$$
\varphi\in C^1[0,+\infty)\cap C^2(0,+\infty),\quad \varphi(r)\geq a>0,\quad \varphi^\prime(r)>0,\quad \varphi^{\prime\prime}(r)\geq0,\quad r>0.
$$
Since $a>0$ was arbitrary, the sufficiency is proved.

\underline{\textit{Necessity.}} Suppose that, for some initial value $a>0$, \eqref{CP} admits a positive global solution with the properties stated in the theorem, but \eqref{KO_n} fails. Then there exists $A_0>0$ such that \begin{equation}\label{contra_hyp_n}
\int_{A_0}^{+\infty} \frac{dt}{ \left(\int_0^t f^{\frac{n}{n-1}}(s) ds\right)^{\frac{n-1}{2n-1}} + \left(\int_0^t g^{\frac{n}{n-1}}(s) ds\right)^{\frac{n-1}{2n-1-qn}} } < +\infty.
\end{equation}
Fix $r_0>0$. Since $\varphi^{\prime\prime}\geq0$ and $\varphi^\prime(r_0)>0$,
$$
\varphi(r)\geq\varphi(r_0)+\varphi^\prime(r_0)(r-r_0)\to+\infty\quad\text{as }r\to+\infty.
$$
We may therefore choose $r_1>r_0$ such that $A:=\varphi(r_1)>\max\{a,A_0\}$.
Define
\begin{equation*}
M_{r_1} := C_2 \min\big\{ r_1^{\frac{1}{2n-1}}, r_1^{\frac{1}{2n-1-qn}} \big\} > 0.
\end{equation*}
Integrating \eqref{1prime_n_lower} from $r_1$ to $r$, we deduce
\begin{equation*}
\int_{A}^{\varphi(r)} \frac{dt}{ \left(\int_{a}^t f^{\frac{n}{n-1}}(s) ds\right)^{\frac{n-1}{2n-1}} + \left(\int_{a}^t g^{\frac{n}{n-1}}(s) ds\right)^{\frac{n-1}{2n-1-qn}} } \ge M_{r_1}(r-r_1),\quad r>r_1.
\end{equation*}
For $t\geq2A$, we have $a<A\leq \frac{t}{2}$.
Since $f$ and $g$ are non-decreasing, we have
$$
\int_a^t f^{\frac{n}{n-1}}(s)ds\geq\int_{\frac{t}{2}}^t f^{\frac{n}{n-1}}(s)ds\geq\int_0^{\frac{t}{2}}f^{\frac{n}{n-1}}(s)ds,\quad t\geq 2A,
$$
and
$$
\int_a^t g^{\frac{n}{n-1}}(s)ds\geq\int_{\frac{t}{2}}^t g^{\frac{n}{n-1}}(s)ds\geq\int_0^{\frac{t}{2}}g^{\frac{n}{n-1}}(s)ds,\quad t\geq 2A.
$$
Therefore, splitting the outer integral at $2A$ and using the preceding comparisons, we obtain
\begin{equation}\label{4.10_n}
M_{r_1}(r-r_1)\leq I_2+2\int_{A}^{+\infty}\frac{dt}{ \left(\int_{0}^t f^{\frac{n}{n-1}}(s) ds\right)^{\frac{n-1}{2n-1}}+ \left(\int_{0}^t g^{\frac{n}{n-1}}(s) ds\right)^{\frac{n-1}{2n-1-qn}} },
\end{equation}where $$I_2=\int_{A}^{2A}\frac{dt}{ \left(\int_{a}^t f^{\frac{n}{n-1}}(s) ds\right)^{\frac{n-1}{2n-1}}+ \left(\int_{a}^t g^{\frac{n}{n-1}}(s) ds\right)^{\frac{n-1}{2n-1-qn}} }<+\infty.$$
Since $r$ in \eqref{4.10_n} can be taken arbitrarily large, \eqref{4.10_n} contradicts \eqref{contra_hyp_n}. Thus, the proof of necessity is complete.

The regularity properties in (i) and (ii) follow from Lemma \ref{Lem2.2}.
\end{proof}

\begin{remark}\label{Rem4.local_requirements}
Theorem \ref{Th4.2} extends the local solutions constructed in Theorem \ref{Th3.1} to global solutions under condition \eqref{KO_n}. For $0<q<1$ and $f\equiv0$, this corresponds to the immediately increasing solutions. For $q<0$ and $g(a)=0$, the positivity of the solution follows from  \eqref{Euler_lower_II_zero}. The gradient critical case $q=2-\frac{1}n$ is treated in Proposition \ref{Prop4.critical}.
\end{remark}

\subsection{Global Existence under Condition (B)}

We now allow $g$ to change sign. Replacing $g$ with $g^+$ yields upper derivative estimates of the same form as under condition (A), and the contribution of $g^-$ is incorporated through exponentially weighted Keller--Osserman integrals.

\begin{lemma}\label{Lemma4.2-2}
Assume that $f$ and $g$ satisfy condition (B) and that $0<q\leq2-\frac{1}n$. Let $R\in(0,+\infty]$ and suppose that $\varphi\in C^2[0,R)$ is a solution of \eqref{sODE} satisfying $\varphi(0)=a>0$, $\varphi^\prime(0)=0$, $\varphi^\prime(r)>0$, and $\varphi^{\prime\prime}(r)\geq0$ for all $r\in(0,R)$.
Then, for all $r\in(0,R)$,
\begin{equation}\label{1prime_n_upper_2}
\varphi^{\prime}(r) \leq
\left\{
\begin{alignedat}{2}
&C_1\biggl[r^{\frac{1}{2n-1}}\biggl(\int_a^{\varphi(r)}f^{\frac{n}{n-1}}(t)dt\biggr)^{\theta_1}+r^{\frac{1}{2n-1-qn}}\biggl(\int_a^{\varphi(r)}(g^+(t))^{\frac{n}{n-1}}dt\biggr)^{\theta_2}\biggr],&\quad&0<q<2-\tfrac1n,\\
&\biggl(\Lambda_1r^{\frac{1}{n-1}}\int_a^{\varphi(r)}e^{\Lambda_1r^{\frac{1}{n-1}}\int_t^{\varphi(r)}(g^+(s))^{\frac{n}{n-1}}ds}f^{\frac{n}{n-1}}(t)dt\biggr)^{\theta_1},&\quad&q=2-\tfrac1n.
\end{alignedat}
\right.
\end{equation}
If we further assume $g(a)\geq0$, then for all $r\in(0,R)$,
\begin{equation}\label{1prime_n_lower_2}
\varphi^{\prime}(r) \geq
\left\{
\begin{alignedat}{2}
&C_2\biggl[r^{\frac{1}{2n-1}}\biggl(\int_a^{\varphi(r)}f^{\frac{n}{n-1}}(t)dt\biggr)^{\theta_1}+r^{\frac{1}{2n-1-qn}}\biggl(\int_a^{\varphi(r)}(g^+(t))^{\frac{n}{n-1}}dt\biggr)^{\theta_2}\biggr],&\quad&0<q<2-\tfrac1n,\\
&\biggl(\Gamma_1r^{\frac{1}{n-1}}\int_{\varphi(\frac{r}{2})}^{\varphi(r)}e^{\Gamma_1r^{\frac{1}{n-1}}\int_t^{\varphi(r)}(g^+(s))^{\frac{n}{n-1}}ds}f^{\frac{n}{n-1}}(t)dt\biggr)^{\theta_1},&\quad&q=2-\tfrac1n.
\end{alignedat}
\right.
\end{equation}
Moreover, for $0<q\leq2-\frac{1}n$,
\begin{equation}\label{1prime_n_lower_uni}
\varphi^\prime(r)\geq\left(\widetilde C_1\left(\frac{1}{2}\right)^{2n-2+\frac{n}{n-1}}r^{\frac{1}{n-1}}\int_a^{\varphi(r)}e^{-\Lambda_3r^{\frac{1}{n-1}}\int_t^{\varphi(r)}P_n(\tau)d\tau}f^{\frac{n}{n-1}}(t)dt\right)^{\frac{n-1}{2n-1}},\quad r\in(0,R).
\end{equation}
If we further assume $g(t)\leq0$ for all $t\in\mathbb R$, then, for $0<q\leq2-\frac{1}n$, we have
\begin{equation}\label{1prime_n_upper_uni}
\varphi^\prime(r)\leq\left[\widetilde C_1\int_a^{\varphi(r)}(\varphi^{-1}(t))^{\frac{1}{n-1}}f^{\frac{n}{n-1}}(t)e^{-\widetilde C_1\int_t^{\varphi(r)}(\varphi^{-1}(\tau))^{\frac{1}{n-1}}P_n(\tau)d\tau}dt\right]^{\frac{n-1}{2n-1}},\quad r\in(0,R).
\end{equation}
Here, $\varphi^{-1}$ is the inverse of $\varphi$ on its range, $P_n$ is defined in \eqref{Pn}, and $\widetilde C_1=\frac{2n-1}{n-1}\left(\frac{1}{n-1}\right)^{\frac{1}{n-1}}$. The constant $\Lambda_3>0$ depends only on $n$ and $q$, and $C_1$, $C_2$, $\Gamma_1$, and $\Lambda_1$ are as in Lemma \ref{Lemma4.2}.
\end{lemma}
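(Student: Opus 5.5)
The plan is to treat the four estimates separately, each reduced to the argument of Lemma \ref{Lemma4.2}. The sign information comes from Lemma \ref{Lem2.3}~(iii), applied on $(0,R)$ as in Remark \ref{Rem2.2}. It gives $h>0$ and $g^-(\varphi)(\varphi^\prime)^q<f(\varphi)$. Throughout I write
$$
X(r):=\frac{g^-(\varphi(r))(\varphi^\prime(r))^q}{f(\varphi(r))}\in[0,1),\qquad \gamma:=\frac{2n-1}{q(n-1)}\geq\frac{n}{n-1}\quad(\text{since }q\leq 2-\tfrac1n).
$$
From the definition \eqref{Pn} one has the identity
$$
P_n(\varphi)(\varphi^\prime)^{\frac{2n-1}{n-1}}=f^{\frac{n}{n-1}}(\varphi)X^\gamma .
$$

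\textbf{Upper bound \eqref{1prime_n_upper_2}.} Since $g\leq g^+$, we have $0<h\leq f(\varphi)+g^+(\varphi)(\varphi^\prime)^q$. I then rerun Cases 2 and 4 of the upper-bound part of Lemma \ref{Lemma4.2} verbatim with $g^+$ in place of $g$. Those proofs only use $\varphi^{\prime\prime}\geq0$, the monotonicity of $\varphi^\prime$, and that $g^+$ is nonnegative and non-decreasing. The two cases are the Young-inequality absorption for $q<2-\frac1n$ and the integrating factor for $q=2-\frac1n$.

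\textbf{Lower bound \eqref{1prime_n_lower_2} when $g(a)\geq0$.} Since $\varphi\geq a$ and $g$ is non-decreasing, $g(\varphi)=g^+(\varphi)\geq0$ along the solution. So $f,g$ restricted to $[a,\varphi(r)]$ satisfy what the lower-bound part of Lemma \ref{Lemma4.2} actually uses: nonnegativity and monotonicity on the range of $\varphi$. Its proof applies unchanged. This covers both \eqref{eq4.8}--\eqref{4.8} with the halving/convexity trick, and \eqref{upper_grad}--\eqref{eq4.5} at $q=2-\frac1n$.

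\textbf{Uniform lower bound \eqref{1prime_n_lower_uni}.} This is the main step. The key elementary inequality is
$$
(1-X)^{\frac{n}{n-1}}\geq 2^{-\frac{n}{n-1}}\bigl(1-2^\gamma X^\gamma\bigr),\qquad X\in[0,1).
$$
For $X\leq\frac12$ the left side is at least $2^{-\frac{n}{n-1}}$, and for $X\geq\frac12$ the right side is $\leq0$. It yields
$$
h^{\frac{n}{n-1}}\geq 2^{-\frac{n}{n-1}}\Bigl[f^{\frac{n}{n-1}}(\varphi)-2^\gamma P_n(\varphi)(\varphi^\prime)^{\frac{2n-1}{n-1}}\Bigr].
$$
Next I multiply \eqref{eq4.exp} by $\varphi^\prime$ and set $\psi:=(\varphi^\prime)^{\frac{2n-1}{n-1}}$. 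On $[\frac r2,r]$ this gives the linear inequality
$$
\psi^\prime+\Bigl(\frac{\alpha}{s}+K r^{\frac1{n-1}}P_n(\varphi)\varphi^\prime\Bigr)\psi\geq \widetilde C_1 2^{-\frac{n}{n-1}}\Bigl(\frac r2\Bigr)^{\frac1{n-1}}f^{\frac{n}{n-1}}(\varphi)\varphi^\prime,
$$
with $K$ depending only on $n,q$, using $s\leq r$ in the absorption term. I multiply by the integrating factor $s^\alpha e^{Kr^{1/(n-1)}\int_a^{\varphi(s)}P_n}$, integrate over $[\frac r2,r]$, and substitute $t=\varphi(s)$. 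This gives the bound with $\int_{\varphi(r/2)}^{\varphi(r)}$, with $\Lambda_3:=K$.

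To pass to $\int_a^{\varphi(r)}$, I note that $t\mapsto f^{\frac{n}{n-1}}(t)e^{-\Lambda_3 r^{1/(n-1)}\int_t^{\varphi(r)}P_n}$ is non-decreasing, because $f$ is non-decreasing and $P_n\geq0$. Convexity gives $\varphi(\frac r2)-a\leq\varphi(r)-\varphi(\frac r2)$. Together these give the doubling inequality, and the powers of $2$ collect into $2^{-(2n-2+\frac{n}{n-1})}$. I expect the bookkeeping of this step to be the main obstacle: choosing the elementary inequality so that the $g^-$ contribution becomes exactly linear in $\psi$ with coefficient $P_n$.

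\textbf{Upper bound \eqref{1prime_n_upper_uni} when $g\leq0$.} Here $h=f(\varphi)(1-X)$. Since $\frac{n}{n-1}\geq1$ and $\gamma\geq1$,
$$
(1-X)^{\frac{n}{n-1}}\leq 1-X\leq 1-X^\gamma,
$$
so $h^{\frac{n}{n-1}}\leq f^{\frac{n}{n-1}}(\varphi)-P_n(\varphi)\psi$. I drop the nonnegative term $\frac{n-2}{r}(\varphi^\prime)^{\frac{n}{n-1}}$ in \eqref{eq4.exp} and multiply by $\frac{2n-1}{n-1}\varphi^\prime$. This gives
$$
\psi^\prime\leq\widetilde C_1 s^{\frac1{n-1}}\bigl[f^{\frac{n}{n-1}}(\varphi)-P_n(\varphi)\psi\bigr]\varphi^\prime.
$$
Changing variable to $t=\varphi(s)$, with $s=\varphi^{-1}(t)$ (valid since $\varphi^\prime>0$), turns this into a linear differential inequality in $t$ on $[a,\varphi(r)]$ with $\psi=0$ at $t=a$. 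The integrating factor $e^{\widetilde C_1\int_a^t(\varphi^{-1})^{1/(n-1)}P_n}$ then yields exactly \eqref{1prime_n_upper_uni} after raising to the power $\frac{n-1}{2n-1}$.
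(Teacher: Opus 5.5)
Your proposal is correct and follows the paper's proof essentially step by step. The first two bounds use the $g^+$ comparison for \eqref{1prime_n_upper_2}--\eqref{1prime_n_lower_2}. The uniform lower bound \eqref{1prime_n_lower_uni} uses the integrating-factor argument for $\psi=(\varphi^\prime)^{\frac{2n-1}{n-1}}$ on $[\frac r2,r]$, followed by the convexity doubling. The upper bound \eqref{1prime_n_upper_uni} uses the slope bound from Lemma \ref{Lem2.3}~(iii); your $X\geq X^\gamma$ is just a compact form of the paper's computation.

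The only variation is that your case-split inequality $(1-X)^{\frac{n}{n-1}}\geq2^{-\frac{n}{n-1}}(1-2^\gamma X^\gamma)$ replaces the paper's power-mean-plus-Young step. It handles $q=2-\frac1n$ uniformly and yields $\Lambda_3=\widetilde C_1 2^{\gamma-\frac{n}{n-1}}$ instead of the paper's explicit constant. This is harmless, since the statement only claims that $\Lambda_3$ depends on $n$ and $q$.
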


\begin{proof}
Since $0\leq f(\varphi)+g(\varphi)(\varphi^\prime)^q\leq f(\varphi)+g^+(\varphi)(\varphi^\prime)^q$,
the proof of \eqref{1prime_n_upper_2} is identical to that of \eqref{1prime_n_upper}, with $g$ replaced by $g^+$. If $g(a)\geq0$, then $g(\varphi(r))=g^+(\varphi(r))$ for $r>0$, and the proof of \eqref{1prime_n_lower_2} is identical to that of \eqref{1prime_n_lower}.

We first prove \eqref{1prime_n_lower_uni}. Recall that $\widetilde C_1=\frac{2n-1}{n-1}\left(\frac{1}{n-1}\right)^{\frac{1}{n-1}}$ and $\alpha = \frac{(n-2)(2n-1)}{n-1}$. Let $\psi(r) = (\varphi^{\prime}(r))^{\frac{2n-1}{n-1}}$.
Multiplying \eqref{eq4.exp} by $\frac{2n-1}{n-1}\varphi^\prime(r)$ gives
\begin{equation}\label{4.53}
\psi^\prime(r)+\frac{\alpha}{r}\psi(r)=\widetilde C_1r^{\frac{1}{n-1}}\left[f(\varphi(r))+g(\varphi(r))(\varphi^\prime(r))^q\right]^{\frac{n}{n-1}}\varphi^\prime(r).
\end{equation}
Since $f(\varphi)\leq f(\varphi)+g(\varphi)(\varphi^\prime)^q+g^-(\varphi)(\varphi^\prime)^q$, using the inequality $(A+B)^{\frac{n}{n-1}} \leq 2^{\frac{1}{n-1}} \left(A^{\frac{n}{n-1}}+B^{\frac{n}{n-1}}\right)$ for $A=f(\varphi)+g(\varphi)(\varphi^\prime)^q\geq 0$ and $B=g^-(\varphi)(\varphi^\prime)^q\ge 0$, we obtain
$$
\left[f(\varphi)+g(\varphi)(\varphi^\prime)^q\right]^{\frac{n}{n-1}}\geq2^{-\frac{1}{n-1}}f^{\frac{n}{n-1}}(\varphi)-(g^-(\varphi))^{\frac{n}{n-1}}(\varphi^\prime)^{\frac{qn}{n-1}}.
$$
For $0<q<2-\frac{1}n$, the definition of $P_n$ and Young's inequality yield
$$
(g^-(\varphi))^{\frac{n}{n-1}}(\varphi^\prime)^{\frac{qn}{n-1}}\leq\frac{qn}{2n-1}\left[2^{\frac{n}{n-1}}\frac{2n-1-qn}{2n-1}\right]^{\frac{2n-1-qn}{qn}}P_n(\varphi)\psi+2^{-\frac{n}{n-1}}f^{\frac{n}{n-1}}(\varphi).
$$
When $q=2-\frac{1}n$, the same conclusion holds with $P_n=(g^-)^{\frac{n}{n-1}}$ and with the coefficient in front of $P_n\psi$ equal to $1$. Set
\begin{equation}\label{Lambda_3}
\Lambda_3:=
\begin{cases}
\frac{qn}{n-1}\left(\frac{1}{n-1}\right)^{\frac{1}{n-1}}\left(2^{\frac{n}{n-1}}\frac{2n-1-qn}{2n-1}\right)^{\frac{2n-1-qn}{qn}},&0<q<2-\frac{1}n,\\
\frac{2n-1}{n-1}\left(\frac{1}{n-1}\right)^{\frac{1}{n-1}},&q=2-\frac{1}n.
\end{cases}
\end{equation}
Substituting into \eqref{4.53}, we obtain
$$
\psi^\prime(r)+\frac{\alpha}{r}\psi(r)+\Lambda_3r^{\frac{1}{n-1}}P_n(\varphi(r))\psi(r)\varphi^\prime(r)\geq\widetilde C_1 2^{-\frac{n}{n-1}}r^{\frac{1}{n-1}}f^{\frac{n}{n-1}}(\varphi(r))\varphi^\prime(r).
$$
Multiplying by $r^\alpha e^{\Lambda_3\int_0^r \tau^{\frac{1}{n-1}} P_n(\varphi(\tau)) \varphi^\prime(\tau) d\tau}$, we obtain
\begin{align*}
\left( r^\alpha e^{\Lambda_3\int_0^r \tau^{\frac{1}{n-1}} P_n(\varphi(\tau)) \varphi^\prime(\tau) d\tau} \psi(r) \right)^\prime
\ge \widetilde C_1 2^{-\frac{n}{n-1}}r^{2n-3} e^{\Lambda_3\int_0^r\tau^{\frac{1}{n-1}}P_n(\varphi(\tau))\varphi^\prime(\tau)d\tau}f^{\frac{n}{n-1}}(\varphi(r))\varphi^\prime(r).
\end{align*}
Integrating over $[\frac{r}{2},r]$, and using $\tau^{\frac{1}{n-1}}\leq r^{\frac{1}{n-1}}$ for $\tau\in[\frac{r}{2},r]$, we obtain
\begin{equation}\label{4.55}
\psi(r)\geq\widetilde C_1\left(\frac{1}{2}\right)^{2n-3+\frac{n}{n-1}}r^{\frac{1}{n-1}}\int_{\varphi(\frac{r}{2})}^{\varphi(r)}e^{-\Lambda_3r^{\frac{1}{n-1}}\int_t^{\varphi(r)}P_n(\tau)d\tau}f^{\frac{n}{n-1}}(t)dt.
\end{equation}
For any fixed $r>0$, define
$$
\mathcal{I}_n(t):=f^{\frac{n}{n-1}}(t)e^{-\Lambda_3r^{\frac{1}{n-1}}\int_t^{\varphi(r)}P_n(\tau)d\tau}, \quad a\leq t\leq\varphi(r).
$$
Since $f$ is non-decreasing and $P_n\geq0$, the function $\mathcal{I}_n$ is non-decreasing in $t$.
Combined with the convexity of $\varphi$, we obtain
\begin{equation*}
\int_a^{\varphi(r)} \mathcal{I}_n(t) dt = \int_a^{\varphi(\frac{r}{2})} \mathcal{I}_n(t) dt + \int_{\varphi(\frac{r}{2})}^{\varphi(r)} \mathcal{I}_n(t) dt \le 2 \int_{\varphi(\frac{r}{2})}^{\varphi(r)} \mathcal{I}_n(t) dt,\quad \forall r>0.
\end{equation*}
Substituting this into \eqref{4.55} and taking the power $\frac{n-1}{2n-1}$ proves \eqref{1prime_n_lower_uni}.

Next, we prove \eqref{1prime_n_upper_uni}. Suppose now that $g\leq0$. Then $g=-g^-$. From Lemma~\ref{Lem2.3}~(iii), we have $$
0\leq\frac{g^-(\varphi(r))(\varphi^\prime(r))^q}{f(\varphi(r))}\leq1.
$$
Using $(1-A)^{\frac{n}{n-1}}\leq1-A$ for $A\in[0,1]$, we obtain
$$
\left[f(\varphi)-g^-(\varphi)(\varphi^\prime)^q\right]^{\frac{n}{n-1}}\leq f^{\frac{n}{n-1}}(\varphi)-f^{\frac{1}{n-1}}(\varphi)g^-(\varphi)(\varphi^\prime)^q.
$$
Moreover, $\frac{2n-1}{n-1}-q>0$ for $0<q\leq2-\frac{1}n$. Hence the slope bound in Lemma \ref{Lem2.3}~(iii) implies
\begin{align*}
f^{\frac{1}{n-1}}(\varphi(r))g^-(\varphi(r))(\varphi^\prime(r))^q
&=f^{\frac{1}{n-1}}(\varphi(r))g^-(\varphi(r))\frac{(\varphi^\prime(r))^{\frac{2n-1}{n-1}}}{(\varphi^\prime(r))^{\frac{2n-1-q(n-1)}{n-1}}}\\
&\geq f^{\frac{1}{n-1}}(\varphi(r))g^-(\varphi(r))\left(\frac{g^-(\varphi(r))}{f(\varphi(r))}\right)^{\frac{2n-1-q(n-1)}{q(n-1)}}(\varphi^\prime(r))^{\frac{2n-1}{n-1}}\\
&=P_n(\varphi(r))(\varphi^\prime(r))^{\frac{2n-1}{n-1}}.
\end{align*}
Substituting these estimates into \eqref{4.53} and dropping the non-negative term $\frac{\alpha}{r}\psi$, we obtain
$$
\psi^\prime(r)+\widetilde C_1r^{\frac{1}{n-1}}P_n(\varphi(r))\psi(r)\varphi^\prime(r)\leq\widetilde C_1r^{\frac{1}{n-1}}f^{\frac{n}{n-1}}(\varphi(r))\varphi^\prime(r).
$$
Multiplying both sides by $e^{\widetilde C_1\int_0^r\tau^{\frac{1}{n-1}}P_n(\varphi(\tau))\varphi^\prime(\tau)d\tau}$ and integrating from $0$ to $r$ gives
$$
\psi(r)\leq\widetilde C_1\int_0^rs^{\frac{1}{n-1}}e^{-\widetilde C_1\int_s^r\tau^{\frac{1}{n-1}}P_n(\varphi(\tau))\varphi^\prime(\tau)d\tau}f^{\frac{n}{n-1}}(\varphi(s))\varphi^\prime(s)ds.
$$
Since $\varphi^\prime(r)>0$ for $r>0$, the function $\varphi$ is strictly increasing, and hence its inverse $\varphi^{-1}$ is well defined on the range of $\varphi$.
Thus, \begin{align*}
\psi(r)
\leq\widetilde C_1\int_a^{\varphi(r)}\left(\varphi^{-1}(t)\right)^{\frac{1}{n-1}}f^{\frac{n}{n-1}}(t)e^{-\widetilde C_1\int_t^{\varphi(r)}\left(\varphi^{-1}(\tau)\right)^{\frac{1}{n-1}}P_n(\tau)d\tau}
dt.
\end{align*}
Substituting back $\psi(r)=(\varphi^\prime(r))^{\frac{2n-1}{n-1}}$ and taking the power $\frac{n-1}{2n-1}$ on both sides yields \eqref{1prime_n_upper_uni}.
\end{proof}

\begin{theorem}[Global Existence under Condition (B)]\label{Th4.2-2}
Assume that $f$ and $g$ satisfy condition (B).
Then the following conclusions hold.
\begin{itemize}
\item[(i)] Let $0<q<2-\frac{1}n$ and assume that $\lim\limits_{t\to+\infty}g(t)>0$. For any initial value $a>0$, the Cauchy problem \eqref{CP} admits a positive global solution $\varphi \in C^2[0,+\infty)$ satisfying $\varphi^\prime(r)>0$ and $\varphi^{\prime\prime}(r)\geq0$ for all $r>0$
if and only if condition \eqref{KO_n_2} holds.

\item[(ii)] Let $0<q\leq2-\frac{1}n$ and assume that $\lim\limits_{t\to+\infty}g(t)\leq0$. For any initial value $a>0$, the Cauchy problem \eqref{CP} admits a positive global solution $\varphi\in C^2[0,+\infty)$ satisfying $\varphi^\prime(r)>0$ and $\varphi^{\prime\prime}(r)\geq0$ for all $r>0$ if and only if condition \eqref{KO_n_3} holds.
\end{itemize}
\end{theorem}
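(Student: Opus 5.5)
The plan is to copy the scheme of Theorem \ref{Th4.2}. Local existence comes from Theorem \ref{Th3.2}, and Remark \ref{Rem3.1} gives a maximal interval $[0,R)$. Throughout we use $\varphi'>0$, $\varphi''\ge0$ and $h>0$, which hold by Lemma \ref{Lem2.3} and Remark \ref{Rem2.2}. The derivative estimates of Lemma \ref{Lemma4.2-2} then convert finite-radius blow-up, or global existence, into convergence or divergence of the relevant Keller--Osserman integral.

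\emph{Part (i).} For sufficiency, suppose $R<\infty$. The upper bound \eqref{1prime_n_upper_2} shows that $\varphi'$ is bounded wherever $\varphi$ is, so \eqref{finite_radius_alternative} forces $\varphi\to+\infty$. Dividing \eqref{1prime_n_upper_2} by the denominator of \eqref{KO_n_2} and integrating over $(0,R)$ gives a finite bound $M_RR$, exactly as before, which contradicts \eqref{KO_n_2}.

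For necessity, the lower bound \eqref{1prime_n_lower_2} needs $g(a)\ge0$, which may fail. I will restart at a later point. Since $\lim g>0$ and $\varphi\to\infty$ (by convexity and $\varphi'>0$), there is $r_*$ with $g(\varphi(r))>0$ for $r\ge r_*$. I will redo the proof of \eqref{1prime_n_lower}, integrating \eqref{4.8} only over $[\frac r2,r]$ with $r\ge2r_*$. There $g=g^+$, so \eqref{lower_g_2} holds with $\int_{\varphi(r/2)}^{\varphi(r)}(g^+)^{\frac{n}{n-1}}$. By convexity this dominates half of $\int_{\varphi(r_*)}^{\varphi(r)}$, since $g^+$ vanishes below $\varphi(r_*)$ up to a constant shift. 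This gives the lower bound with $a$ replaced by $\varphi(r_*)$ for $r$ large. The $f$-part is unaffected, because \eqref{lower_f_2} only uses $h\ge0$. The halving trick of Theorem \ref{Th4.2} then converts the integrals from $\varphi(r_*)$ into integrals from $0$. Integrating in $r$ yields that the KO integral is at least $c(r-r_1)\to\infty$, so \eqref{KO_n_2} diverges.

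\emph{Part (ii), necessity.} We have $g\le0$ eventually, and $P_n\ge0$ is non-increasing by Remark \ref{Rem2.4}. Take $r\ge r_1>0$ in \eqref{1prime_n_lower_uni} and bound $r^{\frac1{n-1}}$ below by $r_1^{\frac1{n-1}}$ in front. The exponent carries $\Lambda_3 r^{\frac1{n-1}}$, which grows with $r$; this is an obstacle, since Lemma \ref{Lem2.4}(i) only permits a fixed $\lambda$. I would try to avoid it by using the \emph{upper} bound \eqref{1prime_n_upper_uni} for sufficiency and the lower bound for necessity on bounded radius ranges only. Note that when \eqref{KO_n_3} fails, \eqref{1prime_n_lower_uni} with $\lambda$ fixed on $[r_1,2r_1]$ still forces finite-radius blow-up of a comparison inequality. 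Concretely, on $[r_1,T]$ with $T$ fixed we have $\lambda\le\Lambda_3T^{\frac1{n-1}}$, so Lemma \ref{Lem2.4}(i) yields $\varphi'\ge c_T(J^-_{1,0}(\varphi))^{\theta_1}$. Integrating gives $\int_{\varphi(r_1)}^{\varphi(T)}J^{-\theta_1}\ge c_T(T-r_1)$. Since $c_T$ decays only polynomially in $T$ (by \eqref{weighted_parameter_comparison}, $c_T\sim T^{-\frac{\theta_1}{n-1}}\cdot r_1^{\theta_1/(n-1)}$), the right side is about $T^{1-\frac{1}{2n-1}}\to\infty$. Hence \eqref{KO_n_3} must diverge; the lower limit is harmless by Lemma \ref{Lem2.4}(i).

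\emph{Part (ii), sufficiency.} If $R<\infty$, then $\varphi^{-1}\le R$. Since $g\le0$ near infinity, the eventual version of \eqref{1prime_n_upper_uni} applies after restarting at a point where $g\le0$ (with a bounded contribution from the initial segment absorbed into a constant). Bounding $(\varphi^{-1})^{\frac1{n-1}}\le R^{\frac1{n-1}}$ in the prefactor requires care, because the weight uses $\varphi^{-1}$ with a minus sign in the exponent. I would bound $(\varphi^{-1}(\tau))^{\frac1{n-1}}\ge(\varphi^{-1}(t))^{\frac1{n-1}}\ge r_2^{\frac1{n-1}}$ for $t\ge\varphi(r_2)$, which produces the weight $e^{-\lambda\int_t P_n}$ with fixed $\lambda$. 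The part $t<\varphi(r_2)$ is bounded by a constant times $e^{-\lambda\int_{\varphi(r_2)}^{\varphi(r)}P_n}\cdot C\le C'J$ using $F>0$. Thus $\varphi'\le C_R(J^-_{\lambda}(\varphi))^{\theta_1}$, which also bounds $\varphi'$ where $\varphi$ is bounded. The alternative \eqref{finite_radius_alternative} then forces $\varphi\to\infty$, and integrating contradicts \eqref{KO_n_3} via Lemma \ref{Lem2.4}(i).

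I expect the main obstacle to be this handling of $r$- and $\varphi^{-1}$-dependent weights, together with the restart at a point where $g$ has a definite sign.
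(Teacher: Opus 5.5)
Your proposal is correct. Part (i) and the sufficiency half of (ii) follow the paper's lines: the upper bound \eqref{1prime_n_upper_2}, a restart at a radius beyond which $g(\varphi)>0$ combined with the convexity halving, and, for (ii), the bound $\varphi^{-1}(\tau)\ge r_0$ inside the exponent of \eqref{1prime_n_upper_uni} together with $(\varphi^{-1})^{\frac1{n-1}}\le R^{\frac1{n-1}}$ in the prefactor.

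The real difference is in the necessity of (ii). The paper keeps the prefactor $r^{\frac1{n-1}}$ of \eqref{1prime_n_lower_uni}. With $\lambda=\Lambda_3r^{\frac1{n-1}}$ and $\mu=\lambda_0$ in \eqref{weighted_parameter_comparison}, this prefactor exactly cancels the loss factor $\mu/\lambda$. The result is $\varphi'\ge C\,(J^-_{\lambda_0,a}(\varphi))^{\frac{n-1}{2n-1}}$ with $C$ independent of $r$, and the contradiction is linear in $r$. You instead discard the prefactor and freeze the parameter at $\Lambda_3T^{\frac1{n-1}}$ on $[r_1,T]$. You then pay a constant $c_T\sim T^{-\frac1{2n-1}}$. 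This is harmless, since $c_T(T-r_1)\sim T^{\frac{2n-2}{2n-1}}\to\infty$. Your version is slightly more robust: it only needs the weight parameter to satisfy $\lambda(T)^{\theta_1}=o(T)$, with no matching prefactor. The paper's version is shorter and gives a slope bound that is uniform in $r$.

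Three small corrections.

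\begin{enumerate}
\item In (i), the $f$-part of \eqref{eq4.8} does not follow from $h\ge0$ alone. Dropping the gradient term from $[f+g(\varphi')^q]^{\frac n{n-1}}$ requires $g(\varphi)\ge0$ pointwise. Your argument is valid only because you integrate over $[\frac r2,r]$ with $r\ge2r_*$, where $g(\varphi)>0$. This is exactly what the paper does for both parts.

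\item In (ii), $g$ is non-decreasing with $\lim_{t\to+\infty}g(t)\le0$, so in fact $g\le0$ on all of $\mathbb R$. Hence \eqref{1prime_n_upper_uni} applies from $r=0$ and no restart is needed. The initial segment $[a,A]$ can simply be bounded by a constant and absorbed, because $J^-_{\lambda_1,A}$ is positive and non-decreasing. Your route through $J^-_{\lambda,A}(t)\ge F(A)(t-A)e^{-\lambda\int_A^tP_n}$ also works.

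\item \eqref{1prime_n_lower_uni} bounds $\varphi'$ below in terms of $J^-_{\cdot,a}$, not the larger $J^-_{\cdot,0}$. The passage to $b=0$ must be made in the integral test via Lemma~\ref{Lem2.4}~(i), as you indicate at the end.
\end{enumerate}
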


\begin{proof}
The regularity, positivity, monotonicity, and convexity of solutions follow from Theorem \ref{Th3.2} and Lemmas \ref{Lem2.2}--\ref{Lem2.3}. Moreover, the corresponding upper estimates in Lemma \ref{Lemma4.2-2} show that if the maximal radius $R<+\infty$ and $\varphi$ remains bounded as $r\to R^-$, then $\varphi^\prime$ remains bounded as well, contradicting \eqref{finite_radius_alternative}. Consequently, if $R<+\infty$, then $\lim\limits_{r\to R^-}\varphi(r)=+\infty$.

We first prove (i). The sufficiency follows from the proof of Theorem \ref{Th4.2} using the upper estimate \eqref{1prime_n_upper_2}. Conversely, assume that such a global solution exists. Since $\varphi(r)\to+\infty$ as $r\to+\infty$, choose $r_0>0$ such that $g(\varphi(r))>0$ for $r\geq r_0$, and set $A:=\varphi(r_0)$. For $r\geq2r_0$, we have $g(\varphi(s))=g^+(\varphi(s))$ for $s\in[r/2,r]$. Repeating the integrations leading to \eqref{lower_f_2} and \eqref{lower_g_2}, and using the convexity of $\varphi$ and the monotonicity of $f$ and $g^+$, we obtain, for some $c_0>0$,
$$
\varphi^\prime(r)\geq c_0\left[\left(\int_A^{\varphi(r)}f^{\frac{n}{n-1}}(t)dt\right)^{\theta_1}+\left(\int_A^{\varphi(r)}(g^+(t))^{\frac{n}{n-1}}dt\right)^{\theta_2}\right],\quad r\geq2r_0.
$$
The same integration and comparison of finite lower limits as in the necessity proof of Theorem \ref{Th4.2} then yield \eqref{KO_n_2}. This proves (i).

We next prove (ii). The monotonicity of $g$ and $\lim\limits_{t\to+\infty}g(t)\leq0$ imply $g\leq0$ on $\mathbb R$. Thus $g^-=-g$ is nonnegative and non-increasing. Since $f$ is positive and non-decreasing on $(0,+\infty)$ and $q\leq2-\frac{1}n$, \eqref{Pn} shows that $P_n$ is also nonnegative and non-increasing. We use $J_{\lambda,b}^-$ from \eqref{weighted_J_definition} with $F=f^{\frac{n}{n-1}}$ and $P=P_n$.

\underline{\textit{Sufficiency.}} Assume that \eqref{KO_n_3} holds and, by contradiction, that the maximal radius $R$ is finite. Fix $r_0\in(0,R)$,  set  $A:=\varphi(r_0)$ and  $\lambda_1:=\widetilde C_1r_0^{\frac{1}{n-1}}>0$.
For $a\leq t\leq A$, we have $\varphi^{-1}(t)\leq r_0$, and the exponential factor in \eqref{1prime_n_upper_uni} does not exceed $1$. Hence
$$
\widetilde C_1\int_a^A(\varphi^{-1}(t))^{\frac{1}{n-1}}f^{\frac{n}{n-1}}(t)e^{-\widetilde C_1\int_t^{\varphi(r)}(\varphi^{-1}(\tau))^{\frac{1}{n-1}}P_n(\tau)d\tau}dt\leq K_0,
$$
where $K_0:=\widetilde C_1r_0^{\frac{1}{n-1}}\int_a^A f^{\frac{n}{n-1}}(t)dt$.
For $A\leq t\leq\varphi(r)$, we have $\varphi^{-1}(t)\leq R$ and $\varphi^{-1}(\tau)\geq r_0$ for $t\leq\tau\leq\varphi(r)$. Since $P_n\geq0$,
$$
\widetilde C_1\int_A^{\varphi(r)}(\varphi^{-1}(t))^{\frac{1}{n-1}}f^{\frac{n}{n-1}}(t)e^{-\widetilde C_1\int_t^{\varphi(r)}(\varphi^{-1}(\tau))^{\frac{1}{n-1}}P_n(\tau)d\tau}dt\leq\widetilde C_1R^{\frac{1}{n-1}}J_{\lambda_1,A}^-(\varphi(r)).
$$
Raising \eqref{1prime_n_upper_uni} to the power $\frac{2n-1}{n-1}$ and combining the preceding two estimates, we obtain
\begin{equation}\label{weighted_upper_with_constant}
(\varphi^\prime(r))^{\frac{2n-1}{n-1}}\leq K_0+\widetilde C_1R^{\frac{1}{n-1}}J_{\lambda_1,A}^-(\varphi(r)),\quad r_0\leq r<R.
\end{equation}
By Lemma \ref{Lem2.4}~(i), $J_{\lambda_1,A}^-(t)$ is positive and non-decreasing for $t>A$.
Fix $T>A$.
For $t\geq T$, the monotonicity of $J_{\lambda_1,A}^-$ gives $J_{\lambda_1,A}^-(t)\geq J_{\lambda_1,A}^-(T)$, and hence
$$
K_0\leq\frac{K_0}{J_{\lambda_1,A}^-(T)}J_{\lambda_1,A}^-(t).
$$
Set
$
K_1:=\frac{K_0}{J_{\lambda_1,A}^-(T)}+\widetilde C_1R^{\frac{1}{n-1}}.
$
Since $\varphi(r)\to+\infty$ as $r\to R^-$, there exists $r_T\in(r_0,R)$ such that $\varphi(r_T)=T$. It follows from \eqref{weighted_upper_with_constant} that
$$
\varphi^\prime(r)\leq K_1^{\frac{n-1}{2n-1}}\left(J_{\lambda_1,A}^-(\varphi(r))\right)^{\frac{n-1}{2n-1}},\quad r_T\leq r<R.
$$
Integrating from $r_T$ to $r$, we obtain
$$
\int_T^{\varphi(r)}\frac{dt}{\left(J_{\lambda_1,A}^-(t)\right)^{\frac{n-1}{2n-1}}}\leq K_1^{\frac{n-1}{2n-1}}(r-r_T)\leq K_1^{\frac{n-1}{2n-1}}R<+\infty.
$$
Moreover, $J_{\lambda_1,0}^-(t)\geq J_{\lambda_1,A}^-(t)$ for $t\geq T$. Letting $r\to R^-$ gives
$$
\int_T^{+\infty}\frac{dt}{\left(J_{\lambda_1,0}^-(t)\right)^{\frac{n-1}{2n-1}}}<+\infty.
$$
By \eqref{weighted_parameter_comparison}, it follows that
$$
\int_T^{+\infty}\frac{dt}{\left(J_{1,0}^-(t)\right)^{\frac{n-1}{2n-1}}}<+\infty,
$$
which contradicts \eqref{KO_n_3}. Therefore, $R=+\infty$.

\underline{\textit{Necessity.}} Assume that \eqref{CP} admits a positive global solution for some initial value $a>0$ and that \eqref{KO_n_3} fails.
Since \eqref{KO_n_3} fails, we may take $\lambda_0=1$ and choose $A_0>0$ such that
$$
\int_{A_0}^{+\infty}\frac{dt}{(J_{\lambda_0,0}^-(t))^{\frac{n-1}{2n-1}}}<+\infty.
$$
Since $\varphi$ is convex and $\varphi^\prime>0$, we have
$\varphi(r)\to+\infty$ as $r\to+\infty$.
Hence, we may choose $r_1>0$ such that $$
A:=\varphi(r_1)>\max\{a,A_0\},\quad \Lambda_3r_1^{\frac{1}{n-1}}\geq\lambda_0.
$$
For $r\geq r_1$, applying \eqref{weighted_parameter_comparison} with $\mu=\lambda_0$ and $\lambda=\Lambda_3r^{\frac{1}{n-1}}$, it gives
$$
r^{\frac{1}{n-1}}J_{\Lambda_3r^{\frac{1}{n-1}},a}^-(t)\geq\frac{\lambda_0}{\Lambda_3}J_{\lambda_0,a}^-(t),\quad t>a.
$$
Combining this inequality with \eqref{1prime_n_lower_uni}, we obtain
\begin{equation}\label{necessity_lower_J}
\varphi^\prime(r)\geq C\left(J_{\lambda_0,a}^-(\varphi(r))\right)^{\frac{n-1}{2n-1}},\quad r\geq r_1,
\end{equation}
where $C=\left[\widetilde C_1\left(\frac{1}{2}\right)^{2n-2+\frac{n}{n-1}}\frac{\lambda_0}{\Lambda_3}\right]^{\frac{n-1}{2n-1}}>0$.
Integrating \eqref{necessity_lower_J} from $r_1$ to $r$, we obtain
\begin{equation}\label{necessity_integral_J}
C(r-r_1)\leq\int_A^{\varphi(r)}\frac{dt}{\left(J_{\lambda_0,a}^-(t)\right)^{\frac{n-1}{2n-1}}}\leq\int_A^{+\infty}\frac{dt}{\left(J_{\lambda_0,a}^-(t)\right)^{\frac{n-1}{2n-1}}}.
\end{equation}
We claim that the last integral is finite.
Since $A>a$ and $f>0$ on $(0,+\infty)$, we have $J_{\lambda_0,a}^-(A)>0$. The continuity of $J_{\lambda_0,a}^-$ gives
$$
\int_A^{2A}\frac{dt}{\left(J_{\lambda_0,a}^-(t)\right)^{\frac{n-1}{2n-1}}}<+\infty.
$$
For $t\geq2A$, we have $a<A\leq\frac{t}{2}$. Moreover, for $0\leq s\leq\frac{t}{2}$, the monotonicity of $f$ and $P_n$ gives
\begin{align*}
J_{\lambda_0,a}^-(t)\geq\int_{\frac{t}{2}}^t f^{\frac{n}{n-1}}(s)e^{-\lambda_0\int_s^tP_n(\tau)d\tau}ds
\geq\int_0^{\frac{t}{2}}f^{\frac{n}{n-1}}(s)e^{-\lambda_0\int_s^{\frac{t}{2}}P_n(\tau)d\tau}ds=J_{\lambda_0,0}^-\left(\frac{t}{2}\right).
\end{align*}
Therefore,
\begin{align*}
\int_{2A}^{+\infty}\frac{dt}{\left(J_{\lambda_0,a}^-(t)\right)^{\frac{n-1}{2n-1}}}
\leq\int_{2A}^{+\infty}\frac{dt}{\left(J_{\lambda_0,0}^-\left(\frac{t}2\right)\right)^{\frac{n-1}{2n-1}}}
=2\int_A^{+\infty}\frac{ds}{\left(J_{\lambda_0,0}^-(s)\right)^{\frac{n-1}{2n-1}}}<+\infty,
\end{align*}
where the last inequality follows from $A>A_0$. Combining the estimates over $[A,2A]$ and $[2A,+\infty)$, we conclude that
$$
\int_A^{+\infty}\frac{dt}{\left(J_{\lambda_0,a}^-(t)\right)^{\frac{n-1}{2n-1}}}<+\infty.
$$
Thus, the right-hand side of \eqref{necessity_integral_J} is bounded independently of $r$, whereas its left-hand side tends to $+\infty$ as $r\to+\infty$. This contradiction proves \eqref{KO_n_3} and completes the proof.
\end{proof}

We conclude this section with the case $q=2-\frac{1}n$. When the gradient term vanishes, the equation is independent of $q$ and is governed by \eqref{KO_n_critical_zero}. Eventual positivity of the gradient coefficient instead yields finite-radius blow-up.

\begin{proposition}\label{Prop4.critical}
Let $q=2-\frac{1}n$. Then the following assertions hold.
\begin{enumerate}
\item[(i)] Assume that condition (A) or (B) holds and $g\equiv0$ on $(0,+\infty)$. For any initial value $a>0$, problem \eqref{CP} admits a positive global solution $\varphi\in C^2[0,+\infty)$ satisfying $\varphi^\prime(r)>0$ and $\varphi^{\prime\prime}(r)\geq0$ for all $r>0$ if and only if \eqref{KO_n_critical_zero} holds.

\item[(ii)] Assume either that condition (A) holds, $f(t)>0$ for all $t>0$, and $g\not\equiv0$ on $(0,+\infty)$, or that condition (B) holds and $\lim\limits_{t\to+\infty}g(t)>0$. Then, for any initial value $a>0$, problem \eqref{CP} admits no positive global solution. Moreover, it admits a positive solution $\varphi\in C^2[0,R)$ with a finite maximal radius $R>0$ such that
$\varphi^\prime(r)>0$, $\varphi^{\prime\prime}(r)\geq0$ for $r\in(0,R)$, and $\lim\limits_{r\to R^-}\varphi(r)=+\infty$.
\end{enumerate}
\end{proposition}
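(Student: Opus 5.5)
For (i), I will observe that when $g\equiv0$ on $(0,+\infty)$, every positive increasing solution has $h(r)=f(\varphi(r))$, since $\varphi\geq a>0$. The gradient term then disappears and $q$ plays no role. Local existence comes from Theorem \ref{Th3.1} under (A), where $f(t)>0$ for $t>0$ because $f+g>0$. Under (B) it comes from Theorem \ref{Th3.2}. In both cases the local solution is $C^2$ with $\varphi^\prime>0$ and $\varphi^{\prime\prime}\geq0$. The estimates of Lemma \ref{Lemma4.2} in the case $q=2-\frac1n$ reduce, since the exponential weights equal $1$, to
$$\varphi^\prime(r)\leq\Bigl(\Lambda_1r^{\frac1{n-1}}\int_a^{\varphi(r)}f^{\frac n{n-1}}\Bigr)^{\theta_1},\qquad \varphi^\prime(r)\geq\Bigl(\Gamma_1r^{\frac1{n-1}}\int_{\varphi(r/2)}^{\varphi(r)}f^{\frac n{n-1}}\Bigr)^{\theta_1}.$$
The convexity argument of Lemma \ref{Lemma4.2} lets me replace $\int_{\varphi(r/2)}^{\varphi(r)}$ by $\frac12\int_a^{\varphi(r)}$. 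With these bounds, the sufficiency and necessity arguments of Theorem \ref{Th4.2} go through verbatim with only the $f$-term: blow-up at a finite $R$ contradicts \eqref{KO_n_critical_zero} via the upper bound, and a global solution forces \eqref{KO_n_critical_zero} via the lower bound together with the $t/2$ comparison. Remark \ref{Rem3.1} supplies the alternative \eqref{finite_radius_alternative}, and the upper bound rules out the case where $\varphi^\prime$ blows up while $\varphi$ stays bounded.

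For (ii), I first get a local solution from Theorem \ref{Th3.1} or Theorem \ref{Th3.2}. I then take its maximal continuation $[0,R)$ as in Remark \ref{Rem3.1}, which is $C^2$ by Lemma \ref{Lem2.2}~(i). The upper estimate \eqref{1prime_n_upper} or \eqref{1prime_n_upper_2} at $q=2-\frac1n$ is finite whenever $\varphi$ stays bounded on a bounded $r$-interval. So if $R<\infty$, the conclusion $\varphi\to+\infty$ follows exactly as in Theorem \ref{Th4.2-2}. It remains to show $R<\infty$. Suppose instead that $R=+\infty$. Then convexity gives $\varphi\to+\infty$. Under either hypothesis I can pick $T>a$ with $G:=g^+(T)^{\frac n{n-1}}>0$, using monotonicity under (A) and the positive limit under (B). I also choose $r_0$ with $\varphi(r_0)\geq 2T$, after which $g(\varphi(s))\geq g(T)>0$ for $s\geq r_0$.

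The main step is a lower bound that grows exponentially, which I will obtain by redoing Case 2 of the lower estimate in Lemma \ref{Lemma4.2} on $[r_0,r]$. For $r\geq 2r_0$, the inequality \eqref{eq4.5} holds with $g^+$ on $[r/2,r]$; under (B) the sign of $g$ is positive there, so the derivation is valid. It gives
$$\psi(r)\geq \Gamma_1 r^{\frac1{n-1}}\int_{\varphi(r/2)}^{\varphi(r)}e^{\Gamma_1r^{\frac1{n-1}}G(\varphi(r)-t)}f^{\frac n{n-1}}(t)\,dt.$$
I restrict $t$ to the lower half $[\varphi(r/2),\frac{\varphi(r/2)+\varphi(r)}2]$ and use $\varphi(r)-\varphi(r/2)\geq \varphi(r)/2-a$ by convexity. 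This yields $(\varphi^\prime(r))^{\frac{2n-1}{n-1}}\geq c\,e^{c\varphi(r)}$ with $c>0$ for $r$ large, because $f\geq f(a)>0$ and the interval length tends to infinity. Hence $\varphi^\prime\geq c^\prime e^{c^{\prime\prime}\varphi}$, and integrating gives $\int^{+\infty}e^{-c^{\prime\prime}t}\,dt\geq c^\prime(r-r_1)$ for every $r$, which is a contradiction. This finite-time blow-up via the critical exponential weight is the step I expect to need the most care. The subtle points are keeping the constants uniform in $r$, where the factor $r^{\frac1{n-1}}\geq r_0^{\frac1{n-1}}$ only helps, and verifying that under (B) the derivation of \eqref{eq4.5} is legitimate on $[r/2,r]$, where $g>0$. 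Once this is done, nonexistence of a global solution and the stated finite-radius behaviour follow together.
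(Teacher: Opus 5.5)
Your proof is correct. Part (i) is essentially the paper's argument. The paper invokes Lemma \ref{Lemma4.2} with $q=0$, since the equation does not depend on $q$ when $g=0$. You read off the same two-sided bounds from the case $q=2-\frac1n$ with trivial exponential weights, and the integration afterwards is the same.

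For (ii) you exclude $R=+\infty$ by a genuinely different route.

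\textbf{The paper's route.} It works pointwise. From \eqref{sODE}, $f\ge0$ and $g(\varphi)\ge m>0$ it obtains the Riccati inequality $\varphi''+\frac{n-2}{r}\varphi'\ge Kr^{\frac1{n-1}}(\varphi')^2$. It absorbs the first-order term for large $r$ and integrates $(1/\varphi')'\le-\frac K2r^{\frac1{n-1}}$ to a contradiction.

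\textbf{Your route.} You pass through the integrated lower estimate of Case 2 in Lemma \ref{Lemma4.2}, extract $\varphi'\ge c'e^{c''\varphi}$, and conclude by an Osgood-type integration. This is a concrete instance of Lemma \ref{Lem2.4}~(ii): a positive exponential weight makes the critical Keller--Osserman integral finite. It also gives quantitative growth of $\varphi'$ in terms of $\varphi$. The cost is length, and the contradiction step needs $f(a)>0$. The paper's Riccati step uses only $f\ge0$ and no integral estimates.

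\textbf{A step to state explicitly.} The inequality \eqref{eq4.5} at a point $s$ relies on \eqref{upper_grad}, which integrates \eqref{eq4.div} over all of $[0,s]$. Under (B) with $g(a)<0$, $g(\varphi)$ is negative on an initial part of that interval. So ``$g>0$ on $[r/2,r]$'' does not by itself justify the derivation. What you actually need is $0\le h(s')\le h(s)$ for $s'\le s$, and this holds whenever $g(\varphi(s))\ge0$:
\begin{itemize}
\item If $g(\varphi(s'))\le0$, then $h(s')\le f(\varphi(s'))\le f(\varphi(s))\le h(s)$.
\item If $g(\varphi(s'))>0$, the monotonicity of $g$, $\varphi$ and $\varphi'$ gives $h(s')\le h(s)$ directly.
\item $h\ge0$ comes from Lemma \ref{Lem2.3}~(iii).
\end{itemize}
With this line added, your derivation on $[r/2,r]$ for $r\ge2r_0$ is valid and the rest of your argument goes through.
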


\begin{proof}
We first prove (i). In either alternative, $f(t)>0$ for $t>0$. By Theorem \ref{Th3.1} or \ref{Th3.2}, \eqref{CP} admits a positive local solution. Choose a maximal continuation on $[0,R)$. Since the equation is independent of $q$ when $g=0$, the estimates in Lemma \ref{Lemma4.2} with $q=0$ give constants $c,C>0$ depending only on $n$ such that
$$
c r^{\frac{1}{2n-1}}\left(\int_a^{\varphi(r)}f^{\frac{n}{n-1}}(s)ds\right)^{\theta_1}\leq\varphi^\prime(r)\leq C r^{\frac{1}{2n-1}}\left(\int_a^{\varphi(r)}f^{\frac{n}{n-1}}(s)ds\right)^{\theta_1},\quad r\in(0,R).
$$
Separating variables and integrating from $0$ to $r$, we obtain
\begin{equation}\label{4.25}
\frac{(2n-1)c}{2n}r^{\frac{2n}{2n-1}}\leq\int_a^{\varphi(r)}\frac{dt}{\left(\int_a^t f^{\frac{n}{n-1}}(s)ds\right)^{\theta_1}}\leq\frac{(2n-1)C}{2n}r^{\frac{2n}{2n-1}},\quad r\in(0,R).
\end{equation}
The middle integral in \eqref{4.25} is finite near $t=a$, since $f(a)>0$ and $\theta_1<1$. Moreover, $\int_a^t f^{\frac{n}{n-1}}(s)ds\to+\infty$ as $t\to+\infty$. For all sufficiently large $t$,
$$
\int_a^t f^{\frac{n}{n-1}}(s)ds\leq\int_0^t f^{\frac{n}{n-1}}(s)ds\leq2\int_a^t f^{\frac{n}{n-1}}(s)ds.
$$
Thus the divergence of the middle integral at infinity is equivalent to \eqref{KO_n_critical_zero}.

Assume that \eqref{KO_n_critical_zero} holds. If $R<+\infty$ and $\varphi$ remained bounded as $r\to R^-$, the upper derivative estimate would bound $\varphi^\prime$, contradicting \eqref{finite_radius_alternative}. Hence $\varphi(r)\to+\infty$ as $r\to R^-$. Letting $r\to R^-$ in the upper bound from \eqref{4.25} then contradicts \eqref{KO_n_critical_zero}. Therefore $R=+\infty$. Conversely, if a positive global solution exists, convexity together with $\varphi^\prime>0$ imply $\varphi(r)\to+\infty$ as $r\to+\infty$. The lower bound from \eqref{4.25} then implies \eqref{KO_n_critical_zero}.

We next prove (ii). It follows from Theorem \ref{Th3.1} or \ref{Th3.2} that \eqref{CP} admits a positive local solution for any $a>0$. Let $\varphi$ be any maximal continuation of such a solution on $[0,R)$. By Lemma \ref{Lem2.3}, we have $\varphi^\prime>0$ and $\varphi^{\prime\prime}\geq0$ on $(0,R)$.

Suppose that $R=+\infty$. Since $\varphi^\prime$ is positive and non-decreasing, $\varphi(r)\to+\infty$ as $r\to+\infty$. In either alternative, $g$ is positive for all sufficiently large arguments. Hence there exist $r_0,m>0$ such that $g(\varphi(r))\geq m$ for $r\geq r_0$. Since $f\geq0$ and $q=2-\frac{1}n$, equation \eqref{sODE} gives
$$
\varphi^{\prime\prime}(r)+\frac{n-2}{r}\varphi^\prime(r)\geq(n-1)^{-\frac{1}{n-1}}m^{\frac{n}{n-1}}r^{\frac{1}{n-1}}(\varphi^\prime(r))^2,\quad r\geq r_0.
$$
Let $K:=(n-1)^{-\frac{1}{n-1}}m^{\frac{n}{n-1}}>0$. Since $\varphi^\prime(r)\geq \varphi^\prime(r_0)>0$, we may choose $r_1\geq r_0$ such that
$$
\frac{n-2}{r}\varphi^\prime(r)\leq\frac{K}{2}r^{\frac{1}{n-1}}(\varphi^\prime(r))^2,\quad r\geq r_1.
$$
Consequently,
$$
\varphi^{\prime\prime}(r)\geq\frac{K}{2}r^{\frac{1}{n-1}}(\varphi^\prime(r))^2,\quad r\geq r_1.
$$
Integrating from $r_1$ to $r$ yields
$$
\frac{1}{\varphi^\prime(r)}\leq\frac{1}{\varphi^\prime(r_1)}-\frac{K}{2}\int_{r_1}^r s^{\frac{1}{n-1}}ds.
$$
The right-hand side becomes negative for large $r$, a contradiction. Thus $R<+\infty$.

Finally, if $\varphi$ remained bounded as $r\to R^-$, the upper estimate \eqref{1prime_n_upper} under condition (A), or \eqref{1prime_n_upper_2} under condition (B), would bound $\varphi^\prime$ on $(0,R)$. This contradicts \eqref{finite_radius_alternative}. Hence $\varphi(r)\to+\infty$ as $r\to R^-$, completing the proof of (ii).
\end{proof}

\begin{remark}\label{Rem4.critical_exponents}
Let $1<q<2-\frac{1}n$. Suppose that $f$ and $g$ satisfy condition (A), $f(t)>0$ for $t>0$, and $g\not\equiv0$ on $(0,+\infty)$. Choose $t_0>0$ such that $g(t_0)>0$. Since $g$ is non-decreasing,
$$
\int_0^t g^{\frac{n}{n-1}}(s)ds\geq g^{\frac{n}{n-1}}(t_0)(t-t_0)\geq\frac{1}{2} g^{\frac{n}{n-1}}(t_0)t,\quad t\geq2t_0.
$$
Since $\frac{n-1}{2n-1-qn}>1$, the integrand in \eqref{KO_n} is bounded above by a constant multiple of $t^{-\frac{n-1}{2n-1-qn}}$. Hence the integral in \eqref{KO_n} converges, and the required divergence condition fails.
Under condition (B) with $\lim\limits_{t\to+\infty}g(t)>0$, the same estimate applied to $g^+$ shows that \eqref{KO_n_2} fails.

For $1<q<2-\frac{1}n$, Theorems \ref{Th4.2} and \ref{Th4.2-2}, together with Remark \ref{Rem3.1}, yield increasing convex radial solutions that blow up at a finite radius. Theorems \ref{Th1.1} and \ref{Th1.2} give the corresponding nonexistence of positive nontrivial $(n-1)$-convex entire subsolutions.
For $q=2-\frac{1}n$, Proposition \ref{Prop4.critical}, Theorem \ref{Th1.1}~(iii), and Theorem \ref{Th1.2}~(iii) give the same conclusions under the corresponding hypotheses. Thus, in the range considered here, global existence requires $q\leq1$ when $g$ is eventually positive. Under condition (B) with $\lim\limits_{t\to+\infty}g(t)\leq0$, global existence is instead governed by \eqref{KO_n_3}.
\end{remark}

\section{Proofs of the Main Theorems}\label{Section5}

In this section, we compare an entire subsolution with a radial solution that blows up at a finite radius.
This local comparison gives the equivalence theorem, which combines with Theorems \ref{Th4.2}--\ref{Th4.2-2} and Proposition \ref{Prop4.critical} to prove the main results.

\subsection{Comparison and Equivalence Principles}

We begin with the comparison principle used in the equivalence argument.

\begin{lemma}[Local comparison principle]\label{Lem5.1}
Let $0<R<+\infty$, let $u$ be a positive $(n-1)$-convex entire subsolution of \eqref{eq1.1}, and let $\varphi\in C^1[0,R)\cap C^2(0,R)$ be a positive solution of \eqref{CP} satisfying $\varphi^\prime(r)>0$, $\varphi^{\prime\prime}(r)\geq0$ for $r\in(0,R)$, and $\lim\limits_{r\to R^-}\varphi(r)=+\infty$.
Set $v(x):=\varphi(|x|)$ in $B_R$. Assume that one of the following alternatives holds.
\begin{enumerate}[label=(\roman*)]
\item $q=0$, $u\in C^2(\mathbb R^n)$, and condition (A) holds;
\item $q>0$, $u\in C^2(\mathbb R^n)$, condition (A) or (B) holds, and $f(t)>0$ for all $t>0$;
\item $q<0$, condition (A) holds, and $u$ is a strong subsolution in the sense of Definition \ref{def:entire_(n-1)-convex_subsolutions};
\item $0<q<1$, $u\in C^2(\mathbb R^n)$, condition (A) holds, and $Du(0)\neq0$.
\end{enumerate}
Then
\begin{equation}\label{eq5.1}
u(x)\leq v(x),\quad x\in B_R.
\end{equation}
\end{lemma}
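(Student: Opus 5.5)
We argue by contradiction with an open-and-closed argument in a dilation/translation parameter, as announced in the introduction (feature (iv)). Since $v(x)\to+\infty$ as $|x|\to R^-$ and $u$ is continuous on $\overline{B_R}$, the difference $u-v$ attains a maximum in $B_R$. Suppose $\max_{B_R}(u-v)>0$. For a shift $z$ close to $0$ we set $v_z(x):=v(x-z)$ on $B_R(z)$ and consider $m(z):=\max_{B_R(z)}(u-v_z)$. The aim is to find a shift $z$ and a contact point $x_0$ where $u-v_z$ attains a positive maximum, $x_0\neq z$ (so $v_z$ is $C^2$ near $x_0$ and $|Dv_z(x_0)|=\varphi^\prime(|x_0-z|)>0$), and $u$ is $C^2$ near $x_0$ (automatic except in case (iii), where we also require $x_0\neq0$). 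At such a point $Du(x_0)=Dv_z(x_0)\neq0$, which removes the degeneracy of $|Du|^q$ for $q<0$ and $0<q<1$. If the maximum of $u-v$ occurs only at the origin, a small shift $z$ moves the contact point off the singular set. The hypothesis $Du(0)\ne0$ in (iv) and the condition $|Du|>0$ a.e. in (iii) are used here to rule out contact exactly at $x=z$: there $Dv_z=0$, so $Du$ would have to vanish. In case (iii) we also use that the radial barrier is only $C^1$ at its center.

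Next, we lower the barrier to the level where it first touches. Let $c:=\max(u-v_z)>0$ and put $w:=v_z+c$. Then $w\ge u$ with equality at $x_0$. Since $f,g$ are non-decreasing and $c>0$, we have $f(w)+g(w)|Dw|^q\ge f(v_z)+g(v_z)|Dv_z|^q$ wherever the latter is nonnegative. Under condition (B), $g$ may be negative, but there the inhomogeneous term is evaluated along the same gradient, so the comparison still holds pointwise as long as the gradient is fixed. Hence $w$ is a supersolution,
$$\det{}^{\frac1n}(\Delta wI-D^2w)\le f(w)+g(w)|Dw|^q,$$
near $x_0$. By Remark \ref{Rem2.2}, $w$ is strictly $(n-1)$-convex off its center. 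Near $x_0$ we linearize: $\Delta uI-D^2u\ge0$ and $\Delta wI-D^2w>0$, so the segment $t(\Delta uI-D^2u)+(1-t)(\Delta wI-D^2w)$ lies in the open cone for $t<1$. The mean value theorem for $\log\det$ on this cone then gives a linear elliptic operator $L$ with locally bounded measurable coefficients (continuous where both functions are $C^2$). Because $Dw\ne0$ near $x_0$ and $|p|^q$ is smooth there, the gradient term difference becomes a first-order term $b\cdot D(u-w)$. The zero-order difference $f(u)-f(w)$ has a favorable sign, since $u\le w$ and $f$ is monotone. We only use monotonicity, not Lipschitz continuity: write $f(u)-f(w)\le0$ and absorb it as an inequality. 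The outcome is $L(u-w)\ge0$ near $x_0$, with $u-w\le0$ and $u-w=0$ at $x_0$.

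The strong maximum principle (classical, or Bony/Aleksandrov type for the $W^{2,\delta}$ strong subsolution in case (iii)) now shows that $u\equiv w$ near $x_0$. The contact set $\{u=w\}\cap(B_R(z)\setminus\{z,0\})$ is therefore open. It is also closed in that set, and that set is connected when $n\ge2$, so $u\equiv w$ throughout it. This is impossible, because $w\to+\infty$ at $\partial B_R(z)$ while $u$ stays bounded. Hence $u\le v$ in $B_R$.

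The main obstacle is the first step: in cases (iii) and (iv), guaranteeing that a positive maximum can be realized at a point where both functions are $C^2$ with nonzero gradient. I would first try an explicit small translation of the center, exploiting that $\varphi^\prime(r)>0$ for $r>0$ while $\varphi^\prime(0)=0$. A second difficulty is handling the sign of $g$ under (B) in the linearization. This is resolved by keeping the common gradient and using Lemma \ref{Lem2.3}~(iii), which gives $h>0$ and hence keeps the determinant in the elliptic cone.
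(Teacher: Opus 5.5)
Your overall architecture matches the paper's proof. You argue by contradiction at a positive maximum of $u-v$, translate the center when the maximum sits only at the origin, and exclude contact at the center and at the origin via gradients. You then linearize, apply the strong maximum principle, and run an open--closed argument in the punctured ball against the boundary blow-up of $v$.

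There is, however, a genuine sign error in your second step. After replacing $v_z$ by $w:=v_z+c$ you have $u\le w$. Subtracting $\mathcal L[w]\le f(w)+g(w)|Dw|^q$ from the subsolution inequality gives only
\[
\sum a^{ij}D_{ij}(u-w)-g(u)\sum b^iD_i(u-w)\ \ge\ f(u)-f(w)+\bigl(g(u)-g(w)\bigr)|Dw|^q .
\]
By monotonicity and $u\le w$, the right-hand side is $\le 0$, which is the unfavorable sign. A nonpositive zero-order remainder cannot just be ``absorbed''. To use it in the strong maximum principle you would need to write it as $c(x)(u-w)$ with $c$ bounded. That requires Lipschitz control of $f$ and $g$, which you explicitly say you avoid and which the paper does not assume. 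So your conclusion $L(u-w)\ge0$ does not follow.

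The repair is exactly the paper's argument: do not compare zero-order terms at the lifted level. Since $\mathcal L$ involves only second derivatives, $\mathcal L[u]-\mathcal L[w]=\mathcal L[u]-\mathcal L[v_z]$, and $\mathcal L[v_z]=f(v_z)+g(v_z)|Dv_z|^q$ holds with equality. Because the maximum $c$ of $u-v_z$ is positive, $u>v_z$ near $x_0$, hence
\[
f(u)+g(u)|Du|^q-f(v_z)-g(v_z)|Dv_z|^q\ \ge\ g(u)\bigl(|Du|^q-|Dv_z|^q\bigr).
\]
This gives $\sum a^{ij}D_{ij}(u-v_z)-g(u)\sum b^iD_i(u-v_z)\ge0$ with no zero-order term, using only monotonicity.

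The same observation at $x_0$, where $Du=Dv_z$, gives $\mathcal L[u](x_0)\ge\mathcal L[v_z](x_0)>0$. So $u$ is strictly $(n-1)$-convex near $x_0$, and you need not linearize along a segment ending at a possibly degenerate matrix. Since contact points avoid the origin, $u\in C^2$ there and the classical strong maximum principle suffices even in case (iii).

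Two smaller points should also be made explicit:
\begin{enumerate}
\item Contact at the origin after translation is excluded because $Du(0)=Dv(0)=0$ (the unshifted maximum sits at $0$), while $Dv_z(0)=-\varphi^\prime(|z|)z/|z|\neq0$.
\item In case (ii) with $0<q<1$, a point $z$ near $0$ with $Du(z)\neq0$ exists because $f>0$ prevents $u$ from being constant near the origin.
\end{enumerate}
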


\begin{proof}
Let $\mathcal L[w]:=\operatorname{det}^{\frac{1}n}\bigl(\Delta wI-D^2w\bigr)$.
In every case, $u$ is continuous in $\mathbb R^n$ and bounded on $\overline{B_R}$. Moreover, $v$ is continuous in $B_R$, $Dv(0)=0$, and
$$
\lim\limits_{|x|\to R^-}v(x)=+\infty.
$$
Suppose, by contradiction, that $u>v$ at some point of $B_R$. Then $u-v$ attains a positive maximum in $B_R$. Set $m:=\max\limits_{B_R}(u-v)>0$ and
$$
\mathcal M:=\left\{x\in B_R:(u-v)(x)=m\right\}.
$$

\textit{Case 1: $\mathcal M\cap(B_R\setminus\{0\})\neq\varnothing$.} Fix $x_0\in\mathcal M\cap(B_R\setminus\{0\})$. Since $u-v$ attains its maximum at $x_0$,
$$
Du(x_0)=Dv(x_0)=:p_0.
$$
The assumptions $x_0\neq0$ and $\varphi^\prime>0$ give
$$
|p_0|=|Dv(x_0)|=\varphi^\prime(|x_0|)>0.
$$
In the singular case, $u\in C^2$ and $|Du|>0$ in a neighborhood of $x_0$, and the subsolution inequality holds pointwise.
By Lemmas \ref{Lem2.1}--\ref{Lem2.3} and Remark \ref{Rem2.2}, the radial function $v$ is strictly $(n-1)$-convex in $B_R\setminus\{0\}$ under the corresponding assumptions. Since $u$ is a subsolution, $v$ is a solution, $u(x_0)>v(x_0)$, and $f$ and $g$ are non-decreasing, we obtain
\begin{align*}
\mathcal L[u](x_0)&\geq f(u(x_0))+g(u(x_0))|Du(x_0)|^q\\
&\geq f(v(x_0))+g(v(x_0))|Dv(x_0)|^q\\
&=\mathcal L[v](x_0)>0.
\end{align*}
Thus, both $u$ and $v$ are strictly $(n-1)$-convex at $x_0$.
There exists $\rho_1>0$ such that $\overline{B_{\rho_1}(x_0)}\subset B_R\setminus\{0\}$, $u>v$ in $\overline{B_{\rho_1}(x_0)}$, and both functions are strictly $(n-1)$-convex there. For $\theta\in[0,1]$, set
$$
u_\theta:=\theta u+(1-\theta)v.
$$
Then each $u_\theta$ is strictly $(n-1)$-convex in $\overline{B_{\rho_1}(x_0)}$. By the continuity of $Du$ and $Dv$, there exists $\rho_0\in(0,\rho_1]$ such that
$$
|Du(x)-p_0|\leq\frac{|p_0|}{4},\quad |Dv(x)-p_0|\leq\frac{|p_0|}{4},\quad x\in\overline{B_{\rho_0}(x_0)}.
$$
Consequently, for $x\in\overline{B_{\rho_0}(x_0)}$ and $\theta\in[0,1]$,
\begin{equation}\label{interpolating_gradient}
|Du_\theta(x)|\geq |p_0|-\theta|Du(x)-p_0|-(1-\theta)|Dv(x)-p_0|\geq\frac{|p_0|}{2}>0.
\end{equation}

Let $w:=u-v$. By the fundamental theorem of calculus,
$$
\mathcal L[u]-\mathcal L[v]=\sum_{i,j=1}^na^{ij}D_{ij}w,
$$
where
$$
a^{ij}(x):= \frac{1}n\int_0^1
\det(M_\theta
)^{\frac{1}n}\left[\operatorname{tr}( M_\theta^{-1}) \delta_{ij}-(M_\theta^{-1})_{ij}\right]d\theta,\quad M_\theta:=\Delta u_\theta I-D^2u_\theta.
$$
The eigenvalues of $\operatorname{tr}(M_\theta^{-1})I-M_\theta^{-1}$ are sums of $n-1$ positive eigenvalues of $M_\theta^{-1}$. By compactness of $\overline{B_{\rho_0}(x_0)}\times[0,1]$, $(a^{ij})$ is uniformly positive definite on this ball. If $q\neq0$, then
$$
|Du|^q-|Dv|^q=\sum_{i=1}^nb^iD_iw,
$$
where
$$
b^i(x):=q\int_0^1|Du_\theta|^{q-2}D_iu_\theta d\theta.
$$
The coefficients $b^i$ are bounded in $B_{\rho_0}(x_0)$ by \eqref{interpolating_gradient}. When $q=0$, set $b^i:=0$.
Since $u$ is a subsolution and $v$ is a solution, the monotonicity of $f$ and $g$ gives
\begin{align*}
\sum_{i,j=1}^na^{ij}D_{ij}w-g(u)\sum_{i=1}^nb^iD_iw&\geq f(u)-f(v)+\bigl(g(u)-g(v)\bigr)|Dv|^q\\
&\geq0\quad\text{in }B_{\rho_0}(x_0).
\end{align*}
Since $w$ attains its maximum at $x_0$, the strong maximum principle \cite[Theorem 3.5]{GT1988} yields
$$
w\equiv m\quad\text{in }B_{\rho_0}(x_0).
$$
Therefore, $B_{\rho_0}(x_0)\subset\mathcal M$, and hence $\mathcal M\cap(B_R\setminus\{0\})$ is open in $B_R\setminus\{0\}$.
Since $u-v$ is continuous, $\mathcal M\cap(B_R\setminus\{0\})$ is relatively closed in $B_R\setminus\{0\}$.
Since $B_R\setminus\{0\}$ is connected for $n\geq2$,
$$
\mathcal M\cap(B_R\setminus\{0\})=B_R\setminus\{0\}.
$$
This contradicts
$$
\lim\limits_{|x|\to R^-}\bigl(u(x)-v(x)\bigr)=-\infty.
$$
Hence, a positive maximum of $u-v$ cannot be attained in $B_R\setminus\{0\}$.

\textit{Case 2: $\mathcal M=\{0\}$.} Since $u-v$ attains a local maximum at the origin and $Dv(0)=0$, we have
$$
Du(0)=Dv(0)=0.
$$
This immediately contradicts the assumption $Du(0)\neq0$ in case (iv).
Hence, it remains to consider cases (i)--(iii).
Suppose first that $q=0$ or $q\geq1$. Lemma \ref{Lem2.2}~(i)--(ii) and Lemma \ref{Lem2.1} give $v\in C^2(B_R)$ in these cases. If $q=0$, condition (A) gives
$$
\mathcal L[v](0)=f(v(0))+g(v(0))>0.
$$
If $q\geq1$, the assumption $f(t)>0$ for all $t>0$ gives
$$
\mathcal L[v](0)=f(v(0))>0.
$$
Since $u(0)>v(0)$, the subsolution inequality and the monotonicity of $f$ and $g$ imply that
$$\mathcal L[u](0)\geq\mathcal L[v](0)>0.$$
By continuity, there exists $\rho_*\in(0,R)$ such that $u>v$ and both functions are strictly $(n-1)$-convex in $B_{\rho_*}$.
When $q=0$, the gradient difference is absent. When $q\geq1$, the map $p\mapsto|p|^q$ is locally Lipschitz.
The same second-order linearization and the strong maximum principle give
$$
u-v\equiv m\quad\text{in }B_{\rho_*},
$$
which contradicts $\mathcal M=\{0\}$.

It remains to consider $0<q<1$ in case (ii) and $q<0$ in case (iii). In both cases, $u,v\in C^1(B_R)$ and $Du(0)=Dv(0)=0$. When $q<0$, the definition of a strong subsolution gives $|Du|>0$ almost everywhere in $\mathbb R^n\setminus\{0\}$. When $0<q<1$, if $Du$ vanished in a neighborhood of the origin, then $u$ would be constant there, and the subsolution inequality would give
$$
0=\mathcal L[u]\geq f(u)>0,
$$
a contradiction. Therefore, in either case, we may choose $\xi\in\mathbb R^n\setminus\{0\}$ sufficiently close to the origin such that $Du(\xi)\neq0$ and $u(0)-\varphi(|\xi|)>0$.
Define
\begin{equation}\label{translation}
v_\xi(x):=\varphi(|x-\xi|),\quad x\in B_R(\xi).
\end{equation}
The function $u-v_\xi$ attains a positive maximum in $B_R(\xi)$. Let $x_\xi$ be a maximum point. Then
$$
x_\xi\neq0,\quad x_\xi\neq\xi.
$$
Indeed, if $x_\xi=0$, then
$$
Du(0)=Dv_\xi(0)=-\varphi^\prime(|\xi|)\frac{\xi}{|\xi|}\neq0,
$$
contradicting $Du(0)=0$. If $x_\xi=\xi$, then $Du(\xi)=Dv_\xi(\xi)=0$, contradicting the choice of $\xi$. Thus, $x_\xi\in B_R(\xi)\setminus\{0,\xi\}$, where both functions are of class $C^2$ and $|Dv_\xi(x_\xi)|>0$. Repeating the open-and-closed argument of Case 1 in the connected domain $B_R(\xi)\setminus\{0,\xi\}$ shows that $u-v_\xi$ is constant there, contradicting its behavior on $\partial B_R(\xi)$. Therefore, a positive maximum cannot be attained at the origin. This proves \eqref{eq5.1}.
\end{proof}

\begin{remark}\label{Rem5.1}
For $q<0$, the $C^1$ requirement in the definition of a strong subsolution supplies continuity and differentiability at the origin.
In Lemma \ref{Lem5.1}, this permits the use of the gradient identities that exclude both centers after translation.
The translation \eqref{translation} is needed when $q<0$, since the strong subsolution and the radial comparison function may fail to be of class $C^2$ at the contact point and the term $|Du|^q$ is singular where the gradient vanishes. It is also used when $0<q<1$ and $f>0$, since $p\mapsto|p|^q$ is not locally Lipschitz at $p=0$.
\end{remark}
We now obtain the equivalence between entire subsolutions and global radial solutions.

\begin{theorem}[Equivalence theorem]\label{Th5.1}
Assume that $q\leq2-\frac{1}n$ and that one of the following sets of hypotheses holds.
\begin{enumerate}[label=(\alph*)]
\item Condition (A) holds and, if $q>0$, either $f(t)>0$ for all $t>0$, or $0<q<1$ and $f\equiv0$.
\item Condition (B) holds and $q>0$.
\end{enumerate}
Then the following assertions hold.
\begin{enumerate}[label=(\roman*)]
\item If $q\geq0$, equation \eqref{eq1.1} admits a positive nontrivial $(n-1)$-convex entire subsolution $u\in C^2(\mathbb R^n)$ if and only if, for some initial value $a>0$, problem \eqref{CP} admits a positive global solution $\varphi\in C^2[0,+\infty)$ satisfying $\varphi^\prime(r)>0$ and $\varphi^{\prime\prime}(r)\geq0$ for all $r>0$.

\item If $q<0$, equation \eqref{eq1.1} admits a positive nontrivial $(n-1)$-convex strong entire subsolution
$u\in W_{\mathrm{loc}}^{2,\delta}(\mathbb R^n)$ for $\delta\in [1,\frac{n(1-q)}{1-nq})$,
if and only if, for some initial value $a>0$, problem \eqref{CP} admits a positive global solution $\varphi\in C^1[0,+\infty)\cap C^2(0,+\infty)\cap W_{\mathrm{loc}}^{2,\delta}[0,+\infty)$ for $\delta\in [1,\frac{n(1-q)}{1-nq})$ satisfying
$\varphi^\prime(r)>0$ and $\varphi^{\prime\prime}(r)\geq0$ for all $r>0$.
\end{enumerate}
\end{theorem}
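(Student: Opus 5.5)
The two directions are handled separately. Sufficiency is a direct radial construction; necessity is a contradiction argument that pits the given subsolution against a radial barrier blowing up at a finite radius, using Lemma \ref{Lem5.1}.

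\emph{Sufficiency.} Let $\varphi$ be a global solution of \eqref{CP} with $\varphi^\prime>0$ and $\varphi^{\prime\prime}\geq0$ on $(0,+\infty)$, and set $u(x):=\varphi(|x|)$.
\begin{enumerate}[label=(\arabic*)]
\item \emph{Regularity.} For $q\geq0$, Lemma \ref{Lem2.1} gives $u\in C^2(\mathbb R^n)$. For $q<0$, Remark \ref{Rem2.2}, applied on every ball $B_R$, gives $u\in C^1\cap C^2(\mathbb R^n\setminus\{0\})\cap W^{2,\delta}_{\mathrm{loc}}$ with $|Du|=\varphi^\prime(|x|)>0$ off the origin.
\item \emph{Admissibility.} By \eqref{2.1} and \eqref{radial_admissibility}, $u$ is $(n-1)$-convex. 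By Remark \ref{Rem2.2}, $h>0$, so \eqref{CP} reduces to \eqref{sODE}. Hence $u$ solves \eqref{eq1.1}, with equality, away from the origin.
\item \emph{The origin.} For $q\geq0$, equality at the origin follows from Lemma \ref{Lem2.2} by continuity. In the degenerate case $f\equiv0$ the value there is $0=g(a)\cdot0^q$. For $q<0$ the origin is excluded by Definition \ref{def:entire_(n-1)-convex_subsolutions}.
\item \emph{Positivity and nontriviality.} $u\geq a>0$, and $u$ is nonconstant because $\varphi^\prime>0$.
\end{enumerate}

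\emph{Necessity.} Suppose $u$ is a positive nontrivial entire subsolution but no global solution exists for any $a>0$.
\begin{enumerate}[label=(\arabic*)]
\item \emph{A finite-radius barrier.} For each $a>0$, Theorems \ref{Th3.1} and \ref{Th3.2} give a local solution. Its maximal continuation has $R_a<\infty$, and by Remark \ref{Rem3.1} either $\varphi$ or $\varphi^\prime$ blows up at $R_a$. The upper bounds \eqref{1prime_n_upper} and \eqref{1prime_n_upper_2} (and the $g\leq0$ bound \eqref{1prime_n_upper_uni}) bound $\varphi^\prime$ whenever $\varphi$ is bounded, so $\varphi\to+\infty$ as $r\to R_a^-$. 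This is exactly as in the proof of Theorem \ref{Th4.2-2}.
\item \emph{Choice of centre and comparison.} Pick a point $x_0$ and let $a:=u(x_0)/2$, or any value smaller than $u(x_0)$. Translate so that $x_0$ becomes the origin; translation preserves the subsolution property. Apply Lemma \ref{Lem5.1} to $v=\varphi(|x|)$ on $B_{R_a}$. This gives $u(0)\leq v(0)=a<u(0)$, a contradiction.
\item \emph{Hypothesis (iv) of Lemma \ref{Lem5.1}.} In case (iv), $0<q<1$ with $f\equiv0$, the lemma needs $Du(0)\neq0$. Since $u$ is nonconstant, some point has $Du\neq0$; choose $x_0$ there. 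This is where nontriviality is essential, because constants solve the equation in that case.
\item \emph{Other cases.} The remaining cases (i)–(iii) of Lemma \ref{Lem5.1} match hypotheses (a)/(b). For $q<0$, the strong-subsolution class is exactly the one the lemma requires.
\end{enumerate}
The argument thus shows that global solvability holds for some $a$, indeed for every $a<\sup u$, which is the stated necessity.

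\emph{Expected obstacle.} The delicate step is verifying the hypotheses of Lemma \ref{Lem5.1} uniformly across the cases.
\begin{enumerate}[label=(\arabic*)]
\item \emph{Blow-up of $\varphi$ itself.} We need $\varphi\to\infty$ rather than only $\varphi^\prime\to\infty$. Under (B) with $g\leq0$ eventually but not everywhere, the previous theorems argued via \eqref{1prime_n_upper_2}. Bounding $\varphi^\prime$ by $f$ and $g^+$ on a bounded range of $\varphi$ suffices there, since $0\leq h\leq f+g^+(\varphi^\prime)^q$ and the Young's inequality argument of Lemma \ref{Lemma4.2} applies.
\item \emph{Reduction to $h>0$.} We need the reduction of \eqref{CP} to \eqref{sODE}, which comes from Remark \ref{Rem2.2}.
\end{enumerate}
I would verify these two points first. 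The rest is bookkeeping of regularity classes, using Lemma \ref{Lem2.2} and Remark \ref{Rem2.2} for the $W^{2,\delta}$ statement.
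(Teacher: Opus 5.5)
Your proposal is correct and follows the paper's proof essentially step for step. For sufficiency you take $u(x)=\varphi(|x|)$ and invoke Lemma~\ref{Lem2.1} and Remark~\ref{Rem2.2}. For necessity you fix $a=u(0)/2$, use the upper derivative estimates of Lemmas~\ref{Lemma4.2} and \ref{Lemma4.2-2} with \eqref{finite_radius_alternative} to force $\varphi\to+\infty$ at a finite maximal radius, and contradict Lemma~\ref{Lem5.1} at the centre, translating to a point with $Du\neq0$ only in the degenerate case $f\equiv0$, $0<q<1$.

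One small caution: outside that degenerate case, keep the centre at the origin, as the paper does. For $q<0$ a translate of a strong subsolution may fail to be $C^2$ at a point other than $0$. It then leaves the class of Definition~\ref{def:entire_(n-1)-convex_subsolutions}, which is what Lemma~\ref{Lem5.1}(iii) requires. This also affects your aside about every $a<\sup u$, which the statement does not need anyway.
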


\begin{proof}
We first prove sufficiency. Let $q\geq0$, and let $\varphi$ be a positive global solution as in (i). Set $u(x):=\varphi(|x|)$. Lemma \ref{Lem2.1} and Remark \ref{Rem2.2} show that $u\in C^2(\mathbb R^n)$ is a positive $(n-1)$-convex entire solution of \eqref{eq1.1}. Since $\varphi^\prime>0$ on $(0,+\infty)$, the solution is nontrivial.

Let $q<0$, and let $\varphi$ be a positive global solution as in (ii). Set $u(x):=\varphi(|x|)$. Lemma \ref{Lem2.1} and Remark \ref{Rem2.2} show that $u$ is a positive nontrivial $(n-1)$-convex strong entire solution of \eqref{eq1.1} satisfying
$u\in W_{\mathrm{loc}}^{2,\delta}(\mathbb R^n)$ for $\delta\in [1,\frac{n(1-q)}{1-nq})$.
This proves sufficiency.

We next prove necessity. Let $u$ be a positive nontrivial $(n-1)$-convex entire subsolution in the class stated in (i) or (ii). In the degenerate case $0<q<1$ and $f\equiv0$, choose $x_*\in\mathbb R^n$ such that $Du(x_*)\neq0$ and replace $u(x)$ by $u(x+x_*)$. This is possible since $u$ is nontrivial, and the translated function remains a subsolution. Thus, in this case, we may assume that $Du(0)\neq0$. By the regularity required in (i)--(ii), $u(0)$ is well defined and positive in both cases. Choose
$$
a:=\frac{u(0)}{2}>0.
$$
Theorem \ref{Th3.1} under condition (A), or Theorem \ref{Th3.2} under condition (B), shows that \eqref{CP} admits a positive local solution $\varphi$ satisfying $\varphi(0)=a$, $\varphi^\prime>0$, and $\varphi^{\prime\prime}\geq0$. Choose a maximal continuation of $\varphi$ and suppose, by contradiction, that its maximal radius $R$ is finite. If $\varphi$ stayed bounded as $r\to R^-$, the upper estimate in Lemma \ref{Lemma4.2} or \ref{Lemma4.2-2} would bound $\varphi^\prime$, contradicting \eqref{finite_radius_alternative}. Hence $\lim\limits_{r\to R^-}\varphi(r)=+\infty$.
Therefore, Lemma \ref{Lem5.1} gives
$$
u(x)\leq\varphi(|x|),\quad |x|<R.
$$
Taking $x=0$, we obtain
$$
u(0)\leq\varphi(0)=a=\frac{u(0)}{2},
$$
which is impossible. Thus, problem \eqref{CP} admits a positive global increasing convex solution for some initial value $a>0$. The origin regularity follows from Lemma \ref{Lem2.2}. This proves necessity.
\end{proof}

\subsection{Proof of Theorem \ref{Th1.1}}
We combine Theorem \ref{Th5.1} with Theorem \ref{Th4.2} and Proposition \ref{Prop4.critical}.

\begin{proof}[Proof of Theorem \ref{Th1.1}]
By Theorem \ref{Th5.1}, equation \eqref{eq1.1} admits a positive nontrivial $(n-1)$-convex entire subsolution in the stated regularity class if and only if, for some $a>0$, problem \eqref{CP} admits a positive global solution satisfying $\varphi^\prime(r)>0$ and $\varphi^{\prime\prime}(r)\geq0$ for all $r>0$.

Suppose that $q<2-\frac{1}n$. By Theorem \ref{Th4.2}, such a global solution exists if and only if \eqref{KO_n} holds. If $q\geq0$, then $\varphi\in C^2[0,+\infty)$, and the associated radial function $u(x):=\varphi(|x|)$ belongs to $C^2(\mathbb R^n)$. If $q<0$, then
$\varphi\in C^1[0,+\infty)\cap C^2(0,+\infty)\cap W_{\mathrm{loc}}^{2,\delta}[0,+\infty)$ for $\delta\in [1,\frac{n(1-q)}{1-nq})$,
and Theorem \ref{Th5.1}~(ii) gives
$u\in C^1(\mathbb R^n)\cap C^2(\mathbb R^n\setminus\{0\})\cap W_{\mathrm{loc}}^{2,\delta}(\mathbb R^n)$ for
$\delta\in [1,\frac{n(1-q)}{1-nq})$.
This proves (i) and (ii).

Let $q=2-\frac{1}n$. If $g\equiv0$ on $(0,+\infty)$, Proposition \ref{Prop4.critical}~(i) and Theorem \ref{Th5.1}~(i) give the equivalence with \eqref{KO_n_critical_zero}. If $g\not\equiv0$ on $(0,+\infty)$, Proposition \ref{Prop4.critical}~(ii) shows that \eqref{CP} admits no positive global solution. By Theorem \ref{Th5.1}~(i), \eqref{eq1.1} admits no positive nontrivial entire subsolution. This proves (iii).
\end{proof}

\subsection{Proof of Theorem \ref{Th1.2}}

Under condition (B), the sign of $g$ at infinity determines which Keller--Osserman condition governs global solvability.

\begin{proof}[Proof of Theorem \ref{Th1.2}]
Since $q>0$ and condition (B) gives $f(t)>0$ for $t>0$, positive classical subsolutions are nonconstant. Theorem \ref{Th5.1}~(i) shows that equation \eqref{eq1.1} admits a positive $(n-1)$-convex entire subsolution $u\in C^2(\mathbb R^n)$ if and only if, for some $a>0$, problem \eqref{CP} admits a positive global solution satisfying $\varphi^\prime(r)>0$ and $\varphi^{\prime\prime}(r)\geq0$ for all $r>0$.

Let $0<q<2-\frac{1}n$ and $\lim\limits_{t\to+\infty}g(t)>0$. By Theorem \ref{Th4.2-2}~(i), such a global solution exists if and only if \eqref{KO_n_2} holds. Hence, \eqref{KO_n_2} is necessary and sufficient for the existence of a positive $(n-1)$-convex entire subsolution.

Let $0<q\leq2-\frac{1}n$ and $\lim\limits_{t\to+\infty}g(t)\leq0$. By Theorem \ref{Th4.2-2}~(ii), a positive global solution exists if and only if \eqref{KO_n_3} holds. Together with Theorem \ref{Th5.1}, this proves (ii).

Finally, let $q=2-\frac{1}n$ and $\lim\limits_{t\to+\infty}g(t)>0$. Proposition \ref{Prop4.critical}~(ii) shows that \eqref{CP} admits no positive global solution. By Theorem \ref{Th5.1}~(i), equation \eqref{eq1.1} admits no positive $(n-1)$-convex entire subsolution. This proves (iii).
\end{proof}

\begin{remark}\label{Rem5.3}
The positivity requirement on $u$ and $\varphi$ can be removed if $f$ is strictly positive on $\mathbb R$. More precisely, replace conditions (A) and (B) by
\begin{itemize}
\item[$(A)'$] $f\in C(\mathbb R,(0,+\infty))$ and $g\in C(\mathbb R,[0,+\infty))$ are non-decreasing;
\item[$(B)'$] $f\in C(\mathbb R,(0,+\infty))$ and $g\in C(\mathbb R,\mathbb R)$ are non-decreasing.
\end{itemize}
Under these alternative conditions, Lemma \ref{Lem5.1} and Theorem \ref{Th5.1} remain valid without requiring $u$ or $\varphi$ to be positive, and the initial value in \eqref{CP} may be any $a\in\mathbb R$. In the necessity proof, one chooses any $a<u(0)$ in place of $a=\frac{u(0)}{2}$. The Keller--Osserman conditions are unchanged, since changing a finite lower integration limit does not affect convergence at infinity.
\end{remark}


\section{Examples and Remarks}\label{Section6}

The following examples illustrate the effects of coefficient growth and the sign of the gradient term in Theorems \ref{Th1.1} and \ref{Th1.2}. Throughout this section, an entire subsolution is understood to be nontrivial and $(n-1)$-convex. For $q\geq0$, it belongs to $C^2(\mathbb R^n)$, and for $q<0$, it is understood in the strong class $u\in W_{\mathrm{loc}}^{2,\delta}(\mathbb R^n)$ for $\delta\in[1,\frac{n(1-q)}{1-nq})$.

\subsection{Examples under Condition (A)}

\begin{example}[Power nonlinearities]\label{Ex6.power}
Let $p,m\geq0$ and $q\leq 2-\frac{1}n$. Consider
\begin{equation*}
\operatorname{det}^{\frac{1}n}(\Delta uI-D^2u)=u^p+u^m|Du|^q\quad\text{in }\mathbb R^n.
\end{equation*}
If $q<2-\frac{1}n$, an entire subsolution exists if and only if $p\leq1$ and $m\leq1-q$. If $q=2-\frac{1}n$, since $g(t)=t^m>0$ for $t>0$, no such subsolution exists.
\end{example}

\begin{example}[Pure-gradient nonlinearity]\label{Ex6.degenerate}
Let $m\geq0$ and $q<1$. Consider
\begin{equation*}
\operatorname{det}^{\frac{1}n}(\Delta uI-D^2u)=u^m|Du|^q\quad\text{in }\mathbb R^n.
\end{equation*}
A positive entire subsolution exists if and only if $m\leq1-q$. When $0<q<1$, this condition concerns immediately increasing radial solutions, and positive constants also solve the equation, independently of the condition.
\end{example}

\begin{example}[Power-logarithmic nonlinearities]\label{Ex6.powerlog}
Let $p,m\geq0$ and $q\leq 2-\frac{1}n$. Consider
\begin{equation*}
\operatorname{det}^{\frac{1}n}(\Delta uI-D^2u)=u^m(\ln(1+u))^p(1+|Du|^q)\quad\text{in }\mathbb R^n.
\end{equation*}
If $q<2-\frac{1}n$, a positive  entire subsolution exists if and only if
\begin{equation*}
\begin{cases}
m<1,\quad\text{or}\quad m=1\ \text{and}\ p\leq 2-\frac{1}n,&q\leq0,\\
m<1-q,\quad\text{or}\quad m=1-q\ \text{and}\ p\leq 2-\frac{1}n-q,&0<q<2-\frac{1}n.
\end{cases}
\end{equation*}
In particular, when $q=1$, the second condition reduces to $m=0$ and $p\leq1-\frac{1}n$. When $q=2-\frac{1}n$, no such subsolution exists.
\end{example}

\begin{example}[Exponential nonlinearities]\label{Ex6.exp}
Let $\alpha,\beta\geq0$ and $q\leq 2-\frac{1}n$. Consider
\begin{equation*}
\operatorname{det}^{\frac{1}n}(\Delta uI-D^2u)=e^{\alpha u}+e^{\beta u}|Du|^q\quad\text{in }\mathbb R^n.
\end{equation*}
If $q<2-\frac{1}n$, an entire subsolution exists if and only if $\alpha=\beta=0$ and $q\leq1$. When $q=2-\frac{1}n$, no such subsolution exists.
\end{example}

\subsection{Examples under Condition (B)}

In this subsection, $0<q\leq 2-\frac{1}n$.

\begin{example}[Sign-changing power coefficient]\label{Ex6.sign}
Let $p\geq0$, $m>0$, and $c>0$. Consider
\begin{equation*}
\operatorname{det}^{\frac{1}n}(\Delta uI-D^2u)=1+u^p+(u^m-c)|Du|^q\quad\text{in }\mathbb R^n.
\end{equation*}
If $0<q<2-\frac{1}n$, a positive entire subsolution exists if and only if $p\leq1$ and $m\leq1-q$. When $q=2-\frac{1}n$, since $g(t)=t^m-c$ is positive for all sufficiently large $t$, no such subsolution exists.
\end{example}

\begin{example}[Decaying negative power coefficient]\label{Ex6.negative}
Let $p,m\geq0$. Consider
\begin{equation*}
\operatorname{det}^{\frac{1}n}(\Delta uI-D^2u)=1+u^p-(1+u)^{-m}|Du|^q\quad\text{in }\mathbb R^n.
\end{equation*}
A positive entire subsolution exists if and only if $p\leq1$ or $p+m\leq q$. In particular, if $m=0$ and $1<p\leq q\leq2-\frac1n$, the negative gradient term permits a positive entire subsolution, although the equation with the same zero-order term and $g\equiv0$ admits none. This gives a direct example of the compensation measured by \eqref{KO_n_3}.
\end{example}

\begin{example}[Decaying negative exponential coefficients]\label{Ex6.negative_exp}
Let $\beta>0$. The following assertions hold for $0<q\leq 2-\frac{1}n$.
\begin{enumerate}[label=(\roman*)]
\item If $\alpha\geq0$, the equation
\begin{equation*}
\operatorname{det}^{\frac{1}n}(\Delta uI-D^2u)=e^{\alpha u}-e^{-\beta u}|Du|^q\quad\text{in }\mathbb R^n
\end{equation*}
admits an entire subsolution if and only if $\alpha=0$.

\item If $p\geq0$, the equation
\begin{equation*}
\operatorname{det}^{\frac{1}n}(\Delta uI-D^2u)=(1+\ln(1+u))^p-e^{-\beta u}|Du|^q\quad\text{in }\mathbb R^n
\end{equation*}
admits a positive entire subsolution for any $p\geq0$.
\end{enumerate}
\end{example}

\subsection{Further Remarks}

\begin{remark}\label{Rem6.previous_work}
The equation in \cite{JJ2026} is $\det(\Delta uI-D^2u)=\psi(x,u,Du)$, where we denote its inhomogeneous term by $\psi$. For $(n-1)$-convex solutions of \eqref{eq1.1}, the corresponding expression is
$$
\psi(x,z,\xi)=\bigl(f(z)+g(z)|\xi|^q\bigr)^n.
$$
For $q>0$ and $g(z)>0$, its leading gradient power is $nq$. Thus the power $n$ in condition $(G_2)$ of \cite{JJ2026} corresponds to $q=1$ in the present normalization. That condition is an upper growth assumption used together with other structural hypotheses to establish boundary blow-up solutions on bounded domains. For the present additive equation, our results give nonexistence of positive entire subsolutions and existence of strictly increasing convex radial solutions with finite-radius blow-up when $1<q\leq2-\frac1n$ and $g$ is eventually positive, under the stated hypotheses. For $0<q\leq1$, global existence remains determined by the corresponding Keller--Osserman conditions.

Under condition $(G_1)^-$, recalled in Table \ref{Tab6.previous_work}, Lemma 5.3 of \cite{JJ2026} constructs an entire subsolution. This construction is used in the proof of Theorem 1.4 there to establish the nonexistence of boundary blow-up solutions on bounded domains.
For the exact model $\psi(x,z,\xi)=(1+(z^+)^\mu)(1+|\xi|^\nu)$ with $\mu\geq0$ and $0\leq\nu\leq2n-1$, the present results also establish the necessity of this exponent restriction. Indeed, taking $f(t)=g(t)=(1+(t^+)^\mu)^{\frac1n}$ and $q=\frac\nu n$, we have
$$
2^{\frac1n-1}\bigl(f(z)+g(z)|\xi|^q\bigr)\leq\psi(x,z,\xi)^{\frac1n}\leq f(z)+g(z)|\xi|^q.
$$
Comparison with these additive inhomogeneous terms and Theorem \ref{Th1.1} show that the model equation admits a positive nontrivial $(n-1)$-convex entire subsolution if and only if $\mu+\nu\leq n$. Thus the range covered by $(G_1)^-$ is recovered, and nonexistence of entire subsolutions is established when $\mu+\nu>n$. This necessity applies to the specified model and does not follow merely from the failure of a one-sided upper bound for a general $\psi$. Table \ref{Tab6.previous_work} summarizes these relations.
\end{remark}

\begin{table}[htbp]
	\caption{Comparison of assumptions and conclusions in the preceding papers and Part I.}\label{Tab6.previous_work}
	\small
	\renewcommand{\arraystretch}{1.15}
	\begin{tabularx}{\textwidth}{@{}>{\raggedright\arraybackslash}p{0.15\textwidth}>{\raggedright\arraybackslash}X>{\raggedright\arraybackslash}X@{}}
		\toprule
		Reference and condition & Equation and growth assumptions & Main conclusions and relation to Part I \\
		\midrule
		Jiang-Ji-Li \cite{JJL2025}, Theorem 1.1 & $\det^{\frac1n}(\Delta uI-D^2u)=b(x)f(u)$ with $b\equiv1$; condition \eqref{KO_n_critical_zero}. & A necessary and sufficient condition for the existence of entire subsolutions, recovered in Part I when $g\equiv0$. \\
		\addlinespace
		Ji-Jiang \cite{JJ2026}, $(G_2)$ & $\psi(x,z,\xi)\leq G(z)(1+|\xi|^n)$, where $G$ is positive and smooth and satisfies $\sup_{z\leq0}e^{-\varepsilon z}G(z)<+\infty$ for some $\varepsilon>0$. & An upper growth bound forming part of hypotheses $(G_1)$--$(G_4)$ for the existence of boundary blow-up solutions on bounded domains. The gradient exponent $n$ corresponds to $q=1$ in the present normalization. \\
		\addlinespace
		Ji-Jiang \cite{JJ2026}, $(G_1)^-$ & $0\leq\psi(x,z,\xi)\leq M(1+(z^+)^\mu)(1+|\xi|^\nu)$, where $\mu,\nu\geq0$ and $\mu+\nu\leq n$. & Lemma 5.3 establishes the existence of an entire subsolution, and Theorem 1.4 proves nonexistence of boundary blow-up solutions on bounded domains. For the exact product model with $0\leq\nu\leq2n-1$, Part I additionally establishes the necessity of $\mu+\nu\leq n$ for the existence of entire subsolutions. \\
		\addlinespace
		Part I, conditions (A) and (B) & $\det^{\frac1n}(\Delta uI-D^2u)=f(u)+g(u)|Du|^q$; conditions \eqref{KO_n}--\eqref{KO_n_3}. & Necessary and sufficient conditions for the existence of entire subsolutions for $q<2-\frac1n$ under (A) and $0<q<2-\frac1n$ under (B), together with existence and nonexistence results for $q=2-\frac1n$. The results also include the existence of radial solutions with finite-radius blow-up when the corresponding global existence condition fails.\\
		\bottomrule
	\end{tabularx}
\end{table}

\begin{remark}\label{Rem6.critical}
Within the hypotheses of Theorems \ref{Th1.1} and \ref{Th1.2}, a gradient coefficient that is positive at infinity prevents the existence of an entire subsolution when $1<q\leq 2-\frac{1}n$. At the critical gradient exponent $q=2-\frac{1}n$, condition (A) permits an entire subsolution only when $g\equiv0$ on $(0,+\infty)$ and \eqref{KO_n_critical_zero} holds. Under condition (B), a nonpositive gradient coefficient can permit entire subsolutions even at $q=2-\frac{1}n$, as Example \ref{Ex6.negative} shows.
\end{remark}

\begin{remark}\label{Rem6.origin}
When $0<q<1$ and $f\equiv0$, constant solutions coexist with the immediately increasing branch. This explains the nontriviality requirement in Theorem \ref{Th1.1}. For $q<0$, the common regularity class is the strong class stated at the beginning of this section. In the singular examples under condition (A), the displayed gradient coefficients satisfy $g(a)>0$ for every $a>0$. For the corresponding constructed radial solutions, \eqref{O} therefore gives $u\in W_{\mathrm{loc}}^{2,\delta}(\mathbb R^n)$ for $1\leq\delta<n(1-\frac{1}q)$.
\end{remark}

\noindent{\bf Acknowledgements.}

J. Ji was supported by the SEU Innovation Capability Enhancement Plan for Doctoral Students (Grant No. CXJH\_SEU 24123).
F. Jiang was supported by the National Natural Science Foundation of China (Grant No. 12271093), and by the Shanghai Institute for Mathematics and Interdisciplinary Sciences (Grant No. SIMIS‑ID‑2025‑AD).
M. Li was supported by the Alexander von Humboldt Research Fellowship.
F. Jiang and M. Li were supported by the Jiangsu Provincial Scientific Research Center of Applied Mathematics (Grant No. BK20233002).

\bibliographystyle{amsplain}

\end{document}